\documentclass[11pt]{article}
\usepackage{amsmath,blkarray,amssymb,amsthm,graphicx,booktabs,multirow,rotating,tabularx}
\hfuzz=\maxdimen
\newtheorem{thm}{Theorem}[section]

\newtheorem{lem}[thm]{Lemma}

\newtheorem{cor}[thm]{Corollary}
\newtheorem{cla}[thm]{Claim}

\theoremstyle{definition}
\newtheorem{exam$s$-union ple}[thm]{Example}

\newtheorem{rmk}[thm]{Remark}

\newcommand{\GL}{{\operatorname{GL}}}
\newcommand{\GF}{{\operatorname{GF}}}

\begin{document}
	\date{}
	\title{New infinite families of $q$-analogs of group divisible designs with arbitrary block dimension\footnote{Supported by  National Natural Science Foundation of China under Grant 12571346.}}	
	\author{
		{\small Yakun  Wu,}  {\small Junling  Zhou,} {\small Xiaoran  Wang}\\
		{\small School of Mathematics and Statistics}\\ {\small Beijing Jiaotong University}\\
		{\small Beijing  100044, China}\\
		{\small jlzhou@bjtu.edu.cn}\\
	}
	\maketitle
	
	\begin{abstract}
		This paper is mainly devoted to constructions of \(q\)-analogs of group divisible designs and their applications. We give a complete description of the action of \(G=\GL(m,q^l)\) on \(\Omega_k^{k-1}\), where $3\leq k\leq \min\left\lbrace m+1,l\right\rbrace $ and \(\Omega_k^{k-1}\) is the set of \(k\)-subspaces of $\GF(q)^{ml}$ whose \(\GF(q^l)\)-span has dimension \(k-1\). We do this by relating the \(G\)-orbits on \(\Omega_k^{k-1}\) to the corresponding Singer cycle orbits on subspaces of $\GF(q)^l$. From the properties of the $G$-incidence matrix between $2$-subspaces and $k$-subspaces, we obtain plenty of new infinite families of simple \(q\)-analogs of group divisible designs with arbitrary block dimension. We further establish a recursive construction for simple \(q\)-analogs of pairwise balanced designs and then produce new infinite families of such designs. We also obtain plenty of infinite families of non-simple subspace \(2\)-designs through the above two types of designs.

		\medskip
		
		\noindent {\bf Keywords}: $q$-Analog of a group divisible design \ \ Subspace design  \ \ \(q\)-Analog of a pairwise balanced design \ \ Kramer--Mesner method
		
		\medskip
		
		\noindent{\bf 2010  MR Subject Classification}: 05B05, 51E05, 51E20
	\end{abstract}
	
	\section{Introduction}
	Throughout this paper, let $q$ be a prime power and let $V$ be a vector space of dimension $v$ over $\GF(q)$. For brevity, we speak of an $i$-dimensional subspace as an $i$-subspace. A \emph{$q$-analog of a group divisible design} (briefly, a \emph{$q$-GDD}), denoted by a $(v,g,k,\lambda)_q$-GDD, is a triple $(V,\mathcal{G},\mathcal{B})$, where $\mathcal{G}$ is a partition of the set of $1$-subspaces of $V$ into $g$-subspaces, called \emph{groups}, and $\mathcal{B}$ is a collection of $k$-subspaces of $V$, called \emph{blocks}, satisfying the following conditions:
	\begin{enumerate}
		\item[\rm(i)] $\dim_{\GF(q)}(B \cap G) \le 1$ for any $G \in \mathcal{G}$ and $B \in \mathcal{B}$;
		\item[\rm(ii)] Every $2$-subspace $T \leq V$ that is not contained in any group is contained in exactly $\lambda$ blocks of $\mathcal{B}$.
	\end{enumerate}
	According to a statement of Tits \cite{JT}, the combinatorics of sets can be regarded as the limiting case \(q \to 1\) of the combinatorics of vector spaces over \(\GF(q)\). Thus, a \(q\)-GDD can be regarded as a natural extension of a classic group divisible design to vector space over finite fields. Buratti et al. \cite{buratti} conducted a systematic study of \(q\)-GDDs, established necessary conditions for their existence, and clarified their connections with scattered subspaces, \(q\)-Steiner systems, packing designs, and \(q^r\)-divisible projective sets. They also constructed several infinite families of \(q\)-GDDs. A design is said to be \emph{simple} if the block set does not contain repeated blocks and non-simple otherwise.

	\begin{thm}\rm{\cite{Schram}}
		Let $v$ be odd and let $m$ be a divisor of $v$. Then there exists a
		$(v,m,3,q^{2}+q+1)_q$-GDD. Moreover, this $q$-GDD can be
		simple when $3 \mid m$.
	\end{thm}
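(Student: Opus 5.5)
Identify $V=\GF(q)^v$ with $\GF(q^v)$ and let the groups be the $m$-subspaces $\alpha\GF(q^m)$, $\alpha\in\GF(q^v)^*$. These form the Desarguesian $m$-spread, so every $1$-subspace lies in exactly one group, and the spread is invariant under the multiplicative group $\GF(q^v)^*$ (the Singer cycle). The plan is to take for $\mathcal{B}$ a union of orbits of $3$-subspaces under $H=\GF(q^v)^*\rtimes\mathrm{Gal}$ (or only under the Singer group), chosen so that each block meets every group in dimension at most $1$. For a $2$-subspace $T=\langle x,y\rangle$ not inside a group we have $y/x\notin\GF(q^m)$. The natural candidate blocks are $B_{x,y}=\langle x,y,y^2/x\rangle$, or equivalently, after scaling, $\langle 1,\theta,\theta^2\rangle$ with $\theta\notin\GF(q^m)$.

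To get the index $\lambda=q^2+q+1$, I would use the following counting device. Take $\mathcal{B}$ to be the collection, with multiplicity, of all subspaces $\alpha\langle 1,\theta,\theta^2\rangle$ as $\alpha$ ranges over $\GF(q^v)^*/\GF(q)^*$ and $\theta$ ranges over representatives of a suitable set $\Theta$. Fix a $2$-subspace $T$. It lies in $\alpha\langle1,\theta,\theta^2\rangle$ exactly when $\alpha^{-1}T$ is one of the $q^2+q+1$ $2$-subspaces of $\langle 1,\theta,\theta^2\rangle$. So I would aim to show that, as $\theta$ ranges over $\Theta$, each of these $2$-subspaces hits every Singer orbit of non-group $2$-subspaces exactly once, up to the stabilizer normalization. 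Since $v$ is odd, $\gcd(v,2)=1$, and the stabilizer of a $2$-subspace in the Singer group is trivial modulo $\GF(q)^*$ (a $2$-subspace fixed by a nontrivial element would be a $\GF(q^2)$-line, and $\GF(q^2)\not\subseteq\GF(q^v)$). Hence all Singer orbits of $2$-subspaces are full, of size $(q^v-1)/(q-1)$. I would then parametrize the orbit of $\langle1,\theta\rangle$ by $\theta$ modulo the fractional linear action of $\mathrm{PGL}(2,q)$ on $\theta$. Taking $\Theta$ to be all $\theta\in\GF(q^v)\setminus\GF(q^m)$, and dividing by the appropriate $\mathrm{PGL}(2,q)$-factor, each $2$-subspace should be covered $q^2+q+1$ times.

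The key steps, in order, are as follows.
\begin{enumerate}
\item Show that $\langle1,\theta,\theta^2\rangle$ meets each coset $\alpha\GF(q^m)$ in dimension at most $1$ whenever $\theta\notin\GF(q^m)$. This needs: if $a+b\theta+c\theta^2$ and $a'+b'\theta+c'\theta^2$ have ratio in $\GF(q^m)$, then $\theta$ satisfies a nontrivial polynomial of degree at most $4$ over $\GF(q^m)$. Since $[\GF(q^m)(\theta):\GF(q^m)]$ divides $v/m$, which is odd, that degree is $1$ or $3$. Degree $1$ is excluded, and degree $3$ needs a direct argument that the ratio relation forces linear dependence.
\item Do the double count: the number of pairs (block, $2$-subspace of the block) equals $\lambda$ times the number of non-group $2$-subspaces. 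Combined with transitivity-type uniformity coming from the $\mathrm{GL}(2,q)$-action on $\theta$, this forces equal coverage.
\item Check simplicity. Repeated blocks arise when $\langle1,\theta,\theta^2\rangle=\alpha\langle1,\theta',\theta'^2\rangle$ for distinct parameters. When $3\mid m$, $\GF(q^3)\subseteq\GF(q^m)$, and I would instead use blocks of the form $\alpha\GF(q^3)$-type subspaces intersected appropriately, or choose orbit representatives so that each orbit is taken once. The statement asserts only that the design \emph{can} be simple, so picking a transversal of orbits suffices.
\end{enumerate}

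The main obstacle is the uniformity in step 2. The double count alone gives only the average, so I need the $\mathrm{PGL}(2,q)$-equivariance: replacing $\theta$ by $(a\theta+b)/(c\theta+d)$ maps $\langle1,\theta,\theta^2\rangle$ to a scalar multiple of $\langle1,\theta',\theta'^2\rangle$, since the span of degree-$\le 2$ polynomials is preserved up to the factor $(c\theta+d)^{-2}$. This shows that every $2$-subspace of a block is equivalent to $\langle 1,\theta\rangle$ for some admissible $\theta$, giving exact coverage $q^2+q+1$. I would verify this equivariance first, since the rest of the argument depends on it.
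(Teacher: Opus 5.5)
The paper quotes this theorem from Schram without proof, so your argument has to stand on its own. Your construction is a sound one: the multiset of all $\alpha\langle 1,\theta,\theta^2\rangle$ with $\theta\notin\GF(q^m)$, divided by $|\mathrm{PGL}(2,q)|$, really is a $(v,m,3,q^2+q+1)_q$-GDD. Your reformulated target is also exactly what is needed: each fixed $2$-dimensional position inside $\langle1,\theta,\theta^2\rangle$ should meet every Singer orbit of non-group $2$-subspaces equally often as $\theta$ varies. The gap is in how you propose to prove that target. Reparametrizing by $\theta'=(a\theta+b)/(c\theta+d)$ gives $\langle1,\theta',\theta'^2\rangle=(c\theta+d)^{-2}\langle1,\theta,\theta^2\rangle$ and $(c\theta+d)^2\langle1,\theta'\rangle=(c\theta+d)\langle1,\theta\rangle$. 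So the $\mathrm{PGL}(2,q)$-equivariance only reaches the $q+1$ positions $\ell(\theta)\langle 1,\theta\rangle$ with $\ell$ linear; homogeneously, these are the pencils of binary quadratic forms with a common linear factor.

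The other $q^2$ positions, such as $\langle 1,\theta^2\rangle$, are never of the form $\beta\langle1,\theta'\rangle$ with $\beta\langle1,\theta',\theta'^2\rangle=\langle1,\theta,\theta^2\rangle$. That would give coprime $p,p'$ and some $p''$ in $\GF(q)[X]_{\le 2}$ with $p'(\theta)^2=p(\theta)p''(\theta)$, which is impossible once $\deg\theta\ge 5$. For a position $\langle p(\theta),p'(\theta)\rangle$ with $p,p'$ coprime, the number of $\theta$ landing in the Singer orbit of $\langle 1,\phi\rangle$ is $\sum_{\phi'}|R^{-1}(\phi')|$ over the M\"obius orbit of $\phi$. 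Here $R=p'/p$ has degree $2$, and its fibres have size $0$, $1$ or $2$; for instance, for $R=X^2$ and $q$ odd the fibre size is $1+\eta(\phi')$, with $\eta$ the quadratic character of $\GF(q^v)$. So ``equally often'' is a genuine statement that your argument does not reach. The double count only gives the average, and in general there is no transitivity to appeal to: $\mathrm{PGL}(2,q)$ is already absorbed into the Singer orbits, and the Frobenius merges at most $v$ of roughly $q^{v-3}$ orbits.

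The missing ingredient is a twist cancellation, which is where $v$ odd is used a second time. For $q$ odd, first normalize the branch points of $R$ by a M\"obius post-composition; this stays in the same pencil and preserves the orbit of $\phi$. Then $|R^{-1}(\phi')|=1+\eta(\kappa Q(\phi'))$ with $\kappa\in\GF(q)^*$ and $Q(\phi)=\phi$ or $Q(\phi)=\phi^2-\nu_0$. The map $\phi\mapsto\nu\phi$, respectively $\phi\mapsto(s\phi+t\nu_0)/(t\phi+s)$ with $s^2-\nu_0t^2$ a nonsquare, multiplies $Q$ by a nonsquare of $\GF(q)$ times a square. Since $v$ is odd, that nonsquare stays a nonsquare in $\GF(q^v)$. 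So this map permutes the orbit while flipping $\eta$, the character sum vanishes, and each position contributes exactly $|\mathrm{PGL}(2,q)|$. For $q$ even, the pencil $\langle 1,X^2\rangle$ gives a bijection. The separable pencils reduce to $y^2+y=(\phi'-\gamma_0)/\kappa$ with $\kappa,\gamma_0\in\GF(q)$. There the translation $\phi'\mapsto\phi'+\kappa\tau$, with $\operatorname{Tr}_{\GF(q)/\GF(2)}(\tau)=1$, flips the absolute trace because $\operatorname{Tr}_{\GF(q^v)/\GF(2)}(\tau)=v\operatorname{Tr}_{\GF(q)/\GF(2)}(\tau)$.

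Two smaller points. Step~1 is simpler than you make it. The relation $a_0+a_1\theta+a_2\theta^2=\gamma(b_0+b_1\theta+b_2\theta^2)$ with $\gamma\in\GF(q^m)$ is a nonzero relation of degree at most $2$ over $\GF(q^m)$. Since $[\GF(q^m)(\theta):\GF(q^m)]$ divides the odd number $v/m$, this forces $\theta\in\GF(q^m)$; there is no degree-$3$ case.

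Step~3 as written fails. When $3\mid m$ the subspaces $\alpha\GF(q^3)$ lie inside groups, so they cannot be blocks, and keeping each orbit once changes $\lambda$ unless all multiplicities are already $1$. What you need is the following, from the same degree-at-most-$4$ argument: for $\deg\theta\ge 5$ the only representations of $\alpha\langle1,\theta,\theta^2\rangle$ are the $|\mathrm{PGL}(2,q)|$ M\"obius ones. So after division, repeats arise only from $\theta\in\GF(q^3)$, and each $\GF(q^3)$-line then occurs $q^2+q+1$ times. When $3\mid m$ such $\theta$ lie in $\GF(q^m)$ and are therefore excluded.
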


	\begin{thm}\rm{\cite{buratti,WZ}}\label{kkkk}
		Let $l,m,k$ be integers with $l \ge 2$ and
		$3 \le k \le m$. Then there exists a simple $(ml,l,k,\lambda)_q$-GDD with
		\[
		\lambda
		= q^{(l-1)\left(\binom{k}{2}-1\right)}
		\prod_{i=2}^{k-1}\frac{q^{(m-i)l}-1}{q^{\,k-i}-1}.
		\]
	\end{thm}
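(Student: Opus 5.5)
Identify $V=\GF(q)^{ml}$ with $\GF(q^l)^m$. Let the groups be the $m$-dimensional-over-$\GF(q^l)$ points, i.e. the $1$-dimensional $\GF(q^l)$-subspaces $\langle x\rangle_{q^l}$. Each is an $l$-subspace over $\GF(q)$, and together they partition the $1$-subspaces of $V$; this is the Desarguesian spread. For blocks I take all $k$-subspaces $B$ over $\GF(q)$ whose $\GF(q^l)$-span has dimension exactly $k$. Call this set $\Omega_k^k$; these are the ``scattered'' $k$-subspaces.

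First I check condition (i). If $\dim_{\GF(q)}(B\cap G)\ge 2$ for some group $G$, then two $\GF(q)$-independent vectors of $B$ would be $\GF(q^l)$-dependent. Completing them to a $\GF(q)$-basis of $B$, the $\GF(q^l)$-span would then have dimension less than $k$, a contradiction. Since $k\le m$, the set $\Omega_k^k$ is nonempty. The design is simple by construction.

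For condition (ii), I use transitivity. The group $\GL(m,q^l)$ preserves the spread and $\Omega_k^k$. It also acts transitively on the $2$-subspaces $T$ not inside a group: such a $T=\langle x,y\rangle_q$ has $x,y$ independent over $\GF(q^l)$, and $\GL(m,q^l)$ is transitive on such ordered pairs. Hence $\lambda$ is constant, and it remains only to compute it.

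Fix $T=\langle e_1,e_2\rangle_q$. I count the $\GF(q)$-bases $(e_1,e_2,b_3,\dots,b_k)$ extending $T$ with $b_3,\dots,b_k$ independent over $\GF(q^l)$ together with $e_1,e_2$. Choosing $b_i$ for $i=3,\dots,k$ outside the current $(i-1)$-dimensional $\GF(q^l)$-span gives $q^{ml}-q^{(i-1)l}$ choices. I then divide by the number of ordered bases of a fixed $B\supseteq T$ that extend the basis $(e_1,e_2)$ of $T$. This number is $\prod_{i=3}^{k}(q^k-q^{i-1})$, since every $\GF(q)$-basis of $B$ is automatically $\GF(q^l)$-independent. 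So
\[
\lambda=\prod_{i=3}^{k}\frac{q^{ml}-q^{(i-1)l}}{q^k-q^{i-1}}
=\prod_{i=2}^{k-1}\frac{q^{il}(q^{(m-i)l}-1)}{q^{i}(q^{k-i}-1)}.
\]
The power of $q$ here is $\sum_{i=2}^{k-1} i(l-1)=(l-1)\bigl(\binom{k}{2}-1\bigr)$, which matches the statement.

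The only delicate point is the correct definition of the blocks and the verification of (i), which the scattered condition handles directly. The counting step is routine.
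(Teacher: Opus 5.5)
Your proof is correct and follows essentially the same route the paper relies on, via \cite{buratti,WZ} and Lemma~\ref{fi1}(ii), (iv), (vi): the Desarguesian $l$-spread is the group set, the single $\GL(m,q^l)$-orbit $\Omega_k^k$ is the block set, condition (i) is the vanishing of the $(\mathcal F_2^1,\mathcal F_k^k)$-submatrix, and $\lambda$ is the unique $(\mathcal F_2^2,\mathcal F_k^k)$-entry $R$. Your direct count of basis extensions is an unpacked form of the orbit--stabilizer computation behind that entry, and it yields the same value $\prod_{i=2}^{k-1}(q^{ml}-q^{il})/(q^k-q^i)$.
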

	
	In complete analogy with the classic situation, $q$-GDDs are closely related to $q$-analogs of $2$-designs and $q$-analogs of pairwise balanced designs. Firstly, we recall the definition of a $q$-analog of a $t$-design. We also explain its relationship with $q$-GDD and present several important examples.
	
	 A \emph{$q$-analog of a $t$-design}  with parameters $t$-$(v,k,\lambda)_q$ is a pair $\left( V,\mathcal{B}\right) $, where $\mathcal{B}$ is a collection of $k$-subspaces of $V$, called blocks, with the property that each $t$-subspace of $V$ is contained in exactly $\lambda$ blocks of $\mathcal{B}$. A $q$-analog of a $t$-design is also called a \emph{subspace $t$-design} or a \emph{$t$-design over the finite field}. These objects are natural $q$-analogs of the classic $t$-$(v,k,\lambda)$ designs. Because of their applications to error correction in network coding, the study of $t$-$(v,k,\lambda)_q$ designs has attracted considerable attention and has been extensively developed, see \cite{error,koetter}. A $t$-$(v,k,\lambda)_q$ design is said to be \emph{trivial} if its block set consists of all $k$-subspaces, each with the same multiplicity.
	
	  The first known nontrivial infinite family of subspace $2$-designs was obtained by Thomas in \cite{Thomas}. Subsequently, Suzuki \cite{Suzuki1,Suzuki2} and Schram \cite{Schram} further generalized this result from the binary field to general finite fields. Since then, the theory of subspace designs has developed considerably; see~\cite{steiner,large set,difference,Itoh,derived, Miyakawa,Wz,WZ}. In particular, a $t$-$(v,k,1)_q$ design with $t\geq 2$ is called a \emph{$q$-Steiner system} and is denoted by $\mathcal{S}_q(t,k,v)$. An important nontrivial example is the $q$-Steiner system $\mathcal{S}_2(2,3,13)$, which was constructed by Braun et al. \cite{steiner}. Of particular significance for the present work, Itoh \cite{Itoh} developed in 1998 a recursive construction which has been used to obtain new infinite families of subspace $2$-designs. Schram \cite{Schram} presented a construction of subspace $2$-designs via $q$-GDDs. By combining the constructions of Itoh and Schram, Wang and Zhou \cite{WZ} obtained plenty of new infinite families of simple subspace $2$-designs. In fact, most of the aforementioned works focus primarily on block dimension \(k=3\). When the block dimension increases, the known existence and constructions of simple subspace \(2\)-designs become much less, and even the constructions of non-simple subspace \(2\)-designs is already a difficult problem.
	
	\begin{thm}\rm{\cite{Itoh,WZ}}
		Let $q$ be a prime power and let $l,m$ be positive integers. Suppose there exists a nontrivial $2$-$(l,3,\lambda)_q$ design admitting a Singer cycle $H\leq \GL(m,q^l)$ as an automorphism group, where $\lambda= sq(q+1)(q^3-1)+tq(q^2-1)$ for some integers $s,t$ with $t\in\left\lbrace 0,1\right\rbrace $ if $3\mid l$, and $t=0$ if $3\nmid l$. Then there exists a nontrivial $2$-$(ml,3,\lambda)_q$ design admitting $\GL(m,q^l)$ as an automorphism group.
	\end{thm}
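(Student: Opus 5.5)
This is the Itoh recursion, and I would prove it by the Kramer--Mesner method for $G=\GL(m,q^l)$ acting on $V=\GF(q^l)^m\cong\GF(q)^{ml}$. Since $G$ is transitive on $2$-subspaces of $V$ that are not contained in a $1$-dimensional $\GF(q^l)$-subspace, and also on those that are, it suffices to find a union $\mathcal{B}$ of $G$-orbits of $3$-subspaces with two properties. First, $\mathcal{B}$ covers each \emph{generic} $2$-subspace (one whose $\GF(q^l)$-span has dimension $2$) exactly $\lambda$ times. Second, it covers each \emph{special} $2$-subspace (one contained in a line $\langle x\rangle_{\GF(q^l)}$) exactly $\lambda$ times.

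I would classify $3$-subspaces $B$ by $d=\dim_{\GF(q^l)}\langle B\rangle$, and use three kinds of blocks.

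\textbf{Type $d=3$.} Take the full orbit of $3$-subspaces spanning a $3$-dimensional $\GF(q^l)$-space. This needs $m\ge 3$; the cases $m\le 2$ are handled by the other two types alone. Such a $B$ contains only generic $2$-subspaces. By transitivity, each generic $T$ lies in a constant number of them. Counting gives the extensions $T+\langle w\rangle$ with $w\notin\langle T\rangle_{\GF(q^l)}$, namely
\[
\frac{q^{ml}-q^{2l}}{q^3-q^2}=q^{2l-2}\,\frac{q^{(m-2)l}-1}{q-1}.
\]

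\textbf{Type $d=1$.} Inside each line $L=\langle x\rangle_{\GF(q^l)}\cong\GF(q^l)$, place a copy of the given $2$-$(l,3,\lambda)_q$ design $\mathcal{D}$ via $y\mapsto yx$. Because $\mathcal{D}$ is invariant under the Singer cycle $H=\GF(q^l)^*$, the copy does not depend on the choice of $x$. Taking the union over all lines therefore gives a $G$-invariant family, since the stabilizer of a line acts on it through $\GF(q^l)^*$. These blocks cover each special $2$-subspace exactly $\lambda$ times and cover no generic one.

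\textbf{Type $d=2$.} These blocks contain both kinds of $2$-subspaces. A $3$-subspace $B$ with $\langle B\rangle_{\GF(q^l)}=W$ of dimension $2$ meets at least one line of $W$ in a $2$-subspace, because $3+3\cdot$... Instead of that route, I would parametrize directly: $B=S\oplus\langle w\rangle$ with $S\le\langle x\rangle$ a $2$-subspace and $w\notin\langle x\rangle$. The $G$-orbits of such $B$ then correspond to $H$-orbits of $2$-subspaces $S$ of $\GF(q^l)$. The exception is when $B$ meets two lines in $2$-subspaces, which forces $3\mid l$ and a $\GF(q^3)$-structure; this is the source of the $t\in\{0,1\}$ condition.

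For each such orbit I would compute two incidence numbers:
\begin{itemize}
\item $a$, the number of blocks of the orbit through a fixed generic $T$;
\item $b$, the number of blocks of the orbit through a fixed special $T$.
\end{itemize}
Since $G$ is transitive on the generic and on the special $2$-subspaces, these numbers are constant.

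The design condition is then a pair of linear equations in the multiplicities. Let $\mu$ be the multiplicity of the $d=3$ orbit, let $\lambda'$ be the index of the design placed in lines, and let $c_S$ be the multiplicities of the $d=2$ orbits. The equations are
\[
\mu N_3+\sum_S c_S a_S=\lambda,\qquad \lambda'+\sum_S c_S b_S=\lambda .
\]
I expect the $d=2$ orbits to give $a_S,b_S$ values whose combinations, together with $N_3$, realize any $\lambda$ of the form $sq(q+1)(q^3-1)$ on the generic side. The $3\mid l$ orbit, whose $B$ lies in a $\GF(q^3)$-line over a $2$-dimensional space, contributes the extra $tq(q^2-1)$; its multiplicity is capped at $1$ if we want simplicity, and that orbit is unique.

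\textbf{Main obstacle.} The hardest step is computing $a_S,b_S$ exactly and matching the special side using the given design $\mathcal{D}$ rather than an arbitrary one. Concretely, I would choose $\mathcal{D}$ itself, transported into each line, and select the generic-covering blocks so that each generic $T=\langle u,v\rangle$ is covered $\lambda$ times. I would try first the construction $\{S\oplus\langle w\rangle\}$ in which $S$ ranges over the $2$-subspaces of blocks of $\mathcal{D}$. The hope is that this yields generic coverage proportional to $\lambda$ with exactly the factor that the form $sq(q+1)(q^3-1)+tq(q^2-1)$ reflects. Nontriviality follows because not all $3$-subspaces are used.
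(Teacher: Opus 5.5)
Your overall framework is the right one: Kramer--Mesner for $G=\GL(m,q^l)$, with copies of $\mathcal D$ placed in the lines of the Desarguesian spread, and your $d=1$ part is fine. The $d=2$ analysis, however, is wrong, and it is exactly where the form $sq(q+1)(q^3-1)+tq(q^2-1)$ comes from.

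A $3$-subspace $B$ with $\dim_{\GF(q^l)}\langle B\rangle_{\GF(q^l)}=2$ need \emph{not} meet any line in a $2$-subspace. Besides your blocks $S\oplus\langle w\rangle$ (the orbits of $T(u_1)=\langle Y_1,Y_2,u_1Y_1\rangle$), there are the blocks $T(u_1,u_2)=\langle Y_1,Y_2,u_1Y_1+u_2Y_2\rangle$ with $1,u_1,u_2$ linearly independent over $\GF(q)$. By Lemma~\ref{vvv}, any two $\GF(q)$-independent vectors of such a block are $\GF(q^l)$-independent. So it meets every line in dimension at most $1$ and contains only generic $2$-subspaces. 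Your parametrization omits this class entirely.

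The ``exception'' to which you attribute the $t$-term cannot occur. Distinct lines meet in $0$, so a $3$-subspace meeting two of them in $2$-subspaces would have dimension at least $4$.

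There is a second error: $G$ is not transitive on special $2$-subspaces. Its orbits on $\Omega_2^1$ correspond to the $H$-orbits on $2$-subspaces of $\GF(q^l)$. The orbit of $S\oplus\langle w\rangle$ covers a special $T$ only if $T$ lies in the $G$-orbit of $S$, and then exactly $e=(q^{ml}-q^l)/(q^3-q^2)$ times (Lemma~\ref{valueOkur1}). So your second equation is really one equation per $H$-orbit.

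Consequently, the three block types you use cannot work for general $m$. The index $\lambda$ is fixed by the seed design, while $N_3$ and $e$ grow with $m$. For large $m$ this forces $\mu=0$ and $c_S=0$, which leaves generic $2$-subspaces uncovered. Note also that the $S\oplus\langle w\rangle$ orbits have generic incidence only $(q+1)^2$ or $q+1$, never multiples of $q(q+1)(q^3-1)$.

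The missing step is the analysis of the $T(u_1,u_2)$-orbits:
\begin{enumerate}
\item They correspond bijectively to $H$-orbits of $3$-subspaces $\langle 1,u_1,u_2\rangle\le\GF(q^l)$ (Lemma~\ref{mapOur}).
\item Their stabilizers have order $|H_{\langle 1,u_1,u_2\rangle}|\prod_{i=2}^{m-1}(q^{ml}-q^{il})$ (Lemma~\ref{G_Our}).
\item Counting the $h\in G$ with $h(Y_1),h(Y_2)\in T(u_1,u_2)$ shows that each such orbit covers every generic $2$-subspace exactly $(q^3-1)(q^3-q)/(q^u-1)$ times, where $H_{\langle 1,u_1,u_2\rangle}\cong\GF(q^u)^*$ and $u\mid\gcd(3,l)$.
\end{enumerate}
This number is $q(q+1)(q^3-1)$ for $u=1$ and $q(q^2-1)$ for $u=3$. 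The case $u=3$ occurs only for the single orbit $\langle 1,u_1,u_2\rangle=\GF(q^3)$ when $3\mid l$, and that is the real source of the condition $t\in\{0,1\}$.

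To build the design, take $s$ orbits of the first kind (with repetition if $s>n_{3,1}^l$), $t$ of the second kind, and the copies of $\mathcal D$ in the lines, and use \emph{no} $T(u_1)$- or $\Omega_3^3$-orbits. The line copies have zero incidence with generic $2$-subspaces, and the $T(u_1,u_2)$-orbits have zero incidence with special ones (Lemma~\ref{21kr}). This is Theorem~\ref{pbd} with $K=\{3\}$, $k=3$, $w=0$. Nontriviality holds because the $T(u_1)$-orbits are unused.
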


Now we display the fundamental relationship between $q$-GDDs and subspace $2$-designs.

	\begin{thm}\rm{\cite{buratti}}
		If there exists a $2$-$(v+1,k,1)_q$ design, then there exists a $(v,k-1,k,q^2)_q$-GDD.
	\end{thm}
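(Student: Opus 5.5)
The plan is the $q$-analog of deriving a GDD from a Steiner system by deleting a point. Let $(W,\mathcal{D})$ be a $2$-$(v+1,k,1)_q$ design with $\dim W=v+1$. Fix a $1$-subspace $P\le W$ and a hyperplane $V\le W$ with $P\not\le V$, so $W=P\oplus V$ and $\dim V=v$. Let $\pi\colon W\to V$ be the projection along $P$.

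\textbf{Groups.} The blocks through $P$ are $k$-subspaces $D\ni P$. Since $\lambda=1$, every $2$-subspace $\langle P,x\rangle$ lies in exactly one block. Hence the quotients $D/P$ partition the $1$-subspaces of $W/P\cong V$. Transporting via $\pi$, the sets $G_D=D\cap V=\pi(D)$ are $(k-1)$-subspaces of $V$ partitioning its $1$-subspaces. These form $\mathcal{G}$.

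\textbf{Blocks.} For each block $B\in\mathcal{D}$ with $P\not\le B$, I take $\pi(B)$. This is a $k$-subspace of $V$, because $\ker\pi|_B=B\cap P=0$.

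Condition (i) holds as follows. Suppose $\dim(\pi(B)\cap G_D)\ge2$. Then $\langle P,\pi(B)\cap G_D\rangle$ is a $3$-subspace of $D$. It meets $B+P$ in something of dimension at least $3$. Pulling back, $B\cap(D+P)=B\cap D$ has dimension at least $2$. Two distinct blocks sharing a $2$-subspace contradicts $\lambda=1$.

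\textbf{Counting for condition (ii).} Take a $2$-subspace $T\le V$ not inside any group. The blocks $B$ with $P\not\le B$ and $\pi(B)\supseteq T$ are exactly those with $B\cap(T+P)$ of dimension $2$, since $\pi(B)\supseteq T$ iff $\dim(B\cap(T+P))=2$. The $3$-space $S=T+P$ contains $q^2+q+1$ two-subspaces. Of these, $q+1$ contain $P$, and each of those lies in the unique block through it.

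The key claim is that those $q+1$ blocks are distinct. If two of them coincided, that block would contain $S$, and $\pi(S)=T$ would then lie in a group, which is excluded.

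The remaining $q^2$ two-subspaces $U\le S$ avoiding $P$ each lie in a unique block $B_U$. Such a $B_U$ does not contain $P$: otherwise $B_U\supseteq S$, which again forces $T$ into a group. Distinct $U$ give distinct $B_U$: if $B_U=B_{U'}$, then $B_U\supseteq U+U'=S\ni P$, a contradiction. Each such block meets $S$ in exactly $U$, since a larger intersection would be all of $S$. So $T$ lies in exactly $q^2$ blocks $\pi(B_U)$.

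Different $B$ could have the same image $\pi(B)$, but that only makes the design possibly non-simple. The count is still $\lambda=q^2$, so we obtain a $(v,k-1,k,q^2)_q$-GDD.

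\textbf{Main obstacle.} The delicate step is this intersection bookkeeping, in particular verifying that no block meets $S$ in all of $S$. This is exactly where the hypothesis that $T$ is not in a group enters.
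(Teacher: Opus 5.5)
The paper gives no proof of this statement: it is quoted from \cite{buratti} as background, so there is no in-paper argument to compare against. Your proposal is correct and complete. It is the natural $q$-analog of deleting a point from a Steiner system. Equivalently, you are working in the quotient $W/P$, which your projection identifies with $V$; note that $\pi(B)=(B+P)\cap V$.

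A few small points are worth tightening.

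\begin{itemize}
\item In condition (i), the ``pulling back'' step should be justified properly. One way is the modular law $(B+P)\cap D=(B\cap D)+P$, valid because $P\le D$. Another is to take the preimage of $\pi(B)\cap G_D$ under the injective map $\pi|_B$; this preimage lies in $B\cap D$ because $P\le D$. Writing $B\cap(D+P)=B\cap D$ is a tautology and does not by itself carry the dimension bound.
\item In (ii), state the converse explicitly: every block $B\not\ni P$ with $T\le\pi(B)$ equals $B_U$ for $U=B\cap S$. This follows at once from the uniqueness of the block through $U$.
\item Your caveat about repeated images can be sharpened. Suppose $k\ge 3$ and $\pi(B)=\pi(B')$. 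Then $B+P=B'+P$ has dimension $k+1$, so $\dim(B\cap B')\ge k-1\ge 2$, which forces $B=B'$. So the GDD you construct is in fact simple. Repetitions occur only in the degenerate case $k=2$, where each $2$-subspace of $V$ appears $q^2$ times.
\end{itemize}
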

	
	\begin{thm}\rm{\cite{Schram}}\label{aaabb}
		Suppose that $(V,\mathcal{G},\mathcal{B})$ is a $(v,m,k,\lambda)_q$-GDD and $(Z,\mathcal{D})$ is a $2$-$(m+n,k,q^{\,n(k-2)}\lambda)_q$ design. Assume further that $Z$ contains an $n$-subspace $U$ such that $(U,\mathcal{D}_U)$ forms a $2$-$(n,k,q^{\,n(k-2)}\lambda)_q$ design, where $\mathcal{D}_U = \{\, B \in \mathcal{D} : B \subseteq U \,\}.$ Then there exists a $2$-$(v+n,k,q^{\,n(k-2)}\lambda)_q$ design.
	\end{thm}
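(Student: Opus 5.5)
We prove Theorem~\ref{aaabb} by building the $2$-$(v+n,k,\Lambda)_q$ design directly, with $\Lambda=q^{n(k-2)}\lambda$. Fix $W=V\oplus U'$, where $\dim U'=n$, so $\dim W=v+n$. For each group $G\in\mathcal{G}$, the subspace $G\oplus U'$ has dimension $m+n$. We identify it with $Z$ by an isomorphism $\phi_G$ that sends $U$ onto $U'$; we may take $\phi_G|_{U}$ to be one fixed identification $U\cong U'$, independent of $G$. Two families of blocks are then used. The first consists of all subspaces $\phi_G(D)$ with $D\in\mathcal{D}\setminus\mathcal{D}_U$ and $G\in\mathcal{G}$. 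The second is a single copy of $\phi(\mathcal{D}_U)$ inside $U'$. The third consists of lifts of the GDD blocks: for each $B\in\mathcal{B}$ and each linear map $f\colon B\to U'$, take the graph $\{x+f(x):x\in B\}$, which is a $k$-subspace of $W$ meeting $U'$ trivially. There are $q^{nk}$ such lifts per block.

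We then count, for each $2$-subspace $T\le W$, how many blocks contain it. Let $\pi\colon W\to V$ be the projection with kernel $U'$, and split into cases on $\dim\pi(T)$.

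\emph{Case $\dim \pi(T)=0$.} Then $T\le U'$. For each $G$, the subspace $\phi_G^{-1}(T)\le U$ lies in $\Lambda$ blocks of $\mathcal{D}$. Of these, $\Lambda$ lie in $\mathcal{D}_U$, because $\mathcal{D}_U$ is itself a $2$-design with the same $\lambda$. Hence $T$ lies in $0$ blocks of $\mathcal{D}\setminus\mathcal{D}_U$, and the single copy of $\mathcal{D}_U$ supplies exactly $\Lambda$. The lifted GDD blocks meet $U'$ trivially, so they contribute nothing.

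\emph{Case $\pi(T)$ inside some group $G$.} This covers both $\dim\pi(T)=1$ and $\dim\pi(T)=2$ with $\pi(T)\le G$. Here $T\le G\oplus U'$ and $T\not\le U'$. It lies in $\Lambda$ blocks of $\phi_G(\mathcal{D})$, none of which are in $\phi_G(\mathcal{D}_U)$, so all come from the first family. It lies in no other $G'\oplus U'$, since $\pi(T)\cap G'=0$. It is not contained in a lifted block either: a lift containing $T$ would force $\dim(B\cap G)\ge \dim\pi(T)$. This is impossible when $\dim\pi(T)=2$. When $\dim\pi(T)=1$, $T$ meets $U'$ nontrivially, which a graph cannot do.

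\emph{Case $\pi(T)$ a $2$-subspace not in any group.} $T$ lies in no $G\oplus U'$, so only lifted blocks can contain it. Write $T$ as the graph of a map $h\colon\pi(T)\to U'$. A lift of $B$ contains $T$ exactly when $\pi(T)\le B$ and $f|_{\pi(T)}=h$. There are $\lambda$ such $B$, and for each the number of extensions of $h$ to $B$ is $q^{n(k-2)}$. The total is $\lambda q^{n(k-2)}=\Lambda$.

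The main obstacle is the bookkeeping in the group case. Everything hinges on choosing the $\phi_G$ compatibly on $U$, so that $\mathcal{D}_U$ is used once globally rather than once per group. We also need the GDD condition $\dim(B\cap G)\le1$ to exclude spurious incidences. This is exactly why $\lambda$ is scaled by $q^{n(k-2)}$ and why the subdesign hypothesis on $U$ is required.
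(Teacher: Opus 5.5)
Your proof is correct. The paper gives no proof of its own: it quotes the result from Schram and later refers to it only as the ``Filling in holes'' construction, which is exactly what you carry out. Your three block families are the blocks $\phi_G(D)$ with $D\in\mathcal{D}\setminus\mathcal{D}_U$ for each group $G$, one design on $U'$, and the $q^{nk}$ graph-lifts of each GDD block. Your case split on $\dim\pi(T)$, and on whether $\pi(T)$ lies in a group, then gives index $q^{n(k-2)}\lambda$ for every $2$-subspace $T$.

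Two minor corrections:
\begin{enumerate}
\item You announce ``two families of blocks'' but use three.
\item Your closing remark overstates the role of choosing $\phi_G$ compatibly on $U$. Only $\phi_G(U)=U'$ is needed. For $T\le U'$, each group contributes zero blocks from $\mathcal{D}\setminus\mathcal{D}_U$ however $U$ is identified with $U'$. Any single $2$-$(n,k,q^{n(k-2)}\lambda)_q$ design placed on $U'$ then supplies the required incidences.
\end{enumerate}
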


	 A \emph{\(q\)-analog of a \(t\)-wise balanced design}, also called a \emph{\(t\)-\((v,K,\lambda)_q\) design} or a \emph{\(t\)-wise balanced design over the finite field}, is a collection \(\mathcal{B}\) of subspaces of \(V\) whose dimensions belong to \(K\), such that every \(t\)-subspace of \(V\) is contained in exactly \(\lambda\) blocks of \(\mathcal{B}\). Obviously, if $K=\left\lbrace k\right\rbrace $, then it is a subspace $t$-design.
	In particular, for \(t=2\), a \(2\)-\((v,K,\lambda)_q\) design is referred to as a \emph{\(q\)-analog of a pairwise balanced design}, and briefly denoted by a $q$-PBD$(v, K, \lambda)$. Braun \cite{PBD} presented a construction of an infinite family of nontrivial \(t\)-\((v,K,\lambda)_q\) designs admitting the standard Borel subgroup as an automorphism group.
	\begin{thm}\rm{\cite{PBD}}
		Let \(t\) be a positive integer. There exists a $t$-$(t+4,\{t+1,t+2\},q^3+q^2+q+1)_q$ design.
	\end{thm}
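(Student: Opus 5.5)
Since this theorem is quoted from \cite{PBD}, I would not try to recover Braun's exact argument. Instead I would build a design directly with the stated parameters and use the Kramer--Mesner method only to make it simple and nontrivial. The starting point is an identity. Let $V$ have dimension $t+4$ and let $T$ be a $t$-subspace. The number of $(t+1)$-subspaces of $V$ containing $T$ is $\left[{4\atop 1}\right]_q=q^3+q^2+q+1$, which is exactly the required $\lambda$. So the set of all $(t+1)$-subspaces is already a $t$-$(t+4,\{t+1\},\lambda)_q$ design, and the task is to trade some $(t+1)$-dimensional blocks for $(t+2)$-dimensional blocks without changing the count through each $t$-subspace.

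The trading step works as follows. Let $\mathcal{D}$ be a collection of $(t+2)$-subspaces that pairwise meet in dimension at most $t$. Then every $(t+1)$-subspace lies in at most one member of $\mathcal{D}$. Take as blocks:
\begin{itemize}
\item[(a)] every $(t+1)$-subspace not contained in any member of $\mathcal{D}$;
\item[(b)] every $D\in\mathcal{D}$, each taken with multiplicity $q+1$.
\end{itemize}
Fix a $t$-subspace $T$, and suppose $T$ lies in $n_T$ members of $\mathcal{D}$. Each such $D$ contains exactly $\left[{2\atop 1}\right]_q=q+1$ of the $(t+1)$-subspaces through $T$, and these are removed in (a). The members of $\mathcal{D}$ through $T$ meet pairwise exactly in $T$, so no $(t+1)$-subspace is removed twice. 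Hence $T$ lies in
\[
(q^3+q^2+q+1)-(q+1)n_T+(q+1)n_T=q^3+q^2+q+1
\]
blocks. Any nonempty $\mathcal{D}$ therefore gives a $t$-$(t+4,\{t+1,t+2\},q^3+q^2+q+1)_q$ design that genuinely uses both dimensions. For instance, $\mathcal{D}$ can be a single $(t+2)$-subspace. This already proves existence as stated, but the design is not simple.

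To obtain a simple design, which is what Braun actually achieves, I would use the Kramer--Mesner method with the standard Borel subgroup $B$ of $\GL(t+4,q)$ as automorphism group.
\begin{enumerate}
\item List the $B$-orbits on $t$-, $(t+1)$- and $(t+2)$-subspaces. These are indexed by Schubert-cell data, namely the position sets of pivots relative to the fixed flag.
\item Form the incidence matrices $A_{t,t+1}$ and $A_{t,t+2}$ between orbits. The number of orbits grows with $t$, but the incidence entries are $q$-counts depending only on the relative pivot positions.
\item Seek a $0/1$ vector $x=(x_1,x_2)$ with $A_{t,t+1}x_1+A_{t,t+2}x_2=\lambda\mathbf{1}$.
\end{enumerate}
My proposed solution is the $B$-invariant analogue of the trade above. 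Take $x_2$ to select some $B$-orbits of $(t+2)$-subspaces, and let $x_1$ select every $(t+1)$-orbit except those consisting of subspaces lying in a selected $(t+2)$-space. A uniform recipe in $t$ would describe the selected orbits through the last few flag positions, which is where the fixed offset $4$ enters.

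I expect the main obstacle to be the column sums in the simple version. A selected $(t+2)$-block contributes $1$ to $T$ but removes $q+1$ of the $(t+1)$-blocks through $T$. To compensate, the $t$-subspaces covered by $\mathcal{D}$ must be hit by several selected $(t+2)$-subspaces, in a way that is balanced orbit by orbit. My first attempt would be to treat $t=1$ and $t=2$ by hand, read off which pivot patterns the selected $(t+2)$-orbits use, and then prove the general case by induction on $t$. The induction would either add a new leading coordinate or pass to the quotient by the first flag vector, which is how the common offset $4$ between $t$ and $v$ is preserved.
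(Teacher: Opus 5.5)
Your first argument is correct and proves the statement exactly as written. A $(t+1)$-subspace through $T$ can lie in a member of $\mathcal D$ only if that member contains $T$, and the members of $\mathcal D$ through $T$ meet exactly in $T$, so the count through $T$ is $\left[{4\atop 1}\right]_q-(q+1)n_T+(q+1)n_T$. As you note, even the complete set of $(t+1)$-subspaces qualifies, since the definition only requires block dimensions to lie in $K$.

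The paper gives no proof of this statement; it quotes Braun. Braun's point, as the introduction says, is a \emph{nontrivial} design admitting the standard Borel subgroup, obtained by solving a Kramer--Mesner system over Borel orbits. Your trade is a genuinely different, orbit-free route. It settles bare existence in one line and even uses both block dimensions, but the multiplicity $q+1$ makes it non-simple. Braun's method buys simplicity and a prescribed automorphism group, at the price of an orbit computation.

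Your plan for the simple version is not yet an argument, and the obstacle you flag is real. If the selected $(t+2)$-subspaces through $T$ meet pairwise only in $T$, each one removes $q+1$ blocks through $T$ but adds only one. No induction on $t$ is needed, though. In $V/T\cong\GF(q)^4$ you want a family of $2$-subspaces that covers exactly as many $1$-subspaces as it has members, and the $2$-subspaces of a fixed $3$-subspace do exactly this.

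So fix a hyperplane $H$ of $V$. Take all $(t+1)$-subspaces not contained in $H$, together with all $(t+2)$-subspaces of $H$. For $T\not\subseteq H$ nothing changes. For $T\subseteq H$ you lose $\left[{3\atop 1}\right]_q$ blocks and gain $\left[{3\atop 2}\right]_q=\left[{3\atop 1}\right]_q$.

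This design is simple and uses both dimensions. It is invariant under the stabilizer of $H$, hence under the standard Borel subgroup when $H$ is the hyperplane of its flag. In Kramer--Mesner terms it selects the Borel orbits of $(t+1)$-subspaces not contained in $H$ and of $(t+2)$-subspaces contained in $H$. The same construction in dimension $v$ balances only when $\left[{v-t-1\atop 1}\right]_q=\left[{v-t-1\atop 2}\right]_q$, i.e.\ $v=t+4$, which explains the offset $4$.
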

	
	Motivated by the preliminary findings in \cite{Itoh,Wz,WZ}, we pursue a deeper investigation into the orbit incidence matrix under the action of  a general linear group in this paper. Through a more rigorous and comprehensive characterization of this matrix, we are able to generalize the results of \cite{Wz,WZ} and establish a broad range of new infinite families of $q$-GDDs.

	The rest of the paper is organized as follows. In Section \ref{section2}, we collect several preliminary results on general linear groups and Singer cycles; we recall the Kramer--Mesner method and introduce the notation used throughout the paper. In Section \ref{section3}, we give a complete description of the action of \(\GL(m,q^l)\) on \(\Omega_k^{k-1}\), where $3\leq k\leq \min\left\lbrace m+1,l\right\rbrace $ and \(\Omega_k^{k-1}\) is the set of \(k\)-subspaces of $\GF(q)^{ml}$ whose \(\GF(q^l)\)-span has dimension \(k-1\). In particular, we compute the number of $G$-orbits on $\Omega_k^{k-1}$ and the length of every $G$-orbit. In Section \ref{section4}, we explicitly determine the submatrices of $\GL(m,q^l)$-incidence matrix between $2$-subspaces and $k$-subspaces. In Section \ref{section5}, we apply a $\GL(m,q^l)$-incidence submatrix to construct new infinite families of simple \(q\)-GDDs with arbitrary block dimension. We further present a recursive construction for simple $q$-analogs of pairwise balanced designs and consequently construct several new infinite families of such designs. Combining the above two constructions, we also obtain numerous infinite families of non-simple subspace $2$-designs. In Section \ref{section6}, we summarize the main results of the paper and conclude with some final remarks.

	\section{Preliminaries}\label{section2}
	
	 For $1 \le k \le v$, we denote by $\left[ V\atop k\right]_q $ the set of all $k$-subspaces of $V$ and this set is also called the \emph{Grassmannian}. It is well-known that the cardinality of $\left[ V\atop k\right]_q $ is denoted by the \emph{Gaussian binomial coefficient}
	\[
	\left[ v\atop k\right]_q
	= \frac{(q^{v}-1)(q^{v-1}-1)\cdots(q^{v-k+1}-1)}
	{(q^{k}-1)(q^{k-1}-1)\cdots(q-1)}.
	\]
	The subscript $q$ can be omitted if no confusion arises.
	For a simple $t$-$(v,K,\lambda)_q$ design $(V,\mathcal{B})$, its \emph{supplementary design} is $\left( V, \bigcup_{k\in K}\left[ V\atop k\right] \setminus \mathcal{B}\right),$ which is a $t$-$(v,K,\lambda')_q$ design with $\lambda'=\sum_{k\in K}\left[ v-t\atop k-t\right] -\lambda.$

	\subsection{General linear groups and Singer cycles}\label{GS}
	The \emph{general linear group $\GL(V)$}, also denoted by $\GL(v,q)$, consists of all invertible $\GF(q)$-linear transformations of $V$. After fixing a basis of $V$, we identify $\GL(V)$ with the group of all invertible $v\times v$ matrices over $\GF(q)$. Its cardinality is
	\[
	|\GL(v,q)|
	= \prod_{i=0}^{v-1} \left(q^{v} - q^{i}\right) .
	\]
	 Let $W \leq V$, $G\leq \GL(v,q)$. Define the \emph{$G$-orbit of $W$} by
	$$W^G=\left\lbrace g(W):g\in G\right\rbrace $$
	and the \emph{stabilizer of $W$ in $G$} by
	$$G_W=\left\lbrace g\in G:g(W)=W\right\rbrace.$$
	
	Fixing a $\GF(q)$-linear isomorphism $V \cong \GF(q^v)$, we may identify $V$ with the
	field $\GF(q^v)$ considered as a $v$-dimensional vector space over $\GF(q)$.  Let
	$w \in \GF(q^v)^{*}:=\GF(q^v)\setminus \left\lbrace 0\right\rbrace $ be a primitive element.  Then the map
	\[
	\begin{aligned}
		T_{w}: \GF(q^v) &\to \GF(q^v)\\
		 x &\mapsto wx
	\end{aligned}
	\]
	is $\GF(q)$-linear, and its matrix with respect to a fixed $\GF(q)$-basis of
	$\GF(q^v)$ is an element of $\GL(v,q)$.  The cyclic subgroup
	\[
	\langle T_{w} \rangle \;\le\; \GL(v,q),
	\]
	called a \emph{Singer cycle}, has order $q^{v}-1$ and acts regularly on $V \setminus \{0\}$. The induced action of a Singer cycle on the set of $1$-subspaces of $V$ is sharply transitive.
	Singer cycles are unique up to conjugacy in $\GL(v,q)$. We record the following facts on the action of a Singer cycle on subspaces of $V$.
\begin{thm}\rm{\cite{singer}}\label{sin}
	Let \(V\) be a \(v\)-dimensional vector space over \(\GF(q)\) and let \(H\) be a Singer cycle of $\GL(v,q)$.
	\begin{enumerate}
		\item [\rm(i)] 	For any $d$-subspace $W\leq V$, the stabilizer $H_W$ is isomorphic to \(\GF(q^u)^*\) for some $u\mid \gcd(d,v)$.
		\item [\rm(ii)] The number of $H$-orbits on the set of \(d\)-subspaces of \(V\) is
		\begin{equation}\label{ndv}
			n_d^v:=\frac{1}{q^v-1}
			\sum_{t\mid \gcd(d,v)}
			\left[ v/t\atop d/t\right]_{q^t}
			\left(
			\sum_{u\mid t}\mu\!\left(t/u\right)(q^u-1)
			\right),
		\end{equation}
		where \(\mu\) denotes the Möbius function.
		\item [\rm(iii)] For each $u\mid\gcd(d,v)$, the number of \(H\)-orbits on the \(d\)-subspaces of \(V\) whose stabilizer is isomorphic to \(\GF(q^u)^*\) is exactly
		\begin{equation}\label{ndu}
			n_{d,u}^v :=
			\frac{q^u-1}{q^v-1}
			\sum_{t:\,u\mid t\mid \gcd(d,v)}
			\mu\!\left(t/u\right)
			\left[ v/t\atop d/t\right]_{q^t}.
		\end{equation}
	\end{enumerate}
\end{thm}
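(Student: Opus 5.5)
We identify $V$ with $\GF(q^v)$, so that $H=\langle T_w\rangle\cong\GF(q^v)^*$ acts by multiplication. The central idea is that the stabilizer of a subspace makes that subspace a vector space over a subfield, and the orbit counts then follow from Burnside's lemma combined with Möbius inversion over the subfield lattice.

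\emph{Part (i).} Let $W$ be a $d$-subspace and set $S=\{\alpha\in\GF(q^v)^*:\alpha W=W\}=H_W$. We claim that $F=S\cup\{0\}$ is a subfield of $\GF(q^v)$. Clearly $F$ is closed under multiplication. For addition, take $\alpha,\beta\in F$. Then $(\alpha+\beta)W\subseteq W$. If $\alpha+\beta\neq 0$, multiplication by $\alpha+\beta$ is injective, so $(\alpha+\beta)W=W$. Hence $F$ is a subfield, say $F=\GF(q^u)$ with $u\mid v$, and $H_W\cong\GF(q^u)^*$. Since $W$ is then an $F$-vector space, $u\mid d$.

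\emph{Part (iii) first.} Fix $t\mid\gcd(d,v)$. The $d$-subspaces $W$ with $\GF(q^t)\subseteq F_W$ are exactly the $\GF(q^t)$-subspaces of $\GF(q^v)$ of $\GF(q^t)$-dimension $d/t$. There are $\left[ v/t\atop d/t\right]_{q^t}$ of them. Let $a_u$ be the number of $d$-subspaces whose stabilizer field is exactly $\GF(q^u)$. Then
\[\left[ v/t\atop d/t\right]_{q^t}=\sum_{u:\,t\mid u\mid\gcd(d,v)} a_u .\]
Möbius inversion on the divisor lattice of $\gcd(d,v)$ gives
\[a_u=\sum_{t:\,u\mid t\mid\gcd(d,v)}\mu(t/u)\left[ v/t\atop d/t\right]_{q^t}.\]
Each orbit of such subspaces has length $|H|/|H_W|=(q^v-1)/(q^u-1)$, so the number of these orbits is $a_u(q^u-1)/(q^v-1)$. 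This is exactly $n^v_{d,u}$.

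\emph{Part (ii).} Sum over $u$:
\[n_d^v=\sum_{u\mid\gcd(d,v)} n^v_{d,u}=\frac{1}{q^v-1}\sum_{u}\sum_{t:\,u\mid t}(q^u-1)\,\mu(t/u)\left[ v/t\atop d/t\right]_{q^t}.\]
Exchanging the order of summation puts this in the form of (\ref{ndv}). The same count can also be checked directly with Burnside's lemma: an element $\alpha$ fixes $W$ exactly when $\GF(q)(\alpha)\subseteq F_W$.

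The only delicate point is the subfield closure argument in (i), in particular handling the case $\alpha+\beta=0$. The rest is routine inversion and summation.
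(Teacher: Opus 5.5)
The paper gives no proof of this theorem; it is quoted from \cite{singer}. Your argument therefore stands on its own, and it is a correct, self-contained proof.

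It is organized differently from what the shape of \eqref{ndv} suggests. That formula is exactly what Burnside's lemma produces directly. The inner sum $\sum_{u\mid t}\mu(t/u)(q^u-1)$ is the number of $\alpha\in H$ with $\GF(q)(\alpha)=\GF(q^t)$. Each such $\alpha$ fixes precisely the $\left[ v/t\atop d/t\right]_{q^t}$ subspaces that are $\GF(q^t)$-subspaces, and none if $t\nmid d$. You instead prove (iii) first by M\"obius inversion over the subfield lattice, then obtain (ii) by summing over $u$ and interchanging the sums. That order suits this paper better. The refined count (iii), not the total (ii), is what is actually used in Corollary~\ref{num}, Lemma~\ref{valueOur2}(ii) and Theorem~\ref{GDD}, and a plain Burnside count yields only the total. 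Your closing Burnside remark confirms the two forms agree.

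Three small points should be stated explicitly:
\begin{enumerate}
\item[(a)] For every $\GF(q)$-subspace $W$ we have $\GF(q)^*\subseteq H_W$, so $F\supseteq\GF(q)$. This is what justifies writing $F=\GF(q^u)$ with $u\mid v$, and $d=u\cdot\dim_F W$, hence $u\mid d$. It also gives additive inverses in $F$ via $-1\in\GF(q)^*$, though finiteness suffices for that.
\item[(b)] Since $H$ is abelian, $H_{hW}=H_W$. So the set of $d$-subspaces with stabilizer field exactly $\GF(q^u)$ is a union of $H$-orbits, all of length $(q^v-1)/(q^u-1)$. This is what licenses dividing $a_u$ by that length.
\item[(c)] ``Stabilizer isomorphic to $\GF(q^u)^*$'' means the same as ``stabilizer field exactly $\GF(q^u)$'', because subgroups of the cyclic group $H$ are determined by their order.
\end{enumerate}

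Reducing to $H=\langle T_w\rangle$ is legitimate because all Singer cycles are conjugate in $\GL(v,q)$. The case $\alpha+\beta=0$ you flag as delicate is immediate, since $0\in F$ by definition.
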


\subsection{Kramer–Mesner method }

An element \(g \in \GL(v,q)\) is called an \emph{automorphism of a \(t\)-\((v,k,\lambda)_q\) design} \(\left( V,\mathcal{B}\right) \) if and only if
\[
\mathcal{B}=g\left( \mathcal{B}\right) :=\{g(S)\mid S\in\mathcal{B}\}.
\]
The set of all automorphisms of \(\left(V, \mathcal{B}\right) \) forms a group, called the \emph{full automorphism group of \(\left( V, \mathcal{B}\right) \)}. Any subgroup of the full automorphism group is called an \emph{automorphism group}.

Note that a \(t\)-\((v,k,\lambda)_q\) design \(\left(V,\mathcal{B}\right) \) admits a subgroup \(G\le \GL(v,q)\) as an automorphism group if and only if \(\mathcal{B}\) is a union of some \(G\)-orbits on
$\left[ V\atop k\right] .$
This leads to a construction method analogous to the Kramer--Mesner construction for classic \(t\)-designs \cite{KM}.

Let $G\leq \GL(v,q)$ act on $V=\GF(q)^v$ and this induces an action of $G$ on subspaces of $V$. Take a complete set of representatives $\mathcal R_i$ of the $G$-orbits on $\left[ V\atop i \right]$, $1\leq i\leq v$. Given $1\leq t\leq k< v$, the \emph{$G$-incidence matrix $M_{t,k}$} between $\left[ V\atop t \right] $ and $\left[ V\atop k \right] $, is defined as an $\left| \mathcal R_t \right|\times \left| \mathcal R_k \right| $ matrix, whose rows and columns are indexed by $\mathcal R_t$ and $\mathcal R_k$, respectively, and the $(T,K)$-entry is
 \[\bigl|\{\,K' \in K^G\mid T\subseteq K'\,\}\bigr|.\]
The following is immediate.

\begin{thm}\label{KM1}
	A \(t\)-\((v,k,\lambda)_q\) design admitting \(G\le \GL(v,q)\) as an automorphism group exists if and only if there is a nonnegative integer column vector \(\boldsymbol x\) such that
	\[
	M_{t,k}\boldsymbol x=\lambda \boldsymbol{1},
	\]
	where \(\boldsymbol{1}\) denotes the all-one column vector. Furthermore, if $\boldsymbol x$ is a $0$-$1$ column vector, then a simple $t$-$(v,k,\lambda)_q$ design admitting \(G\) as an automorphism group exists.
\end{thm}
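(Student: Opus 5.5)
The plan is a double count of incidences: fix a $t$-subspace $T$ and count, orbit by orbit, how many blocks of a $G$-invariant collection contain $T$. First I record that $G$ preserves inclusion. If $T\subseteq K'$ and $g\in G$, then $g(T)\subseteq g(K')$, and $g$ permutes $\left[ V\atop k\right]$ while mapping each orbit $K^G$ onto itself. Consequently the quantity
\[
\bigl|\{K'\in K^G : T\subseteq K'\}\bigr|
\]
depends only on the $G$-orbit of $T$. This justifies indexing the rows of $M_{t,k}$ by the representatives $\mathcal R_t$: the $(T,K)$-entry counts, for every $T'\in T^G$, the members of $K^G$ that contain $T'$.

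For the forward direction, let $(V,\mathcal B)$ be a $t$-$(v,k,\lambda)_q$ design with $G$ as an automorphism group. Since $g(\mathcal B)=\mathcal B$ as a multiset for all $g\in G$, the multiplicity of a block is constant along each $G$-orbit. So $\mathcal B$ is a union of $G$-orbits on $\left[ V\atop k\right]$, where the orbit $K^G$ ($K\in\mathcal R_k$) appears $x_K$ times and $x_K\ge 0$ is an integer. Fix $T\in\mathcal R_t$. The number of blocks containing $T$ is then $\sum_{K\in\mathcal R_k}x_K\,(M_{t,k})_{T,K}$, and the design property says this equals $\lambda$. Hence $M_{t,k}\boldsymbol x=\lambda\boldsymbol 1$.

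For the converse, start from a nonnegative integer solution $\boldsymbol x$ and define $\mathcal B$ as the multiset union of the orbits $K^G$, each with multiplicity $x_K$. This $\mathcal B$ is $G$-invariant by construction, so $G$ is an automorphism group. Now take any $t$-subspace $T'$ and write $T'=g(T)$ with $T\in\mathcal R_t$. By the first paragraph, the number of blocks containing $T'$ equals the number containing $T$, namely $(M_{t,k}\boldsymbol x)_T=\lambda$. So $(V,\mathcal B)$ is a $t$-$(v,k,\lambda)_q$ design.

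For the final assertion, suppose $\boldsymbol x$ is $0$-$1$. Distinct orbits are disjoint, and each orbit is a set, so every block occurs at most once and the design is simple. The only step needing care is the well-definedness of the row entries, that is, the invariance under $G$ from the first paragraph. Everything else is a direct count, so I expect no real obstacle.
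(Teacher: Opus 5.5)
Your proposal is correct and is the standard Kramer--Mesner double count. The paper gives no proof, calling the result immediate from its preceding remark that a design admits $G$ exactly when its block multiset is a union of $G$-orbits on $\left[ V\atop k\right]$. Your check that the $(T,K)$-entry depends only on the orbit of $T$ is precisely the detail the paper leaves implicit.
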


	\subsection{Notation}
	This paper constructs $q$-GDDs over the vector space $V=\GF(q)^{ml}$, admitting $G=\GL(m,q^l)$ as an automorphism group. The incidence matrix $M_{2,k}$ will be studied in detail under the action of $\GL(m,q^l)$, which is closely related with the action of a Singer cycle on subspaces of $\GF(q)^l$.
	
	We view $V=\GF(q)^{ml}$ as an $m$-dimensional vector space over $\GF(q^l)$.
	Let $\left\lbrace  Y_{1},Y_{2},\dots,Y_{m}\right\rbrace $ be a basis
	of $V$ over $\GF(q^l)$.
	With respect to this basis, we identity each element $g\in G$ with its matrix over $\GF(q^l)$, that is,
	\[
	\left( g(Y_1),\dots,g(Y_m)\right)
	=
	\left( Y_1,\dots,Y_m\right) g.
	\]
	We also regard $\GF(q^l)$ as an $l$-dimensional vector space over $\GF(q)$ and fix a basis
	$\left\lbrace x_1(=1),x_2,\dots,x_l\right\rbrace $ of $\GF(q^l)$ over $\GF(q)$. It follows that
	\begin{equation}\label{V}
	V\cong\left\langle  x_iY_j: 1\le i\le l,\ 1\le j\le m\right\rangle_{\GF(q)} .
	\end{equation}
	For convenience, we also identify $V$ with the field $\GF(q^{ml})$ via a fixed $\GF(q)$-linear isomorphism.
	
	 Throughout the rest of the paper, let $m,l,k$ be integers with $3\leq k\leq\min\left\lbrace m+1,l \right\rbrace $ and the following notation will be adopted, unless otherwise stated.
	
	\begin{itemize}
	    \item $V=\GF(q)^{ml}\cong\left\langle  x_iY_j : 1\le i\le l,\ 1\le j\le m\right\rangle_{\GF(q)}$ by \eqref{V};
		\item $G=\GL(m,q^l)$ and $H$ is a Singer cycle of $\GF(q^l)$;
		\item $\langle W \rangle_{\GF(q)} \left(\text{resp.} \  \langle W \rangle_{\GF(q^l)} \right) $  denotes the $\GF(q)$-subspace $\left( \text{resp.}\ \GF(q^l)\text{-subspace}\right)$  of $V$ spanned by the subset $W \subseteq V$. For brevity, $\left\langle W\right\rangle_{\GF(q)} $ is also simply denoted $\left\langle W \right\rangle $;
		\item $\Omega_i^j = \left\{ W \leq V \,:\, \dim_{\GF(q)}(W) = i,\ \dim_{\GF(q^l)}\left(\langle W \rangle_{\GF(q^l)}\right) = j \right\}$;
		\item $\mathcal{F}_i^j$ denotes a complete set of representatives for the $G$-orbits on $\Omega_i^j$;
		\item $\mathcal{O}_i$ denotes a complete set of representatives for the $H$-orbits on $\left[  \GF(q)^l \atop i \right] $;
		\item $M_{t,k}$ denotes the $G$-incidence matrix between $\left[ V\atop t \right] $ and $\left[ V\atop k \right] $, where the rows and columns are indexed by complete sets of representatives of the corresponding $G$-orbits;
		\item $\boldsymbol{0}$ denotes a zero matrix or a zero vector;
		\item $M_{r \times s}(q)$ denotes the set of all $r \times s$ matrices over $\GF(q)$;
		\item $n_d^v$ denotes the number of $H$-orbits on $\left[ \GF(q)^v\atop d \right] $, and $n_{d,u}^v$ denotes the number of $H$-orbits on $\left[ \GF(q)^v\atop d \right] $ with orbit stabilizer isomorphic to $\GF(q^u)^*$, see \eqref{ndv} and \eqref{ndu}.
	\end{itemize}

	\section{$G$-orbits on $\Omega_{k}^{k-1}$}\label{section3}

	Recall that $\Omega_i^j$ consists of all $i$-subspaces of $V$ whose span over $\GF(q^l)$ has dimension $j$. In this section,
 	 we partition the \(G\)-orbits on \(\Omega_{k}^{\,k-1}\) into $k-1$ classes, namely, $\Omega_{k,1}^{k-1}\cup\cdots\cup\Omega_{k,k-1}^{k-1}$,
	represented by subspaces of the form \(T(u_{1},\dots,u_{r})\), where \(1\le r\leq k-1\).
	By analysis on the relationship between $G$-orbits and $H$-orbits, we determine the number of $G$-orbits on $\Omega_k^{k-1}$ and the size of stabilizer of every $k$-subspace in $\Omega_{k}^{k-1}$.
	
\subsection{Orbit decomposition of $\Omega_{k}^{k-1}$}
	For $1\leq r\leq k-1$, let \(u_{1},\dots,u_{r}\in \GF(q^{l})\) such that \(1,u_{1},\dots,u_{r}\) are linearly independent
	over \(\GF(q)\). Then the vectors $Y_{1},\dots,Y_{k-1}, u_{1}Y_{1}+\cdots+u_{r}Y_{r}\in V$ are linearly independent over \(\GF(q)\). Define
	\begin{equation}\label{Tur}
		\begin{aligned}
			T(u_{1},\dots,u_{r})
			&:= \bigl\langle Y_{1},\dots,Y_{k-1},\,u_{1}Y_{1}+\cdots+u_{r}Y_{r}\bigr\rangle\\
			&= \left\{\;\sum_{i=1}^{r}(a_{i}+bu_{i})Y_{i}+\sum_{i=r+1}^{k-1}a_{i}Y_{i}
			\;:\; a_{1},\dots,a_{k-1},b\in \GF(q)\right\}.
		\end{aligned}
	\end{equation}
	Note that for \(1\le i\le r\), the coefficient of \(Y_{i}\) lies in
	\(\langle 1,u_{i}\rangle\subseteq \GF(q^{l})\). Hence \(T(u_{1},\dots,u_{r})\in \Omega_{k}^{\,k-1}\).
	Finally, we denote by $\Omega_{k,r}^{\,k-1}$ the union of all $G$-orbits $T(u_1, \dots, u_r)^G$, where $1, u_1, \dots, u_r$ are linearly independent over $\GF(q)$. Correspondingly, denote by $\mathcal F_{k,r}^{\,k-1}$ a complete set of representatives of $G$-orbits on $\Omega_{k,r}^{\,k-1}$.

\begin{lem}\label{Omega_k_k-1_orbits}
	For \(1\le r\le k-1\), we have the disjoint decomposition
	\[
	\Omega_k^{k-1}=\bigsqcup_{r=1}^{k-1}\Omega_{k,r}^{k-1}
	=
	\bigsqcup_{r=1}^{k-1}\ \bigsqcup_{T\in\mathcal{F}_{k,r}^{k-1}} T^G.
	\]
\end{lem}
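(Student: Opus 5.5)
We have to show two things: every $W\in\Omega_k^{k-1}$ lies in the $G$-orbit of some $T(u_1,\dots,u_r)$ with $1\le r\le k-1$, and the sets $\Omega_{k,r}^{k-1}$ for different $r$ are disjoint. Once both are established, the second equality in the lemma is just the partition of each $\Omega_{k,r}^{k-1}$ into its $G$-orbits, because $\Omega_{k,r}^{k-1}$ is by definition a union of $G$-orbits.

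\emph{Covering.} Let $W\in\Omega_k^{k-1}$ and put $U=\langle W\rangle_{\GF(q^l)}$, so $\dim_{\GF(q^l)}U=k-1$.

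First choose a $\GF(q)$-basis $w_1,\dots,w_k$ of $W$. Since these span $U$ over $\GF(q^l)$, some $k-1$ of them, after reordering $w_1,\dots,w_{k-1}$, form a $\GF(q^l)$-basis of $U$.

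Next use $k-1\le m$ and the transitivity of $\GL(m,q^l)$ on ordered linearly independent $(k-1)$-tuples. This gives $g\in G$ with $g(w_i)=Y_i$ for $1\le i\le k-1$.

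Then $g(w_k)=\sum_{i=1}^{k-1}c_iY_i$ with $c_i\in\GF(q^l)$. Subtracting $\GF(q)$-multiples of $Y_i$ does not change the $\GF(q)$-span, so each $c_i$ only matters modulo $\GF(q)$.

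Let $r$ be the $\GF(q)$-dimension of $\langle c_1,\dots,c_{k-1}\rangle+\GF(q)$, modulo $\GF(q)$. Since $\dim_{\GF(q)}W=k$, the vector $g(w_k)$ is not in $\langle Y_1,\dots,Y_{k-1}\rangle$, so not all $c_i$ lie in $\GF(q)$; hence $r\ge1$, and clearly $r\le k-1$.

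Now we normalize. A matrix $P\in\GL(k-1,q)\le\GL(k-1,q^l)$, extended by the identity on $Y_k,\dots,Y_m$, permutes $\langle Y_1,\dots,Y_{k-1}\rangle_{\GF(q)}$. It replaces the coefficient vector $(c_i)$ by $P^{-1}(c_i)$, or by its transpose depending on convention. Reducing the $c_i$ modulo $\GF(q)$ and choosing $P$ by row reduction over $\GF(q)$, we can make the new coefficients $u_1,\dots,u_r,0,\dots,0$, possibly plus elements of $\GF(q)$, with $1,u_1,\dots,u_r$ linearly independent over $\GF(q)$. After a further translation the image of $W$ is exactly $T(u_1,\dots,u_r)$.

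\emph{Disjointness.} We attach to $W$ an invariant that recovers $r$. Set $W_0=W\cap\langle Y_1,\dots\rangle$; more intrinsically, consider the $\GF(q)$-span $\langle W\rangle_{\GF(q)}$ inside $U$. A $G$-invariant choice is
\[
r(W)=\dim_{\GF(q)}\bigl(\langle W\rangle_{\GF(q^l)}\bigr)\ \text{restricted appropriately},
\]
but the cleanest is the following. Let $W'$ be the largest subspace of $W$ that has a $\GF(q)$-basis which is also $\GF(q^l)$-independent, i.e. satisfies $\dim_{\GF(q)}W'=\dim_{\GF(q^l)}\langle W'\rangle_{\GF(q^l)}$. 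Equivalently, look at the $\GF(q)$-subspaces $X\le W$ of dimension $k-1$ with $X\in\Omega_{k-1}^{k-1}$; these are the hyperplanes of $W$ spanning $U$. For $T(u_1,\dots,u_r)$, a vector $\sum(a_i+bu_i)Y_i$ together with others fails to be $\GF(q^l)$-independent exactly according to how many $\GF(q)$-coordinates are affected.

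The cleaner invariant I would actually use is $s(W)=\dim_{\GF(q)}\{w\in W: \langle w\rangle_{\GF(q^l)}\cap W \ne \langle w\rangle_{\GF(q)}\}$-type data. The simplest is the count of $1$-subspaces $\langle w\rangle$ of $W$ with $\dim_{\GF(q)}(\GF(q^l)w\cap W)=2$. This count is $G$-invariant. For $T(u_1,\dots,u_r)$, the vectors whose $\GF(q^l)$-line meets $W$ in dimension $2$ are those supported on a single $Y_i$ with $i\le r$ when $r=1$; in general these counts depend on $r$. The main obstacle is choosing such an invariant so that the computation for $T(u_1,\dots,u_r)$ visibly distinguishes all $r$. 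I would try first the invariant
\[
r(W)=k-1-\dim_{\GF(q)}\bigl(W\cap \textstyle\sum\{\GF(q^l)w: w\in W,\ \GF(q^l)w\cap W=\GF(q)w\}\bigr)^{\perp},
\]
and, failing a clean formula, fall back on the direct statement: $r$ equals the $\GF(q)$-dimension of the image of $W$ in $U/\langle W''\rangle$, where $W''$ is the set of $w\in W$ lying in a $\GF(q^l)$-subspace spanned by $\GF(q)$-vectors of $W$. In $T(u_1,\dots,u_r)$ this recovers $r$ as the size of the minimal $\GF(q^l)$-support of the extra vector modulo changes by $\GF(q)$-combinations of the $Y_i$. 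That support size is invariant under $\GL(k-1,q)$ acting on the coefficients, and the stabilizer computation shows it is the only freedom.
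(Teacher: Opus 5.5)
\textbf{The covering half is fine, but disjointness is not proved.}

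Your covering argument is correct and follows the paper's route. You take $k-1$ vectors of $W$ that form a $\GF(q^l)$-basis of $\langle W\rangle_{\GF(q^l)}$, send them to $Y_1,\dots,Y_{k-1}$, and normalize the remaining vector. Your explicit $\GL(k-1,q)$ row reduction actually supplies the step the paper compresses into ``without loss of generality'': there, $\alpha_{r+1},\dots,\alpha_{k-1}$ are only known to lie in $\langle 1,\alpha_1,\dots,\alpha_r\rangle$, not in $\GF(q)$.

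The gap is the disjointness of the $\Omega_{k,r}^{k-1}$; none of your candidate invariants establishes it.
\begin{itemize}
\item The $\perp$-formula and the quotient by $\langle W''\rangle$ are not well defined as written. No form is specified, and literally $W''=W$, since each $w$ lies in $\GF(q^l)w$.
\item The one concrete invariant is the number $N(W)$ of points $\langle w\rangle\le W$ with $\dim_{\GF(q)}(\GF(q^l)w\cap W)=2$. It is $G$-invariant, but for $T(u_1,\dots,u_r)$ one computes $N=q+1$ when $r=1$ and $N=0$ for \emph{every} $r\ge 2$. Indeed, if $\lambda w\in W$ with $\lambda\notin\GF(q)$, then $w$ must be supported on $Y_1,\dots,Y_r$, and comparing coefficients forces a nontrivial $\GF(q)$-relation among $1,u_i,u_j$. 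So $N$ cannot distinguish $r=2$ from $r=3$ once $k\ge 4$.
\item Your fallback assumes that the only freedom between normal forms is $\GL(k-1,q)$ acting on the coefficient vector. That is exactly what needs proof, and it is false in general. The map $\mathrm{diag}(u^{-1},1,\dots,1)$ sends $T(u)$ to $T(u^{-1})$ and moves $\langle Y_1,\dots,Y_{k-1}\rangle_{\GF(q)}$ to a different hyperplane. This realizes a $\GF(q^l)^*$-rescaling of $\langle 1,u\rangle$ that is not a $\GL(k-1,q)$ change of coefficients when $u\notin\GF(q^2)$.
\end{itemize}
The paper itself only asserts this disjointness, so you are not missing an argument it contains, but your proposal does not supply one either.

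\textbf{How to close the gap.} Use the $\GF(q^l)$-linear relation among a $\GF(q)$-basis of $W$.
\begin{itemize}
\item For any $\GF(q)$-basis $w_1,\dots,w_k$ of $W$, the vectors $\gamma\in\GF(q^l)^k$ with $\sum_j\gamma_jw_j=0$ form the kernel of the surjection $\GF(q^l)^k\to\langle W\rangle_{\GF(q^l)}$. This kernel is one-dimensional over $\GF(q^l)$.
\item A change of basis $w'=wP$ with $P\in\GL(k,q)$ replaces $\gamma$ by $P^{-1}\gamma$, which has the same $\GF(q)$-span of entries. Rescaling by $\lambda\in\GF(q^l)^*$ multiplies that span by $\lambda$.
\item Hence $\rho(W):=\dim_{\GF(q)}\langle\gamma_1,\dots,\gamma_k\rangle$ depends only on $W$. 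It is $G$-invariant because every $g\in G$ is $\GF(q^l)$-linear.
\item For $T(u_1,\dots,u_r)$ with basis $Y_1,\dots,Y_{k-1},\sum_i u_iY_i$, the relation is $(u_1,\dots,u_r,0,\dots,0,-1)$. So $\rho=\dim\langle 1,u_1,\dots,u_r\rangle=r+1$.
\end{itemize}
Therefore $r$ is a $G$-invariant and the sets $\Omega_{k,r}^{k-1}$ are pairwise disjoint. Tracking the span itself, up to multiplication by $\GF(q^l)^*$, also gives the necessity half of Lemma~\ref{mapOur}.
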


\begin{proof}
	
	We only prove that every element of \(\Omega_k^{k-1}\) lies in one of these orbits. Let \(W\in\Omega_k^{k-1}\). Then $\dim_{\GF(q)}\left( W \right)=k $, \(\dim_{\GF(q^l)}\left( \left\langle W\right\rangle_{\GF(q^l)} \right) =k-1\). Choose \(z_1,\dots,z_{k-1}\in W\) such that $\left\langle z_1,\dots,z_{k-1}\right\rangle_{\GF(q^l)}=\left\langle W\right\rangle_{\GF(q^l)} $. Since \(\dim_{\GF(q)}W=k\), there exists $z\in W\setminus \langle z_1,\dots,z_{k-1}\rangle$, such that $\left\langle z_1,\dots,z_{k-1},z\right\rangle=W $. Write
	\[
	z=\alpha_1z_1+\cdots+\alpha_{k-1}z_{k-1},
	\]
	where $\alpha_i\in\GF(q^l)$. Since \(z\notin \langle z_1,\dots,z_{k-1}\rangle\), not all \(\alpha_i\) belong to \(\GF(q)\). Without loss of generality, assume that $\dim_{\GF(q)}\left( \left\langle 1,\alpha_1,\dots,\alpha_{k-1}\right\rangle \right)=r+1 $ and $\left\langle 1,\alpha_1,\dots,\alpha_{k-1}\right\rangle=\left\langle 1,\alpha_1,\dots,\alpha_{r}\right\rangle$, for some $1\leq r\leq k-1$.
	
	Since \(G=\GL(m,q^l)\), there exists \(g\in G\) such that
	\[
	g(\left\langle W\right\rangle_{\GF(q^l)})=\langle Y_1,\dots,Y_{k-1}\rangle_{\GF(q^l)}\ \text{and}\ g(z_i)=Y_i\ \text{for}\ 1\leq i\leq k-1.
	\]
	Moreover, $g(W)=T(\alpha_1,\dots,\alpha_r)\in \Omega_{k,r}^{k-1}$, i.e., $W\in T^G$ for some $T\in \mathcal F_{k,r}^{\,k-1}$.
	We can easily prove that $\Omega_{k,r}^{k-1}\cap \Omega_{k,r'}^{k-1}=\emptyset$ for any $r\neq r'$.  Hence the conclusion follows.
\end{proof}

\subsection{Relationship between $\mathcal F_{k,r}^{k-1}$ and $\mathcal O_{r+1}$}
In this subsection, we show that there is a one-to-one correspondence between $G$-orbits on $\Omega_{k,r}^{k-1}$ and $H$-orbits on $\left[ \GF(q)^l\atop r+1\right] $.

	\begin{lem}\label{mapOur}
	For $1\leq r\leq k-1$, let $\left\langle 1,u_1,\dots,u_r \right\rangle$ and $\left\langle 1,u_1^{\prime},\dots, u_r^{\prime} \right\rangle $ be two $(r+1)$-subspaces of $\mathrm{GF}(q^l)$ over $\GF(q)$. Then the $G$-orbit $T(u_1,\dots,u_r)^G$ contains $T(u_1^{\prime},\dots, u_r^{\prime})$ if and only if the $H$-orbit $\left\langle 1,u_1,\dots, u_r \right\rangle^H$ contains $\left\langle 1,u_1^{\prime},\dots, u_r^{\prime} \right\rangle$.
	\end{lem}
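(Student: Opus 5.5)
The plan is to attach to every $W\in\Omega_k^{k-1}$ an invariant $\mathcal I(W)$, which is an $H$-orbit of $\GF(q)$-subspaces of $\GF(q^l)$. I will show two things: $\mathcal I$ is constant on $G$-orbits, and $\mathcal I$ separates $G$-orbits. I will also check that $\mathcal I(T(u_1,\dots,u_r))=\langle 1,u_1,\dots,u_r\rangle^H$. Both directions of Lemma \ref{mapOur} then follow.

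\emph{Construction of the invariant.} Let $W\in\Omega_k^{k-1}$ and put $S=\langle W\rangle_{\GF(q^l)}$. Fix a $\GF(q)$-basis $w_1,\dots,w_k$ of $W$. Consider the $\GF(q^l)$-linear map
\[
\phi_w:\GF(q^l)^k\to S,\qquad (c_1,\dots,c_k)\mapsto \textstyle\sum_j c_jw_j .
\]
This map is surjective and $\dim_{\GF(q^l)}S=k-1$, so $\ker\phi_w$ is spanned by a single vector $\kappa=(\kappa_1,\dots,\kappa_k)$. Define $U(W,w,\kappa)=\langle\kappa_1,\dots,\kappa_k\rangle_{\GF(q)}\le\GF(q^l)$. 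This space depends on two choices, and both are harmless:
\begin{itemize}
\item Replacing $\kappa$ by $\lambda\kappa$ with $\lambda\in\GF(q^l)^*$ replaces $U$ by $\lambda U$, which is an $H$-translate.
\item Replacing the basis $w$ by $wP$ with $P\in\GL(k,q)$ replaces $\kappa$ by $P^{-1}\kappa$. Since $P$ has entries in $\GF(q)$, the $\GF(q)$-span of the entries is unchanged.
\end{itemize}
Hence $\mathcal I(W):=U^H$ is well defined.

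\emph{The invariant is constant on $G$-orbits.} For $g\in G$, the vectors $g(w_j)$ form a basis of $g(W)$. Since $g$ is $\GF(q^l)$-linear and injective, $\phi_{g(w)}=g\circ\phi_w$ has the same kernel. So $\mathcal I(g(W))=\mathcal I(W)$.

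\emph{Value on the representatives.} For $T(u_1,\dots,u_r)$ take the basis $Y_1,\dots,Y_{k-1},z$ with $z=\sum_{i\le r}u_iY_i$. The kernel is spanned by $(u_1,\dots,u_r,0,\dots,0,-1)$. Therefore $\mathcal I(T(u_1,\dots,u_r))=\langle 1,u_1,\dots,u_r\rangle^H$, and this gives the "only if" direction.

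\emph{The converse, which I expect to be the main step.} Assume $\langle 1,u'_1,\dots,u'_r\rangle=\lambda\langle 1,u_1,\dots,u_r\rangle$. Let $W=T(u)$ and $W'=T(u')$, with kernel vectors $\kappa,\kappa'$ as above. Then $\lambda\kappa$ and $\kappa'$ are $k$-tuples whose entries span the same $(r+1)$-space $U'$.

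Each tuple defines a surjective $\GF(q)$-linear map $\GF(q)^k\to U'$. Two surjections from $\GF(q)^k$ onto the same space differ by an automorphism of $\GF(q)^k$: choose bases of the two kernels, which have equal dimension, and lift a common complement. This gives $P\in\GL(k,q)$ with $\kappa'=P^{-1}(\lambda\kappa)$.

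Take the new basis $w''=wP$ of $W$. Its kernel line is spanned by $P^{-1}\kappa$, hence also by $\kappa'$, so it coincides with the kernel line for the basis $w'$ of $W'$. The maps $\phi_{w''}$ and $\phi_{w'}$ therefore induce $\GF(q^l)$-isomorphisms $\GF(q^l)^k/\langle\kappa'\rangle\cong S$ and $\cong S'$. Composing them gives a $\GF(q^l)$-linear isomorphism $S\to S'$ with $w''_j\mapsto w'_j$ for all $j$. Here $S=S'=\langle Y_1,\dots,Y_{k-1}\rangle_{\GF(q^l)}$.

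Extend this isomorphism by the identity on $\langle Y_k,\dots,Y_m\rangle_{\GF(q^l)}$ to obtain $g\in G$. It maps a basis of $W$ onto a basis of $W'$, so $g(W)=W'$. The delicate point is the matching of kernels up to a $\GF(q)$-basis change, which is exactly what the surjection argument supplies.
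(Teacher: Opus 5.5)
Your proof is correct, and it takes a genuinely different route from the paper.

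The paper works with explicit matrices in both directions. For necessity, it writes a general $g$ with $g(T(u_1,\dots,u_r))=T(u_1',\dots,u_r')$ in block form and shows that the entries $a_{ij}$ with $r+1\le i\le k-1$, $j\le r$ vanish. It then isolates the $r\times r$ block $D$ and uses a block-matrix identity to extract $\lambda=\alpha-\boldsymbol b^T\boldsymbol u\neq 0$ with $\lambda\langle 1,u_1',\dots,u_r'\rangle=\langle 1,u_1,\dots,u_r\rangle$. For sufficiency, it writes $\lambda(1,u_1',\dots,u_r')=(1,u_1,\dots,u_r)M$ with $M\in\GL(r+1,q)$. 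It then checks by hand that $g=\operatorname{diag}(D,I_{m-r})$, with $D=A-\boldsymbol c\boldsymbol u'^T$, is invertible and carries $T(u_1,\dots,u_r)$ onto $T(u_1',\dots,u_r')$.

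You instead define a $G$-invariant on all of $\Omega_k^{k-1}$: the $H$-orbit of the $\GF(q)$-span of the coordinates of the kernel line of $c\mapsto\sum_j c_jw_j$. Necessity then becomes a one-line invariance check. Sufficiency reduces to the standard fact that two surjections $\GF(q)^k\to U'$ differ by an element of $\GL(k,q)$, followed by the first isomorphism theorem.

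Your approach is shorter and coordinate-free, and it gives more:
\begin{itemize}
\item $\dim_{\GF(q)}U=r+1$ is a $G$-invariant. This proves the disjointness of the classes $\Omega_{k,r}^{k-1}$, which Lemma~\ref{Omega_k_k-1_orbits} asserts without proof.
\item The same set-up yields Lemma~\ref{G_Our}. Write $g(w)=wP_g$ and define $\lambda_g$ by $P_g^{-1}\kappa=\lambda_g\kappa$. Then $g\mapsto\lambda_g$ is a surjective homomorphism $G_W\to H_U$. Its kernel consists of the $P\in\GL(k,q)$ fixing $\kappa$, of which there are $\prod_{j=r+1}^{k-1}(q^k-q^j)$, combined with the free blocks $B,C$.
\end{itemize}
The paper's computation, in exchange, produces the explicit correspondence $D\leftrightarrow\lambda$, which it reuses almost verbatim in Claim~\ref{bij}.
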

	\begin{proof}
		First we prove the necessity. Let $g$ be an element of $G$ such that
		\begin{equation*}\label{eqgOr}
			g(T(u_1,\dots,u_r)) = T(u_1^{\prime},\dots,u_r^{\prime}).
		\end{equation*}
		From the definition of the $T(u_1,\dots,u_r)$ in \eqref{Tur}, $g$ has the form
			$$
		g = \left( \begin{array}{cc}\begin{array}{cccc}
				a_{1,1}+b_1u_1'   & \cdots & a_{1,k-1}+b_{k-1}u_1'\\
				\vdots & \vdots & \vdots \\
				a_{r,1}+b_1u_r'   & \cdots & a_{r,k-1}+b_{k-1}u_r'\\		
				a_{r+1,1}  & \cdots & a_{r+1,k-1} \\
				\vdots  & \vdots & \vdots \\
				a_{k-1,1}  & \cdots &  a_{k-1,k-1}
			\end{array} &B\\
			\boldsymbol{0} & C
			
		\end{array}  \right),
		$$
		where $a_{ij}, b_i \in \mathrm{GF}(q)$ for $1 \le i,j \le k-1$, $B \in M_{(k-1) \times (m-k+1)}(q^l)$, $C \in \mathrm{GL}(m-k+1,q^l)$. Then $$ \begin{aligned}
			&g(u_1 Y_1+\dots+ u_r Y_r)\\=& \sum_{i=1}^{r}\left(\sum_{j=1}^{r}(a_{ij}+b_ju_i')u_j \right)Y_i+\sum_{i=r+1}^{k-1}\left(\sum_{j=1}^{r}a_{ij}u_j \right)Y_i  .
		\end{aligned} $$ Since $g(u_1Y_1+\dots+ u_rY_r) \in T(u_1',\dots, u_r')$, we have
		$$
			\begin{aligned}
				&	\sum_{j=1}^{r}(a_{ij}+b_ju_i')u_j \in \langle 1, u_i'\rangle \text{ for } 1 \le i \le r;\\
				& \sum_{j=1}^{r}a_{ij}u_j\in \GF(q) \text{ for } r+1 \le i \le k-1.
			\end{aligned}
		$$
		Note that $1,u_1,\dots,u_r$ are linearly independent over $\GF(q)$,  so $a_{ij}=0$ for $r+1 \le i \le k-1, 1 \le j\le r$.
		
		Then
		$$g = \begin{pmatrix}
			\begin{matrix}
				D&E \\
				\boldsymbol{0}&F
			\end{matrix} & B \\
			\boldsymbol{0} & C
		\end{pmatrix},$$ where
		$$D = (a_{ij}+b_ju_i')_{1\leq i, j\leq r},\  E=(a_{ij}+b_iu_j')_{\substack{1\leq i\leq r\\ r+1\leq j\leq k-1}},\  F=(a_{ij})_{\substack{r+1\leq i,j\leq k-1\\ }} .$$
		Since $g\in G$, we have
		$\operatorname{det}(D) \ne 0$ and
		\[\begin{aligned}
			D: \left\langle Y_1,\dots,Y_{r},\sum_{i=1}^{r}u_iY_i\right\rangle \to \left\langle Y_1,\dots,Y_{r},\sum_{i=1}^{r}u_i'Y_i\right\rangle
		\end{aligned}
		\]
		is a $\GF(q^l)$-linear mapping.
		We claim that the action of \(D\) is invertible over $\GF(q)$. Note that the action of $D$ on $\left\langle Y_1,\dots,Y_r \right\rangle $ is invertible. We only need to prove that $D\Bigl(\sum_{j=1}^{r}u_jY_j\Bigr)$ cannot be linearly expressed by $D(Y_1),\dots,D(Y_r)$ over $\GF(q)$.
		Otherwise, there exist $c_1,\dots,c_r\in\GF(q)$ such that
		\[
		D\Bigl(\sum_{j=1}^{r}u_jY_j\Bigr)=\sum_{j=1}^{r}c_jD(Y_j).
		\]
		 As $D$ acts on $\left\langle Y_1,\dots,Y_r\right\rangle $ as an invertible $\GF(q^l)$-linear transformation obviously, we have $D(\sum_{j=1}^{r}\left( u_j-c_j\right) Y_j)=0$, i.e., $u_j=c_j\in\GF(q)$ for $1\leq j\leq r$, contradicting that $1,u_1,\dots,u_r$ are linearly independent over $\GF(q)$. Thus the claim is true.
		
		 Note that
		 \begin{align}
		 	&D(Y_j)=\sum_{i=1}^{r}\left( a_{ij}+b_ju_i'\right) Y_i\ \text{for}\ 1\leq j\leq r ,\label{zy1}\\
		 	&D\left( \sum_{i=1}^{r}u_iY_i\right) =\sum_{i=1}^{r}d_iY_i+\alpha\sum_{i=1}^{r}u_i'Y_i\ \text{for some}\ \alpha,d_1,\dots,d_r\in\GF(q).\label{zy2}
		 \end{align}
		  Set $$A=(a_{ij})_{r\times r}, \boldsymbol d=(d_1,\dots,d_r)^T,\boldsymbol b=(b_1,\dots,b_r)^T.$$
		Then we have
		\[D\left(Y_1,Y_2,\dots,Y_r,\sum_{i=1}^{r}u_iY_i \right) =\left(Y_1,Y_2,\dots,Y_r,\sum_{i=1}^{r}u_i'Y_i \right)\begin{pmatrix}
		A&\boldsymbol d\\
		\boldsymbol b^T&\alpha
		\end{pmatrix}.
		\]
		Hence, $\begin{pmatrix}
			A&\boldsymbol d\\
			\boldsymbol b^T&\alpha
		\end{pmatrix}\in\GL(r+1,q)$ by the above claim. Denote
		\begin{equation}\label{uu}
		\boldsymbol u=(u_1,\dots,u_r)^T,\
		\boldsymbol u'=(u_1',\dots,u_r')^T.
		\end{equation}
	
		From \eqref{zy1}, we have
		\[D\left( \sum_{i=1}^{r}u_iY_i\right) =\sum_{i=1}^{r}\left(\sum_{j=1}^{r}(a_{ij}+b_ju_i')u_j \right)Y_i.
		\]
		Comparing the coefficients of $Y_1,\dots,Y_r$ with \eqref{zy2}, we obtain
		\[
		A\boldsymbol u+(\boldsymbol b^T\boldsymbol u)\boldsymbol u'
		=
		\boldsymbol d+\alpha \boldsymbol u',
		\]
		that is,
		\begin{equation}\label{eq:Au-d}
		A\boldsymbol u-\boldsymbol d
		=
		\lambda\boldsymbol u',\ \text{where}\ \lambda=\alpha-\boldsymbol b^T\boldsymbol u.
		\end{equation}
		Then, using \eqref{eq:Au-d}, we have the identity
		\[
		\begin{pmatrix}
		1 & \boldsymbol{0}\\
		\boldsymbol u' & I_r
		\end{pmatrix}
		\begin{pmatrix}
		\alpha & \boldsymbol b^T\\
		\boldsymbol d & A
		\end{pmatrix}
		\begin{pmatrix}
		1 & \boldsymbol{0}\\
		-\boldsymbol u & I_r
		\end{pmatrix}
		=
		\begin{pmatrix}
		\lambda & \boldsymbol b^T\\
		\boldsymbol{0} & A+\boldsymbol u'\boldsymbol b^T
		\end{pmatrix}.
		\]
		The left-hand side is invertible, hence so is the right-hand side. In particular, $\lambda\neq 0.$
		Moreover, \eqref{eq:Au-d} shows that
		\[
		\lambda,\lambda u_i' \in \langle 1,u_1,\dots,u_r\rangle\ \text{for}\ 1\leq i\leq r.
		\]
		It follows that
		\[
		\lambda\langle 1,u_1',\dots,u_r'\rangle
		=
		\langle 1,u_1,\dots,u_r\rangle.
		\]
		Therefore the $H$-orbit $\langle 1,u_1,\dots,u_r\rangle^H$ contains
		$\langle 1,u_1',\dots,u_r'\rangle$.

	    We next prove the sufficiency. Assume that the $H$-orbit
		$\langle 1,u_1,\dots,u_r\rangle^H$ contains
		$\langle 1,u_1',\dots,u_r'\rangle$. Then there exists $w\in \GF(q^l)^*$ such that
		\[
		w \langle 1,u_1,\dots,u_r\rangle
		=
		\langle 1,u_1',\dots,u_r'\rangle.
		\]
		Let $\lambda:=w^{-1}$. Then
		\[
		\lambda\langle 1,u_1',\dots,u_r'\rangle
		=
		\langle 1,u_1,\dots,u_r\rangle.
		\]
		Hence the multiplication by $\lambda$ induces a $\GF(q)$-linear isomorphism from
		$\langle 1,u_1',\dots,u_r'\rangle$ onto $\langle 1,u_1,\dots,u_r\rangle$.
		Then
		\[
		\lambda(1,u_1',\dots,u_r')= (1,u_1,\dots,u_r)M\ \text{for some}\
		M=
		\begin{pmatrix}
			\alpha & \boldsymbol e^T\\
			\boldsymbol c & A
		\end{pmatrix}
		\in \GL(r+1,q),
		\]
		where $\alpha\in \GF(q)$, $\boldsymbol c=(c_1,\dots,c_r)^T\in \GF(q)^r$,
		$\boldsymbol e=(e_1,\dots,e_r)^T\in \GF(q)^r$, and $A=(a_{ij})\in M_{r\times r}(q)$. In other words, using \eqref{uu} we have
		\begin{equation}\label{eq:lambdau}
			\lambda=\alpha+\boldsymbol u^T \boldsymbol c,\
			\lambda\boldsymbol u'^T=\boldsymbol e^T+\boldsymbol u^T A.
		\end{equation}
		
		Define
		\begin{equation}\label{DDD}
		D:=A-\boldsymbol c\boldsymbol u'^T\in M_r(q^l).
		\end{equation}
		Then, by \eqref{eq:lambdau} we have
		\[
		\begin{pmatrix}
			1 & \boldsymbol u^T\\
		\boldsymbol{0}	 & I_r
		\end{pmatrix}
		\begin{pmatrix}
			\alpha & \boldsymbol e^T\\
			\boldsymbol c & A
		\end{pmatrix}
		\begin{pmatrix}
			1 & -\boldsymbol u'^T\\
		\boldsymbol{0}	 & I_r
		\end{pmatrix}
		=
		\begin{pmatrix}
			\lambda & \boldsymbol{0}\\
			\boldsymbol c & D
		\end{pmatrix}.
		\]
		Since the left-hand side is invertible and $\lambda\neq 0$, we conclude that
		$D\in \GL(r,q^l)$.
		
		Now define
		\[
		g=\begin{pmatrix}
			D & \boldsymbol{0}\\
			\boldsymbol{0} & I_{m-r}
		\end{pmatrix}.
		\]
		Obviously, $g\in G$ and
		\[
		g(Y_j)=\sum_{i=1}^r\left( a_{ij}-c_iu_j'\right) Y_i \in T(u_1',\dots,u_r')\ \text{for}\ 1\le j\le r.
		\]
		Moreover, from \eqref{eq:lambdau} and \eqref{DDD},
		\[
		\boldsymbol e^T+\boldsymbol u^T D
		=
		\boldsymbol e^T+\boldsymbol u^T A-\boldsymbol u^T\boldsymbol c\boldsymbol u'^T
		=
		\alpha\boldsymbol u'^T,
		\]
		and therefore
		\[
		\begin{aligned}
			g\left( \sum_{i=1}^r u_iY_i\right)
			&=\sum_{i=1}^{r}u_ig(Y_i)\\
			&=\sum_{i=1}^r\left(\sum_{j=1}^r (a_{ij}-c_iu_j')u_j \right)Y_i \\
			&= \sum_{i=1}^r (-e_i+\alpha u_i')Y_i
			\in T(u_1',\dots,u_r').
		\end{aligned}
		\]
		Also note that
		\[
		g(Y_i)=Y_i\in T(u_1',\dots,u_r')\ \text{for}\ r+1\le i\le k-1.
		\]
		Hence
		\[
		g\bigl(T(u_1,\dots,u_r)\bigr)= T(u_1',\dots,u_r').
		\]
		Therefore, the $G$-orbit $T(u_1,\dots,u_r)^G$ contains $T(u_1',\dots,u_r')$.
	\end{proof}
	
    \begin{cor}\label{num}
    	For $1\leq r\leq k-1$, the number of $G$-orbits on $\Omega_{k,r}^{k-1}$ equals
   \[
   \lvert \mathcal{F}_{k,r}^{k-1}\rvert = n_{r+1}^l.
   \]
   In particular, given a positive integer $u\mid \gcd(d,l)$, the number of \(G\)-orbits containing some $T(u_1,\dots,u_r)\in \Omega_{k,r}^{k-1}$ with $H_{\left\langle 1,u_1\dots,u_r\right\rangle }\cong\GF(q^u)^*$ equals \(n_{r+1,u}^l\).
   \end{cor}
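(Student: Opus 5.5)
The plan is to build an explicit bijection between the set of $G$-orbits on $\Omega_{k,r}^{k-1}$ and the set of $H$-orbits on $\left[ \GF(q)^l\atop r+1\right]$ containing the element $1$. We then check that every $H$-orbit on $(r+1)$-subspaces meets that family, so the count is $|\mathcal O_{r+1}|=n_{r+1}^l$.

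\emph{Definition of the map.} By the definition of $\Omega_{k,r}^{k-1}$, every $G$-orbit in it has the form $T(u_1,\dots,u_r)^G$ with $1,u_1,\dots,u_r$ linearly independent over $\GF(q)$. Send this orbit to the $H$-orbit $\langle 1,u_1,\dots,u_r\rangle^H$. Lemma~\ref{mapOur} gives both directions we need.
\begin{itemize}
\item[$\Rightarrow$] If $T(u)$ and $T(u')$ lie in the same $G$-orbit, then the corresponding $(r+1)$-subspaces lie in the same $H$-orbit. So the map is well defined.
\item[$\Leftarrow$] If the $H$-orbits agree, then the $G$-orbits agree. So the map is injective.
\end{itemize}

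\emph{Surjectivity.} Let $S$ be any $(r+1)$-subspace of $\GF(q^l)$, and pick a nonzero $s\in S$. Then $s^{-1}S$ lies in $S^H$ and contains $1$. Choosing a basis of $s^{-1}S$ of the form $1,u_1,\dots,u_r$ gives an element $T(u_1,\dots,u_r)\in\Omega_{k,r}^{k-1}$ whose image is $S^H$. Note that $r+1\le k\le l$, so such subspaces exist. Hence $|\mathcal F_{k,r}^{k-1}|=n_{r+1}^l$ by Theorem~\ref{sin}(ii).

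\emph{Refined count.} The bijection sends $T(u_1,\dots,u_r)^G$ to the orbit of $\langle 1,u_1,\dots,u_r\rangle$. All members of an $H$-orbit have conjugate, hence equal, stabilizers in the abelian group $H$. So the stabilizer type $\GF(q^u)^*$ is an invariant of the $H$-orbit. The stabilizer type also does not depend on which representative $T(u')$ of the $G$-orbit we take, because by Lemma~\ref{mapOur} the associated subspaces are $H$-equivalent. Restricting the bijection to orbits of stabilizer type $\GF(q^u)^*$, where $u\mid\gcd(r+1,l)$ (the statement's $d$ should be read as $r+1$), gives the count $n_{r+1,u}^l$ by Theorem~\ref{sin}(iii).

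\emph{Main obstacle.} The only delicate point is well-definedness. A $G$-orbit might contain $T(u)$ and $T(u')$ for different tuples, and possibly tuples with a different $r$. The disjointness asserted in Lemma~\ref{Omega_k_k-1_orbits} rules out a change of $r$. Within a fixed $r$, the necessity direction of Lemma~\ref{mapOur} handles the remaining case, so the argument goes through directly.
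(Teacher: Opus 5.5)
Your argument is correct and matches the paper's, whose proof simply says the corollary follows from Theorem~\ref{sin} and Lemma~\ref{mapOur}; you make explicit what the paper leaves implicit (well-definedness and injectivity from the two directions of Lemma~\ref{mapOur}, surjectivity by rescaling an $(r+1)$-subspace to contain $1$, and constancy of the stabilizer along an $H$-orbit because $H$ is abelian), and reading $d$ as $r+1$ is right. Your appeal to the disjointness of the classes $\Omega_{k,r}^{k-1}$ is harmless but unnecessary, since your map only ever compares representatives with the same $r$.
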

    \begin{proof}
    This conclusion follows directly from Theorem \ref{sin} and Lemma \ref{mapOur}.
    \end{proof}

\subsection{The stabilizer of $T(u_1,\dots,u_r)$ in $G$}

\begin{lem}\label{vvv}
	Let $1,u_1,\dots,u_r$ be elements of $\GF(q^l)$ which are linearly independent over $\GF(q)$. Define an $r\times s$ matrix
	\[
	g=(a_{ij}+b_j u_i)_{r\times s},
	\]
	where $a_{ij},b_j\in \GF(q)$ and $1\le s\le r$. For each $1\le j\le s$, define
	\[
	\boldsymbol v_j:=(b_j,a_{1j},a_{2j},\dots,a_{rj})\in \GF(q)^{r+1}.
	\]
	Then the $s$ columns of $g$ are linearly independent over $\GF(q^l)$ if and only if the vectors $\boldsymbol v_1,\dots,\boldsymbol v_s$ are linearly independent over $\GF(q)$.
\end{lem}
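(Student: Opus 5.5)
Write $\boldsymbol u=(u_1,\dots,u_r)^T$ and let $M$ be the $(r+1)\times s$ matrix over $\GF(q)$ whose $j$-th column is $\boldsymbol v_j^T=(b_j,a_{1j},\dots,a_{rj})^T$. The plan is to factor $g$ through $M$. Since the $(i,j)$-entry of $g$ is $u_ib_j+a_{ij}$, we have
\[
g=\begin{pmatrix}\boldsymbol u & I_r\end{pmatrix} M,
\]
where $P:=\begin{pmatrix}\boldsymbol u & I_r\end{pmatrix}$ is an $r\times(r+1)$ matrix over $\GF(q^l)$. The matrix $P$ has rank $r$, so its kernel in $\GF(q^l)^{r+1}$ is the line spanned by $\boldsymbol w:=(1,-u_1,\dots,-u_r)^T$. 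Linear dependence of the columns of $g$ over $\GF(q^l)$ then becomes a statement about $M$ and $\boldsymbol w$.

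\emph{Necessity ($\Rightarrow$, by contrapositive).} Suppose $\sum_j c_j\boldsymbol v_j=0$ with $c_j\in\GF(q)$ not all zero. Then $M\boldsymbol c=0$, so $g\boldsymbol c=PM\boldsymbol c=0$. This is a nontrivial dependence among the columns of $g$ with coefficients in $\GF(q)\subseteq\GF(q^l)$.

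\emph{Sufficiency ($\Leftarrow$).} Assume $\boldsymbol v_1,\dots,\boldsymbol v_s$ are independent over $\GF(q)$, so $M$ has rank $s$ over $\GF(q)$. Rank does not change under field extension, so $M$ is injective on $\GF(q^l)^s$. Suppose $g\boldsymbol x=0$ for some nonzero $\boldsymbol x\in\GF(q^l)^s$. Then $M\boldsymbol x\in\ker P$, so $M\boldsymbol x=c\boldsymbol w$ for some $c\in\GF(q^l)$. Injectivity of $M$ forces $M\boldsymbol x\neq0$, hence $c\neq 0$. Therefore $\boldsymbol w$ lies in the $\GF(q^l)$-column space of $M$.

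The key step is to rule this out using the fact that $M$ has entries in $\GF(q)$. The $\GF(q)$-column space $C$ of $M$ has dimension $s\le r<r+1$. Hence there is a nonzero vector $\boldsymbol\varphi=(\varphi_0,\varphi_1,\dots,\varphi_r)\in\GF(q)^{r+1}$ with $\boldsymbol\varphi M=0$. Consequently $\boldsymbol\varphi$ annihilates the $\GF(q^l)$-span of $C$, and in particular $\boldsymbol\varphi\boldsymbol w=0$, that is,
\[
\varphi_0-\varphi_1u_1-\cdots-\varphi_ru_r=0 .
\]
This is a nontrivial $\GF(q)$-linear relation among $1,u_1,\dots,u_r$, contradicting their independence over $\GF(q)$. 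So no such $\boldsymbol x$ exists, and the columns of $g$ are independent over $\GF(q^l)$.

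The main point needing care is this last step. We must use $s\le r$ to produce the annihilating functional $\boldsymbol\varphi$, and we must note that $\boldsymbol\varphi$ is defined over $\GF(q)$, since that is what turns $\boldsymbol\varphi\boldsymbol w=0$ into a $\GF(q)$-relation among $1,u_1,\dots,u_r$. This mirrors the block-matrix argument in the proof of Lemma~\ref{mapOur}.
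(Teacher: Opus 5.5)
Your proof is correct. The necessity direction is the same as the paper's, but your sufficiency argument takes a different route.

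\textbf{The paper's route.} The paper works with minors. Assuming the columns of $g$ are dependent, it expands each $s\times s$ minor of $g$ on a row set $I=\{i_1,\dots,i_s\}$ by multilinearity as $\det\bigl(a_{i_k j}\bigr)_{1\le k,j\le s}+\sum_{k}u_{i_k}\Delta_k$. Here $\Delta_k\in\GF(q)$ is the determinant with row $i_k$ replaced by $(b_1,\dots,b_s)$; terms with two such rows vanish. The independence of $1,u_{i_1},\dots,u_{i_s}$ then forces every coefficient to be zero. As $I$ varies, these coefficients run over all $s\times s$ minors of $M$, so $M$ has rank less than $s$.

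\textbf{Your route.} You factor $g=PM$, identify $\ker P=\langle\boldsymbol w\rangle$, and show that the $\GF(q^l)$-span of the $\GF(q)$-column space of $M$ cannot contain $\boldsymbol w$, since that column space has dimension $s\le r$. The tool is a rational annihilator $\boldsymbol\varphi$.

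\textbf{What each buys.} Your version is more conceptual and makes the role of $s\le r$ explicit: it is exactly what produces $\boldsymbol\varphi$. In the paper this hypothesis enters only tacitly, to ensure that each minor of $M$ involving the $b$-row occurs inside some $s$-subset $I$ of $\{1,\dots,r\}$. The same argument also gives $\operatorname{rank}_{\GF(q^l)}g=\operatorname{rank}_{\GF(q)}M$ for every $M$ when $s\le r$, not just in the full-rank case.

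The paper's computation is purely determinantal and gives an explicit expression of each minor of $g$ in terms of minors of $M$. It also needs only the independence of each subfamily $1,u_{i_1},\dots,u_{i_s}$, rather than of all of $1,u_1,\dots,u_r$.
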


\begin{proof}
	
	We first prove the necessity. Suppose on the contrary that $\boldsymbol v_1,\dots,\boldsymbol v_s$ are linearly dependent over $\GF(q)$. Then there exist $\lambda_1,\dots,\lambda_s\in \GF(q)$, not all zero, such that
	\[
	\sum_{j=1}^s \lambda_j \boldsymbol v_j=\boldsymbol 0.
	\]
	Comparing coordinates yields
	\[
	\sum_{j=1}^s \lambda_j b_j=0
	\ \text{and}\
	\sum_{j=1}^s \lambda_j a_{ij}=0
	\ (1\le i\le r).
	\]
	Hence, for each $1\le i\le r$,
	\[
	\sum_{j=1}^s \lambda_j(a_{ij}+b_j u_i)
	=
	\sum_{j=1}^s \lambda_j a_{ij}
	+
	u_i\sum_{j=1}^s \lambda_j b_j
	=0.
	\]
	Therefore the columns of $g$ are linearly dependent over $\GF(q^l)$, a contradiction.
	
	We next prove the sufficiency. Assume that $\boldsymbol v_1,\dots,\boldsymbol v_s$ are linearly independent over $\GF(q)$.
	Suppose on the contrary that the columns of $g$ are linearly dependent over $\GF(q^l)$ and hence every $s\times s$ minor of $g$ is zero. Let
	\[
	I=\{i_1,\dots,i_s\}\subseteq \{1,\dots,r\},\ i_1<\cdots<i_s.
	\]
	Consider the $s\times s$ submatrix of $g$ formed by the rows indexed by $I$. Its determinant is
    \[\begin{aligned}
    	 0&=\begin{vmatrix}
    		a_{i_1,1}+b_{1}u_{i_1}&\cdots &a_{i_1,s}+b_su_{i_1}\\
    		\vdots &\vdots &\vdots\\
    		a_{i_{s},1}+b_1u_{i_{s}}&\cdots&a_{i_{s},s}+b_su_{i_{s}}
    	\end{vmatrix}\\
    	&=\begin{vmatrix}
    		a_{i_1,1}&\cdots &a_{i_1,s}\\
    		\vdots &\vdots &\vdots\\
    		a_{i_{s},1}&\cdots&a_{i_{s},s}
    	\end{vmatrix}+
    	\begin{vmatrix}
    		b_1 &\cdots & b_s\\
    		a_{i_2,1}&\cdots &a_{i_2,s}\\
    		\vdots &\vdots &\vdots\\
    		a_{i_{s},1}&\cdots&a_{i_{s},s}
    	\end{vmatrix}u_{i_1}+\cdots+
    \begin{vmatrix}
    	a_{i_1,1}&\cdots &a_{i_1,s}\\
    	\vdots &\vdots &\vdots\\
    	a_{i_{s-1},1}&\cdots &a_{i_{s-1},s}\\
    	b_1&\cdots&b_s
    	\end{vmatrix}u_{i_s}.
    \end{aligned}
    \]
Indeed, since $1,u_{i_1}\dots,u_{i_s}$ are linearly independent over $\GF(q)$, it follows that
\[\begin{aligned}
	0&=\begin{vmatrix}
		a_{i_1,1}&\cdots &a_{i_1,s}\\
		\vdots &\vdots &\vdots\\
		a_{i_{s},1}&\cdots&a_{i_{s},s}
	\end{vmatrix}=\begin{vmatrix}
		b_1 &\cdots & b_s\\
		a_{i_2,1}&\cdots &a_{i_2,s}\\
		\vdots &\vdots &\vdots\\
		a_{i_{s},1}&\cdots&a_{i_{s},s}
	\end{vmatrix}
	=\cdots=\begin{vmatrix}
	a_{i_1,1}&\cdots &a_{i_1,s}\\
	\vdots &\vdots &\vdots\\
	a_{i_{s-1},1}&\cdots &a_{i_{s-1},s}\\
	b_1&\cdots&b_s
	\end{vmatrix}.
\end{aligned}
\]
The equalities above show that every $s\times s$ minor of matrix $(\boldsymbol v_1^T,\dots,\boldsymbol v_s^T)$ is zero. Hence $\boldsymbol v_1,\dots,\boldsymbol v_s$ are linearly dependent over $\GF(q)$, a contradiction.
\end{proof}

\begin{cor}\label{cla}
	Let $1,u_1,\dots,u_r$ be elements of $\GF(q^l)$ which are linearly independent over $\GF(q)$. Then the matrix $g=(a_{ij}+b_ju_i)_{r\times r}$ is invertible, where $a_{ij},b_j\in\GF(q)$ if and only if the vectors $\boldsymbol v_1,\dots,\boldsymbol v_r$ are linearly independent over $\GF(q)$, where $\boldsymbol v_j=(b_j,a_{1j},\dots,a_{rj})$, $1\leq i,j\leq r$.
\end{cor}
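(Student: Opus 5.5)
This is the square case $s=r$ of Lemma \ref{vvv}, so the plan is to specialize that lemma and then translate ``columns linearly independent'' into ``invertible''.

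First, I apply Lemma \ref{vvv} with $s=r$. The hypotheses match exactly: $1,u_1,\dots,u_r$ are linearly independent over $\GF(q)$, and $g=(a_{ij}+b_ju_i)_{r\times r}$ with $a_{ij},b_j\in\GF(q)$. The lemma then gives that the $r$ columns of $g$ are linearly independent over $\GF(q^l)$ if and only if $\boldsymbol v_1,\dots,\boldsymbol v_r\in\GF(q)^{r+1}$ are linearly independent over $\GF(q)$.

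Second, $g$ is an $r\times r$ matrix over the field $\GF(q^l)$. Such a matrix is invertible, i.e.\ lies in $\GL(r,q^l)$ and $\det g\neq 0$, exactly when its $r$ columns are linearly independent over $\GF(q^l)$. Combining this with the first step gives the stated equivalence.

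There is essentially no obstacle; all the substance sits in Lemma \ref{vvv}. The only point to check is that the case $s=r$ is allowed by the lemma's range $1\le s\le r$, which it is. In this case the minors argument in the lemma's sufficiency part uses the single index set $I=\{1,\dots,r\}$, so nothing degenerates.
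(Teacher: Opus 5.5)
Your proposal is correct and matches the paper: the corollary is stated there without a separate proof, as the case $s=r$ of Lemma \ref{vvv}. Your only added step is the standard fact that a square matrix over $\GF(q^l)$ is invertible exactly when its columns are linearly independent over $\GF(q^l)$.
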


	\begin{lem}\label{G_Our}
		For $1\leq r\leq k-1$, let $\left\langle 1,u_1,\dots,u_r \right\rangle$ be an $(r+1)$-subspace of $\mathrm{GF}(q^l)$. Then the size of the stabilizer of $T(u_1,\dots,u_r)$ in $G$ is
		$$\Big|G_{T(u_1,\dots, u_r)} \Big| =
		\left| H_{\left\langle 1,u_1,\dots,u_r\right\rangle }\right|  \prod_{i=r+1}^{k-1}(q^k-q^i) \prod_{i=k-1}^{m-1}(q^{ml}-q^{il}).
		$$
	\end{lem}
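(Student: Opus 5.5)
The plan is to count the stabilizer directly from the block form of an element $g\in G_T$, where $T=T(u_1,\dots,u_r)$, already derived in the necessity part of the proof of Lemma \ref{mapOur} (taking $u'=u$). Any $g\in G_T$ fixes $\langle T\rangle_{\GF(q^l)}=\langle Y_1,\dots,Y_{k-1}\rangle_{\GF(q^l)}$. Hence $g$ is block upper triangular, $g=\begin{pmatrix} X & B\\ \boldsymbol 0 & C\end{pmatrix}$ with $X\in\GL(k-1,q^l)$, $B$ arbitrary and $C\in\GL(m-k+1,q^l)$. The pair $(B,C)$ is completely free, since only $X$ influences the action on $T$. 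The free choices of $B$ and $C$ contribute
\[
q^{l(k-1)(m-k+1)}\prod_{i=0}^{m-k}(q^{(m-k+1)l}-q^{il})=\prod_{i=k-1}^{m-1}(q^{ml}-q^{il}),
\]
and this is exactly the last factor in the statement. So the task reduces to counting the $X\in\GL(k-1,q^l)$ that map $T$ onto itself.

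By the computation in Lemma \ref{mapOur}, such an $X$ has the form $\begin{pmatrix} D & E\\ \boldsymbol 0 & F\end{pmatrix}$. Here $D=(a_{ij}+b_ju_i)_{r\times r}$, $E=(a_{ij}+b_ju_i)$ for $i\le r<j$, and $F=(a_{ij})$ with entries in $\GF(q)$. I must redo that derivation carefully: the columns $j>r$ map $Y_j$ into $T$, so their first $r$ coordinates have the form $a_{ij}+b_ju_i$, their lower coordinates lie in $\GF(q)$, and they carry a common scalar $b_j$. Then $X$ is invertible iff $D$ and $F$ are invertible.

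I will count in three parts. First, $F\in\GL(k-1-r,q)$ is free. Second, given $D$, the entries of the columns $j>r$ of $E$ are determined by free parameters $(b_j,a_{1j},\dots,a_{rj})\in\GF(q)^{r+1}$, one vector per column. Combining $F$ with these columns gives, column by column, vectors in $\GF(q)^{k}$ that must be independent modulo the span contributed by the $D$-part. This is where the factor $\prod_{i=r+1}^{k-1}(q^k-q^i)$ should emerge. Concretely, I would identify the action of $X$ on $T\cong\GF(q)^k$, with basis $Y_1,\dots,Y_{k-1},\sum u_iY_i$, as a $\GF(q)$-linear automorphism. The map $X\mapsto X|_T$ is injective because $T$ spans over $\GF(q^l)$. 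Choosing the images of $Y_{r+1},\dots,Y_{k-1}$ one at a time as vectors of $T$ outside the span of the previously fixed $(r+1)$-dimensional subspace $T_0=\langle Y_1,\dots,Y_r,\sum u_iY_i\rangle$ and the earlier images gives $q^k-q^{r+1},\dots,q^k-q^{k-1}$ choices. I need Lemma \ref{vvv} and Corollary \ref{cla} to check that every such choice yields a $\GF(q^l)$-invertible $X$. The subtle point is that $X$ must map $T_0$ to itself, since $T_0=T\cap\langle Y_1,\dots,Y_r\rangle_{\GF(q^l)}$ and that $\GF(q^l)$-span is preserved by block triangularity. Invertibility then follows from independence modulo $T_0$, because the lower coordinates are given by $F$.

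Third, it remains to count the $D\in\GL(r,q^l)$ with $D(T_0)=T_0$, and I expect this to be the main obstacle: I need to show it equals $|H_{\langle 1,u_1,\dots,u_r\rangle}|$. The sufficiency part of Lemma \ref{mapOur} with $u'=u$ builds, from each $\lambda$ satisfying $\lambda U=U$, where $U=\langle 1,u_1,\dots,u_r\rangle$, an element $D$. The necessity part conversely assigns to each stabilizing $D$ a scalar $\lambda$ with $\lambda U=U$. The key claim is that $D$ is determined by $\lambda$. From \eqref{eq:Au-d}, $\lambda$ and $\lambda u_i$ lie in $U$, and their coordinates in the basis $1,u_1,\dots,u_r$ determine $\alpha-\boldsymbol b^T\boldsymbol u$, $A$ and $\boldsymbol d$. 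I would still have to pin down $\boldsymbol b$ and show $D=A+\boldsymbol u\boldsymbol b^T$ is then determined, most likely by showing that $D$ acts on $T_0$ via the matrix $\begin{pmatrix}A&\boldsymbol d\\ \boldsymbol b^T&\alpha\end{pmatrix}$, which is conjugate to multiplication by $\lambda$ on $U$. Once this bijection between stabilizing $D$ and $H_U$ is established, multiplying the three counts gives the stated formula.
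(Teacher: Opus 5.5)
Your decomposition is the paper's. $B$ and $C$ are free, and $X=\begin{pmatrix}D&E\\ \boldsymbol 0&F\end{pmatrix}$. Your count of admissible $(E,F)$, as images of $Y_{r+1},\dots,Y_{k-1}$ in $T$ independent modulo $T_0$, gives the paper's $q^{(r+1)(k-r-1)}|\GL(k-r-1,q)|=\prod_{i=r+1}^{k-1}(q^k-q^i)$. What remains is exactly Claim~\ref{bij}, the bijection between stabilizing $D$ and $H_{\langle 1,u_1,\dots,u_r\rangle}$. You leave that step open, and two points in your sketch need correcting.

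\emph{Injectivity is not an obstacle.} Similarity of the action matrix to multiplication by $\lambda$ would not pin down $\boldsymbol b$, since a matrix is not determined by its similarity class. No such argument is needed. Since $\alpha,b_1,\dots,b_r\in\GF(q)$ and $1,u_1,\dots,u_r$ are independent, the relation $\lambda=\alpha-\boldsymbol b^T\boldsymbol u$ says the coordinate vector of $\lambda$ in the basis $1,u_1,\dots,u_r$ is $(\alpha,-b_1,\dots,-b_r)$. So $\lambda$ alone gives $\boldsymbol b$. The coordinates of the $\lambda u_i$ give $A$ and $\boldsymbol d$ by \eqref{eq:Au-d}. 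Hence $D=A+\boldsymbol u\boldsymbol b^T$ is determined.

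\emph{For surjectivity, do not import the construction from the sufficiency half of Lemma~\ref{mapOur}.} The paper's convention is that column $j$ of a matrix is the coordinate vector of the image of $Y_j$. Under it, the matrix $A-\boldsymbol c\boldsymbol u'^T$ written there has $(i,j)$-entry $a_{ij}-c_iu'_j$, which is not of the form $a'_{ij}+b'_ju'_i$. For $r\ge 2$ and $\lambda\notin\GF(q)$ it does not send $Y_j$ into $T$; its transpose does. You would also still have to check that it maps back to $\lambda$.

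Instead, reverse the coordinate reading, as the paper's map $\chi$ does. For $\lambda\in H_{\langle 1,u_1,\dots,u_r\rangle}$, write $\lambda=\alpha-\sum_j b_ju_j$ and $\lambda u_i=-d_i+\sum_j a_{ij}u_j$ with all coefficients in $\GF(q)$, and put $D=A+\boldsymbol u\boldsymbol b^T$.
\begin{itemize}
\item $D(Y_j)\in T_0$ by the shape of its columns.
\item A direct computation gives $D\bigl(\sum_j u_jY_j\bigr)=\sum_i(d_i+\alpha u_i)Y_i\in T_0$, so $D$ maps back to $\lambda$.
\item The matrix identity in the necessity half of Lemma~\ref{mapOur} gives $\lambda\det D=\det\begin{pmatrix}\alpha&\boldsymbol b^T\\ \boldsymbol d&A\end{pmatrix}$.
\item This determinant is nonzero. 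Up to transposition and conjugation by $\operatorname{diag}(1,-1,\dots,-1)$, that matrix is the matrix of multiplication by $\lambda$ on $\langle 1,u_1,\dots,u_r\rangle$, which is invertible.
\end{itemize}
That last step is where your conjugacy observation actually belongs; the paper gets $\det D\neq 0$ from Corollary~\ref{cla} instead. With both directions in place the map is a bijection, and the stated formula follows.
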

	\begin{proof}
		By the similar argument to that in the proof of Lemma~\ref{mapOur}, an element $g$ of $G_{T(u_1,\dots, u_r)}$ can be written as
		$$
		g = \begin{pmatrix}
			A & B\\
			\boldsymbol{0} & C
			\end{pmatrix},
		$$
		where $$A =\left(  \begin{matrix}
			a_{1,1}+b_1u_1 & \cdots&a_{1,r}+b_ru_1&a_{1,r+1}+b_{r+1}u_1 &\cdots& a_{1,k-1}+b_{k-1}u_1 &\\
			\vdots & \vdots&\vdots&\vdots & \vdots &\vdots  \\
			a_{r,1}+b_1u_r & \cdots&a_{r,r}+b_ru_r&a_{r,r+1}+b_{r+1}u_r &\cdots& a_{r,k-1}+b_{k-1}u_r \\
			0  &\cdots&0&a_{r+1,r+1} &\cdots&a_{r+1,k-1}\\
			\vdots & \vdots&\vdots &\vdots & \vdots &\vdots\\
			0 & \dots&0&a_{k-1,r+1} & \cdots&a_{k-1,k-1}
		\end{matrix}\right)  $$
		for $a_{ij}, b_i \in \mathrm{GF}(q) \left( 1 \le i,j \le k-1\right) $, $B \in M_{(k-1) \times (m-k+1)}(q^l)$, $C \in \mathrm{GL}(m-k+1,q^l)$.
	 Set
		\begin{equation*}\label{eq:Gprime}
			G'=\left\{
			D=\begin{pmatrix}
				a_{11}+b_1u_1   & \cdots & a_{1r}+b_ru_1\\
				\vdots        & \ddots & \vdots\\
				a_{r1}+b_1u_r & \cdots & a_{rr}+b_ru_r
			\end{pmatrix}
			\,:\,
			\begin{array}{l}
				a_{ij},\,b_j\in\GF(q), \det(D)\neq 0,\\
				D\bigl(\sum_{i=1}^ru_iY_i\bigr)\in T'(u_1,\dots,u_r)
			\end{array}
			\right\},
		\end{equation*} where
		\begin{equation}\label{T'}
			T'(u_1,\dots,u_r)=\left\langle Y_1,\dots,Y_r,\sum_{i=1}^r u_iY_i\right\rangle.
		\end{equation}
			Obviously,
		\begin{equation}\label{Gs}
	\begin{aligned}
			\Big|G_{T(u_1,\dots, u_r)} \Big| &=\left| G'\right| q^{(r+1)(k-r-1)}\left|\GL(k-r-1,q) \right| q^{(k-1)(m-k+1)} \left| \GL(m-k+1,q^l) \right| \\
			&=\left| G'\right| \prod_{i=r+1}^{k-1}(q^k-q^i) \prod_{i=k-1}^{m-1}(q^{ml}-q^{il}).
		\end{aligned}
		\end{equation}
	Then the conclusion follows if Claim \ref{bij} is true.
	\end{proof}

	\begin{cla}\label{bij}
		There is a bijection from $G'$ to $H_{\left\langle 1,u_1,\dots,u_r\right\rangle }$.
	\end{cla}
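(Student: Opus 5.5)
The plan is to reuse the computation from the necessity part of Lemma~\ref{mapOur} with $u_i'=u_i$. It associates to each $D\in G'$ a scalar $\lambda_D\in\GF(q^l)^*$ satisfying $\lambda_D\langle 1,u_1,\dots,u_r\rangle=\langle 1,u_1,\dots,u_r\rangle$. We then define $\Phi(D)$ to be the element of $H$ given by multiplication by $\lambda_D^{-1}$ (equivalently by $\lambda_D$, since $H_{\langle 1,u_1,\dots,u_r\rangle}$ is a group). We check that $\Phi$ is well defined, injective and surjective.

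\emph{Definition of $\lambda_D$.} Write $D=(a_{ij}+b_ju_i)$ with $\boldsymbol b=(b_1,\dots,b_r)^T$ and $A=(a_{ij})$. Since $D(\sum_i u_iY_i)\in T'(u_1,\dots,u_r)$, there are $d_1,\dots,d_r,\alpha\in\GF(q)$ with $D(\sum u_iY_i)=\sum d_iY_i+\alpha\sum u_iY_i$. Comparing coefficients gives $A\boldsymbol u-\boldsymbol d=\lambda\boldsymbol u$, where $\lambda=\alpha-\boldsymbol b^T\boldsymbol u$.

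The $\GF(q)$-data $(A,\boldsymbol b)$ is uniquely determined by $D$. Indeed, each entry $a_{ij}+b_ju_i$ determines $a_{ij}$ and $b_j$, because $1,u_i$ are independent. The coefficients $\boldsymbol d,\alpha$ are then uniquely determined, because $Y_1,\dots,Y_r,\sum u_iY_i$ are $\GF(q)$-independent. Hence $\lambda=\lambda_D$ is well defined.

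\emph{$\lambda_D$ stabilizes the subspace.} As in Lemma~\ref{mapOur}, the matrix $\begin{pmatrix}\alpha&\boldsymbol b^T\\ \boldsymbol d&A\end{pmatrix}$ lies in $\GL(r+1,q)$. The conjugation identity from Lemma~\ref{mapOur} then forces $\lambda\neq0$. The relation $A\boldsymbol u-\boldsymbol d=\lambda\boldsymbol u$ shows $\lambda u_i\in\langle 1,u_1,\dots,u_r\rangle$. Also $\lambda=\alpha-\boldsymbol b^T\boldsymbol u$ lies in that subspace. So $\lambda\langle1,\boldsymbol u\rangle\subseteq\langle1,\boldsymbol u\rangle$, and equality holds by dimension. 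Thus $\lambda_D$ gives an element of $H_{\langle 1,u_1,\dots,u_r\rangle}$.

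\emph{Surjectivity.} This is the sufficiency construction of Lemma~\ref{mapOur} with $u'=u$. Given $\lambda$ in the stabilizer, write $\lambda(1,\boldsymbol u^T)=(1,\boldsymbol u^T)M$ with $M=\begin{pmatrix}\alpha&\boldsymbol e^T\\ \boldsymbol c&A\end{pmatrix}\in\GL(r+1,q)$. Put $D=A-\boldsymbol c\boldsymbol u^T$; its entries are $a_{ij}-c_iu_j$. To land in the parametrization $a_{ij}+b_ju_i$ of $G'$ we should use the transpose convention. Concretely, this means taking $D=A^T-\boldsymbol u\boldsymbol c^T$, or equivalently applying the lemma's construction to the column relation instead of the row relation. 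The index bookkeeping here needs care, and I expect it to be the main obstacle. One then checks that $D$ is invertible, that $D(\sum u_iY_i)\in T'$, and that the resulting $\lambda_D$ is the given $\lambda$ (or its inverse, under a fixed convention).

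\emph{Injectivity.} Suppose $\lambda_D=\lambda_{D'}$. From $D(Y_j)=\sum_i(a_{ij}+b_ju_i)Y_i$, the multiplication by $\lambda$ determines the $\GF(q)$-matrix $M$ uniquely, since $\{1,u_1,\dots,u_r\}$ is a basis. Reading $A$ and $\boldsymbol b$ off from $M$ via the correspondence above shows $D=D'$. Alternatively, we can count: we have $|G'|=|\{\text{invertible } D\}\cap\{\text{condition}\}|$, and a direct argument shows that $\lambda\mapsto M$ and $D\mapsto M$ are both injective. Either way $\Phi$ is a bijection, which proves the claim and finishes Lemma~\ref{G_Our}.
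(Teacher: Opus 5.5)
Your proposal is correct and is essentially the paper's proof. Your $\lambda_D=\alpha-\boldsymbol b^T\boldsymbol u$ is $-\psi(D)$, and your surjectivity matrix $D=A^T-\boldsymbol u\boldsymbol c^T$ is $-\chi(\lambda)$ for the paper's explicit inverse $\chi$, so the checks you deferred go through and give $\lambda_D=\lambda$ exactly. Your injectivity argument, which recovers $A$ and $\boldsymbol b$ from the expansions of $\lambda_D$ and $\lambda_D u_i$ in the basis $1,u_1,\dots,u_r$, is the paper's verification $\chi\psi=\mathrm{id}$. The only real variation is that you get $\lambda_D\neq 0$ from the determinant identity of Lemma~\ref{mapOur}, whereas the paper argues directly that $\lambda=0$ forces $\boldsymbol b=\boldsymbol 0$ and then $A=0$.

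Your suspicion about the transpose is well founded: the transposed form is exactly what puts each $D(Y_j)$ into $T'(u_1,\dots,u_r)$, and for the same reason the matrix $A-\boldsymbol c\boldsymbol u'^T$ used in the sufficiency part of Lemma~\ref{mapOur} should itself be transposed.
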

	Now, we prove this claim.
	For any \(D\in G'\), we have
	\begin{equation}\label{gU1}
			D(\sum_{i=1}^r u_iY_i)= \sum_{i=1}^{r}\left(\sum_{j=1}^{r}(a_{ij}+b_ju_i)u_j \right)Y_i\in T'(u_1,\dots,u_r).
	\end{equation}
	By \eqref{T'}, there exists unique \(\alpha_1,\dots,\alpha_r,\beta\in\GF(q)\) such that
	\begin{equation}\label{gU2}
	D(\sum_{i=1}^r u_iY_i)
	=\sum_{i=1}^{r}(\alpha_i+\beta u_i)Y_i.
	\end{equation}
	Set
	\begin{equation*}
	\lambda:=\sum_{j=1}^{r}b_ju_j-\beta
	\in \langle 1,u_1,\dots,u_r\rangle.
	\end{equation*}
	Then, for each \(i=1,\dots,r\), comparing the coefficients of $Y_1,\dots,Y_r$ in \eqref{gU1} and \eqref{gU2} gives
	\begin{equation}\label{xxx}
	\lambda u_i
	=\alpha_i-\sum_{j=1}^{r}a_{ij}u_j
	\in \langle 1,u_1,\dots,u_r\rangle.
	\end{equation}
	Therefore \(\lambda\langle 1,u_1,\dots,u_r\rangle\subseteq \langle 1,u_1,\dots,u_r\rangle\).
	If \(\lambda=0\), then $\sum_{j=1}^{r}b_ju_j=\beta\in\GF(q).$
Because \(1,u_1,\dots,u_r\) are linearly independent over \(\GF(q)\), we obtain
	\(\beta=b_1=\cdots=b_r=0\). It follows from \eqref{gU1} and \eqref{gU2} that
	\[
	\sum_{j=1}^{r}a_{ij}u_j=\alpha_i\in\GF(q)
	\ (1\le i\le r),
	\]
	which again implies \(a_{ij}=0\) for all \(i,j\), contradicting \(\det(D)\neq 0\).
	Thus \(\lambda\neq 0\) and so $\lambda\in H_{\langle 1,u_1,\dots,u_r\rangle}.$ Hence, we have a well-defined mapping
	\[
	\begin{aligned}
		\psi: G' &\longrightarrow H_{\langle 1,u_1,\dots,u_r\rangle}\\
		D=(a_{ij}+b_ju_i)_{r\times r} &\longmapsto \lambda=\sum_{j=1}^{r}b_ju_j-\beta,\ \text{where}\ \eqref{gU2} \ \text{is satisfied}.
	\end{aligned}
	\]
	
	Next, we construct the inverse mapping of $\psi$.
	For any
	\(\lambda\in H_{\langle 1,u_1,\dots,u_r\rangle}\), since $\lambda,\lambda u_i\in \left\langle 1,u_1,\dots,u_r\right\rangle $, they can be written uniquely in the form
	\begin{equation}\label{aaaa}
	\lambda =\beta+\sum_{j=1}^{r}a_ju_j,\
	\lambda u_i=\alpha_i+\sum_{j=1}^{r}c_{ij}u_j\ \text{for some}\ \beta,a_j,\alpha_i,c_{ij}\in\GF(q).
	\end{equation}
	Now define
	\begin{equation}\label{aaa}
	D=
	\begin{pmatrix}
		-c_{11}+a_1u_1 & \cdots & -c_{1r}+a_ru_1\\
		\vdots & \ddots & \vdots\\
		-c_{r1}+a_1u_r & \cdots & -c_{rr}+a_ru_r
	\end{pmatrix}.
	\end{equation}
	Then by using \eqref{aaaa} we get
	\begin{equation}\label{gU3}
		\begin{aligned}
				D(\sum_{i=1}^r u_iY_i)
			&=\sum_{i=1}^{r}\left(\sum_{j=1}^{r}(-c_{ij}+a_ju_i)u_j\right)Y_i\\
			&=\sum_{i=1}^{r}\left(-\sum_{j=1}^{r}c_{ij}u_j+\sum_{j=1}^{r}a_ju_iu_j\right)Y_i\\
			&=\sum_{i=1}^{r}(\alpha_i-\beta u_i)Y_i
			\in T'(u_1,\dots,u_r).
		\end{aligned}
	\end{equation}
	Obviously, $D(Y_i)\in T'(u_1,\dots,u_r)$ for all $1\leq i\leq k-1$. To prove that $D\in G'$,
	it remains to prove that \(\det(D)\neq 0\).
	
	Let
	\[
	\boldsymbol v_j'=(a_j,-c_{1j},\dots,-c_{rj})
	\ (1\le j\le r).
	\]
	From Corollary \ref{cla}, we know that \(\det(D)\neq 0\) if and only if \(\boldsymbol v_1',\dots,\boldsymbol v_r'\) are linearly independent over \(\GF(q)\).
	
	Now set
	\[
\boldsymbol	v_0=(\beta,\alpha_1,\dots,\alpha_r),\
\boldsymbol	v_j=(a_j,c_{1j},\dots,c_{rj})\in\GF(q)^{r+1}\quad (1\le j\le r).
	\]
	Then, by \eqref{aaaa}, we get
	\[
	\lambda(1,u_1,\dots,u_r)=(1,u_1,\dots,u_r)\begin{pmatrix}
	\boldsymbol	v_0\\
	\boldsymbol	v_1\\
		\vdots\\
	\boldsymbol	v_r
	\end{pmatrix}.
	\]
	Since \(\lambda\neq 0\), the multiplication by \(\lambda\) is an invertible \(\GF(q)\)-linear map on
	\(\langle 1,u_1,\dots,u_r\rangle\). Hence
	\(\boldsymbol v_1,\dots,\boldsymbol v_r\) are linearly independent over \(\GF(q)\).
	On the other hand,
	\[
\boldsymbol	v_j'=\boldsymbol v_j \operatorname{diag}(1,-1,\cdots,-1),
	\]
	so \(\boldsymbol v_1',\dots,\boldsymbol v_r'\) are also linearly independent over \(\GF(q)\). Thus \(\det(D)\neq 0\) and $D\in G'$.
	Hence we have a well-defined mapping
	\[
	\begin{aligned}
		\chi:H_{\langle 1,u_1,\dots,u_r\rangle}
		&\longrightarrow G'\\
		\lambda&\longmapsto D,
	\end{aligned}
	\]
	where $\lambda$ satisfies \eqref{aaaa} and $D$ is of form \eqref{aaa}.
	
	Finally, we prove that $\chi$ is the inverse mapping of $\psi$.  For any \(D\in G'\), let
	\[
	D=\begin{pmatrix}
		a_{11}+b_1u_1   & \cdots & a_{1r}+b_ru_1\\
		\vdots        & \ddots & \vdots\\
		a_{r1}+b_1u_r & \cdots & a_{rr}+b_ru_r
	\end{pmatrix},
	\] and $D(\sum_{i=1}^r u_iY_i)
	=\sum_{i=1}^{r}(\alpha_i+\beta u_i)Y_i,$ where $\alpha_i,\beta\in\GF(q)$.
	Then $\psi(D)=\lambda=\sum_{j=1}^{r}b_ju_j-\beta$ and by \eqref{xxx}, we have
	\[
	\lambda u_i=\alpha_i-\sum_{j=1}^{r}a_{ij}u_j\ \text{for}\ i=1,\dots,r.
	\]
	Hence
	\[
	\chi\psi(D)=\begin{pmatrix}
		a_{11}+b_1u_1 & \cdots & a_{1r}+b_ru_1\\
		\vdots & \ddots & \vdots\\
		a_{r1}+b_1u_r & \cdots & a_{rr}+b_ru_r
	\end{pmatrix}=D.
	\]
	Conversely, for any \(\lambda\in H_{\langle 1,u_1,\dots,u_r\rangle}\), let $\lambda=\sum_{j=1}^{r}a_ju_j+\beta$, and for each \(i=1,\dots,r\), $\lambda u_i=\alpha_i+\sum_{j=1}^{r}c_{ij}u_j,$ where $a_i,\beta,\alpha_i,c_{ij}$ are uniquely determined in $\GF(q)$.
	Then
	\[
	\chi(\lambda)=\begin{pmatrix}
		-c_{11}+a_1u_1   & \cdots & -c_{1r}+a_ru_1\\
		\vdots        & \ddots & \vdots\\
		-c_{r1}+a_1u_r & \cdots & -c_{rr}+a_ru_r
	\end{pmatrix}.
	\]
	From \eqref{gU3}, we have
	$$
	\chi(\lambda)(\sum_{i=1}^{r}u_iY_i)=\sum_{i=1}^{r}(\alpha_i-\beta u_i)Y_i.
	$$
	Hence $\psi\chi(\lambda)=\sum_{j=1}^{r}a_ju_j+\beta=\lambda$.
	
Therefore \(\psi\) and \(\chi\) are mutually inverse mappings, and hence the claim is true. Thus $|G'|=\left|H_{\langle 1,u_1,\dots,u_r\rangle}\right|$ and the conclusion follows by \eqref{Gs}.

	\section{The incidence matrix $M_{2,k}$  }\label{section4}
	In this section, we explicitly determine the entries in all $\mathcal F_{k,r}^{k-1}$-column blocks of the $G$-incidence matrix $M_{2,k}$, meaning that the corresponding columns are indexed by $G$-orbit representatives in $\mathcal F_{k,r}^{k-1}$, where $1\leq r\leq k-1$. This will yield many new infinite families of \(q\)-analogs of group divisible designs with arbitrary block dimension.
	
	We first present the known representatives of $G$-orbits on $\Omega_{k}^1$ and $\Omega_{k}^k$, together with the
	$(\mathcal F_2^1,\mathcal F_k^1)$-, $(\mathcal F_2^1,\mathcal F_k^k)$-, $(\mathcal F_2^2,\mathcal F_k^1)$- and $(\mathcal F_2^2,\mathcal F_k^k)$-submatrices of $M_{2,k}$, where $k\geq 2$. Recall that $\mathcal O_k$ is a complete set of representatives for the $H$-orbits on $\left[  \GF(q)^l \atop k \right] $.
	\begin{lem}\rm\cite{WZ}\label{fi1}
		For each integer \(k\geq 2\), we have the following:
		\begin{enumerate}
			\item [\rm(i)] A complete set of representatives of $G$-orbits on $\Omega_{k}^1$ is
			\[
			\mathcal{F}_k^1=\left\{WY_1:W\in\mathcal{O}_k\right\}.
			\]
			\item [\rm(ii)] For \( k\le m\), the action of \(G\) on \(\Omega_k^k\) gives a single orbit $W^G$, where $W=\left\langle Y_1,\dots,Y_k \right\rangle $. In brief,
			\[
			\mathcal{F}_k^k=\left\{\langle Y_1,\dots,Y_k\rangle\right\}.
			\]
			\item [\rm(iii)] For $k\leq l$, the $(\mathcal F_2^1,\mathcal F_k^1)$-submatrix of $M_{2,k}$ coincides with the $H$-incidence matrix between $\left[\GF(q)^l\atop 2 \right] $ and $\left[\GF(q)^l\atop k \right] $.
			\item [\rm(iv)] For $l\geq 2$ and $3\leq k\leq m$, the $(\mathcal F_2^1,\mathcal F_k^k)$-submatrix of $M_{2,k}$ is a zero matrix.
			\item [\rm(v)] For $m\geq 2$ and $3\leq k\leq l$, the $(\mathcal F_2^2,\mathcal F_k^1)$-submatrix of $M_{2,k}$ is a zero matrix.
			\item [\rm(vi)] For $l\geq 2$ and $3\leq k\leq m$, the $(\mathcal F_2^2,\mathcal F_k^k)$-submatrix $R$ of $M_{2,k}$  has exactly one element, that is
			\[R= \left( q^{(l-1)\left(\left( k\atop 2\right) -1\right)}
			\prod_{i=2}^{k-1}\frac{q^{(m-i)l}-1}{q^{\,k-i}-1}\right) .\]
		\end{enumerate}
	\end{lem}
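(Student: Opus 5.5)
The lemma has six parts, and I would prove them in turn, using throughout that $G=\GL(m,q^l)$ acts transitively on ordered $\GF(q^l)$-linearly independent tuples and that every subspace in $\Omega_k^1$ lies in a $1$-dimensional $\GF(q^l)$-subspace.

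For (i), take $W\in\Omega_k^1$. Then $\langle W\rangle_{\GF(q^l)}=\GF(q^l)z$ for some $z$. Choose $g\in G$ with $g(z)=Y_1$; this gives $g(W)=W'Y_1$ with $W'\le\GF(q^l)$ a $k$-subspace over $\GF(q)$. Next, suppose $g(WY_1)=W'Y_1$. Then $g$ fixes the line $\GF(q^l)Y_1$, so $g(Y_1)=cY_1$ with $c\in\GF(q^l)^*$, and hence $W'=cW$. Conversely, $\operatorname{diag}(c,1,\dots,1)\in G$ realises any such $c$. So the $G$-orbits on $\Omega_k^1$ correspond to the $H$-orbits on $\left[\GF(q)^l\atop k\right]$, which is (i).

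For (ii), a $k$-subspace whose $\GF(q^l)$-span has dimension $k$ must have a $\GF(q)$-basis that is $\GF(q^l)$-independent. Transitivity of $G$ on such $k$-tuples then gives a single orbit.

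For (iii), take a representative $T=UY_1$ with $U\in\mathcal O_2$, and an orbit $(WY_1)^G$ with $W\in\mathcal O_k$. Any member of this orbit containing $T$ spans the same $\GF(q^l)$-line as $T$, namely $\GF(q^l)Y_1$. So it equals $W'Y_1$ with $W'\in W^H$ and $U\subseteq W'$. Counting these gives exactly the $H$-incidence entry.

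Parts (iv) and (v) are dimension obstructions, using $l\ge 2$ and $m\ge 2$ where needed to make the row and column classes nonempty.

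For (iv), let $K\in\Omega_k^k$. It has a $\GF(q)$-basis $z_1,\dots,z_k$ that is $\GF(q^l)$-independent, so every nonzero vector of $K$ is a $\GF(q)$-combination $\sum c_iz_i$. If $K$ contained some $T\in\Omega_2^1$, two vectors $\sum c_iz_i$ and $\sum c_i'z_i$ would be $\GF(q^l)$-proportional. By independence of the $z_i$, the ratio would lie in $\GF(q)$, so the two vectors would be $\GF(q)$-dependent, a contradiction.

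For (v), a subspace in $\Omega_k^1$ has $\GF(q^l)$-span of dimension $1$, so it cannot contain any member of $\Omega_2^2$.

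The real work is (vi). By (ii) applied to $k=2$, $\Omega_2^2$ is a single orbit. The entry is therefore the number of $K\in\Omega_k^k$ containing a fixed $T=\langle Y_1,Y_2\rangle$. I would count this by double counting:
\[
R=\frac{|\Omega_k^k|\,\bigl|\left[K\atop 2\right]\cap\Omega_2^2\bigr|}{|\Omega_2^2|}.
\]
The two orders needed are computed as follows.
\begin{enumerate}
\item By (iv), every $2$-subspace of $K\in\Omega_k^k$ lies in $\Omega_2^2$, so the inner count is $\left[k\atop 2\right]_q$.
\item $|\Omega_j^j|$ equals the number of ordered $\GF(q^l)$-independent $j$-tuples, namely $\prod_{i=0}^{j-1}(q^{ml}-q^{il})$, divided by the number of ordered $\GF(q)$-bases of a fixed $K$. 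The latter is $|\GL(j,q)|$, because every $\GF(q)$-basis of $K$ is automatically $\GF(q^l)$-independent (the same argument as in (iv)).
\end{enumerate}

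Simplifying the quotient, the $q$-powers should combine to $q^{(l-1)(\binom{k}{2}-1)}$, and the remaining factors should give $\prod_{i=2}^{k-1}(q^{(m-i)l}-1)/(q^{k-i}-1)$.

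I expect the main obstacle to be this final bookkeeping of $q$-exponents. As a safeguard, I would alternatively count directly: extend $Y_1,Y_2$ step by step, choosing a vector outside the current $\GF(q^l)$-span, and divide by the number of ordered completions of a $\GF(q)$-basis of $\langle Y_1,Y_2\rangle$ inside $K$. This gives
\[
\prod_{i=2}^{k-1}\frac{q^{ml}-q^{il}}{q^k-q^i},
\]
which expands to the stated formula, since the exponent $\sum_{i=2}^{k-1}(il-i)=(l-1)\bigl(\binom{k}{2}-1\bigr)$.
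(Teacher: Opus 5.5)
Your argument is correct in all six parts. The cancellation you hedge on in (vi) does go through exactly:
\[
\frac{\prod_{i=0}^{k-1}(q^{ml}-q^{il})}{\prod_{i=0}^{k-1}(q^{k}-q^{i})}\cdot\frac{(q^k-1)(q^k-q)}{(q^{ml}-1)(q^{ml}-q^{l})}=\prod_{i=2}^{k-1}\frac{q^{ml}-q^{il}}{q^{k}-q^{i}},
\]
so your double count and your direct extension count are the same computation.

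The paper gives no proof of this lemma; it is quoted from \cite{WZ}. The relevant comparison is therefore with the technique the paper uses for the other entries of $M_{2,k}$ in Lemmas \ref{valueOkur1} and \ref{valueOur2}. There one fixes the row representative, counts $H'=\{h\in G: h(Y_1),h(Y_2)\in K\}$, and divides by $|G_K|$. For (vi), with $K=\langle Y_1,\dots,Y_k\rangle$, this gives $|H'|=(q^k-1)(q^k-q)\prod_{i=2}^{m-1}(q^{ml}-q^{il})$ and $|G_K|=|\GL(k,q)|\,q^{kl(m-k)}|\GL(m-k,q^l)|$. The quotient is the same value.

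Your route is more elementary. It needs no stabilizer computation, only part (ii) (single orbits on $\Omega_2^2$ and $\Omega_k^k$) and the observation from your proof of (iv) that every $\GF(q)$-basis of a member of $\Omega_k^k$ is $\GF(q^l)$-independent. The stabilizer route is what the paper genuinely needs for the $\mathcal F_{k,r}^{k-1}$ columns. There $\Omega_k^{k-1}$ splits into many $G$-orbits, so the individual orbit lengths (Lemma \ref{G_Our}) cannot be bypassed by a global count.
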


	\subsection{$\left( \mathcal{F}_2^1,\mathcal{F}_{k,r}^{k-1} \right)$-submatrix}
	\begin{lem}\label{valueOkur1}

     For $r=1$, the $\left( \mathcal{F}_2^1,\mathcal{F}_{k,1}^{k-1} \right)$-submatrix of $M_{2,k}$ is a diagonal matrix
    $E=\operatorname{diag}(e,\dots,e)$ of order $n_2^l$, where
   $e=\frac{\prod_{i=1}^{k-2}(q^{ml}-q^{il})}{\prod_{i=2}^{k-1}(q^{k}-q^{i})}.$

	\end{lem}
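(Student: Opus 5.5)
Fix a row representative $WY_1$ with $W=\langle 1,w\rangle\in\mathcal O_2$ (Lemma \ref{fi1}(i)) and a column representative $T(u_1)=\langle Y_1,\dots,Y_{k-1},u_1Y_1\rangle$ with $\langle 1,u_1\rangle\in\mathcal O_2$. By Corollary \ref{num} the columns of $\mathcal F_{k,1}^{k-1}$ are indexed by $\mathcal O_2$, just like the rows, so the submatrix is square of order $n_2^l$. The plan is to show first that the $(WY_1,T(u_1))$-entry is zero unless $\langle 1,u_1\rangle$ and $W$ lie in the same $H$-orbit, and then to compute the common diagonal value $e$ by double counting.

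\emph{Off-diagonal entries.} The only $2$-subspaces of the form $UY$ (with $U\le\GF(q^l)$ a $2$-subspace and $Y\neq 0$) contained in $T(u_1)$ are found as follows. Any $\GF(q^l)$-line $\langle Y\rangle_{\GF(q^l)}$ meets $T(u_1)$ in $\{cY:c\in\GF(q^l)\}\cap T(u_1)$. Writing $Y=\sum y_iY_i$, the condition $cy_i\in\GF(q)$ for $i\geq 2$ and $cy_1\in\langle 1,u_1\rangle$ forces this intersection to have $\GF(q)$-dimension at most $2$. Equality holds only when $\langle Y\rangle_{\GF(q^l)}=\langle Y_1\rangle_{\GF(q^l)}$, and then the intersection is $\langle 1,u_1\rangle Y_1$. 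Indeed, if $y_i\neq 0$ for some $i\ge 2$, then $c$ ranges over $y_i^{-1}\GF(q)$, which gives dimension at most $1$.

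Hence $T(u_1)\cap\Omega_2^1=\{\langle1,u_1\rangle Y_1\}$, a single element. The $2$-subspace $WY_1$ lies in some $G$-image of $T(u_1)$ iff $g^{-1}(WY_1)=\langle 1,u_1\rangle Y_1$ for some $g\in G$. By Lemma \ref{fi1}(i) (equivalently, $\lambda Y_1\mapsto$ a scalar), this happens iff $W$ and $\langle1,u_1\rangle$ lie in the same $H$-orbit, that is, iff they are the same representative. So the submatrix is diagonal.

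\emph{Diagonal value.} Double count the pairs $(S,T')$ with $S\in (WY_1)^G$, $T'\in T(u_1)^G$ and $S\subseteq T'$, where $W=\langle 1,u_1\rangle$. Each $T'$ contains exactly one element of $\Omega_2^1$, and that element lies in the orbit $(WY_1)^G$. Therefore
\[
e\,|(WY_1)^G|=|T(u_1)^G|, \qquad e=\frac{|G_{WY_1}|}{|G_{T(u_1)}|}.
\]
By Lemma \ref{G_Our} with $r=1$,
\[
|G_{T(u_1)}|=|H_W|\prod_{i=2}^{k-1}(q^k-q^i)\prod_{i=k-1}^{m-1}(q^{ml}-q^{il}).
\]
For $G_{WY_1}$, an element must map $Y_1$ to $\lambda Y_1$ with $\lambda W=W$, so $\lambda\in H_W$. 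The remaining columns are arbitrary subject to invertibility, which gives
\[
|G_{WY_1}|=|H_W|\prod_{i=1}^{m-1}(q^{ml}-q^{il}).
\]
Taking the quotient yields
\[
e=\frac{\prod_{i=1}^{k-2}(q^{ml}-q^{il})}{\prod_{i=2}^{k-1}(q^k-q^i)}.
\]
This value is independent of $W$, as required.

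The main obstacle is the intersection computation showing that $T(u_1)$ contains exactly one subspace from $\Omega_2^1$. The stabilizer count for $WY_1$ needs a short check, using that the first column of $g$ is $\lambda Y_1$, since $g$ fixes $\langle Y_1\rangle_{\GF(q^l)}$ and $\lambda\in\GF(q^l)^*$ acts as multiplication.
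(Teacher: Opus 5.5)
Your proof is correct and follows essentially the paper's route: the paper likewise shows that the first column of a transporting element must be $z_1Y_1$ with $z_1W=\langle 1,v\rangle$, and computes the entry as $|H'|/|G_{T(v)}|$ using Lemma \ref{G_Our}, where $H'=\{h\in G: h(WY_1)\subseteq T(v)\}$. Your double count, based on each member of $T(u_1)^G$ containing exactly one subspace from $\Omega_2^1$, gives the same ratio, since $H'$ is a coset of $G_{WY_1}$ and so $|H'|=|G_{WY_1}|$. One small nit: for $i\ge k$ the condition should read $cy_i=0$ rather than $cy_i\in\GF(q)$, which only strengthens your bound.
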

	\begin{proof}
	By Lemma \ref{fi1}, take $WY_1\in\mathcal F_2^1$ and $T(v)=\left\langle Y_1,\dots,Y_{k-1},vY_1\right\rangle \in \mathcal F_{k,1}^{k-1}$, where \(W=\langle u_1,u_2\rangle\in\left[ \GF(q)^l \atop 2\right] \),
		\(\langle 1,v\rangle\in\left[ \GF(q)^l \atop 2\right]\).
		Then
		\[\Omega:=\{\,g(T(v)):\ WY_1\subseteq g(T(v)),\ g\in G\,\}=\{\,h^{-1}(T(v)):\ h\in H'\,\},\] where \[
		H':=\{\,h\in G:\ h(u_1Y_1),\,h(u_2Y_1)\in T(v)\,\}.
		\]
		Assume that $h\in H'$ and
		\[
		h(Y_1)=z_1Y_1+\cdots+z_{k-1}Y_{k-1}, z_i\in \GF(q^l).
		\]
		 It follows that
		\[
		\bigl(h(u_1Y_1), h(u_2Y_1)\bigr)
		=
		(Y_1,\dots, Y_{k-1})
		\begin{pmatrix}
			z_1u_1 & z_1u_2\\
			\vdots & \vdots\\
			z_{k-1}u_1 & z_{k-1}u_2
		\end{pmatrix}.
		\]
		Since \(h(u_1Y_1),h(u_2Y_1)\in T(v)\), it follows from
		the definition of $T(v)$ that $z_1u_1,z_1u_2\in\left\langle 1,v\right\rangle $ and \(z_iu_1,z_iu_2\in\GF(q)\) for \(2\le i\le k-1\). Furthermore, we have $z_1\left\langle u_1,u_2\right\rangle =\left\langle 1,v \right\rangle $ and $z_i= 0$ for \(2\le i\le k-1\). Hence, $H'\neq\emptyset$ if and only if $\left\langle u_1,u_2\right\rangle $ and $\left\langle 1,v\right\rangle $ lie in the same $H$-orbit. From Lemmas \ref{mapOur} and \ref{fi1}, we get that the $\left( \mathcal{F}_2^1,\mathcal{F}_{k,1}^{k-1} \right)$-submatrix of $M_{2,k}$ is a diagonal matrix by arranging the rows and columns appropriately. If $H'\neq\emptyset$, then $h\in H'$ has the form
		\[\begin{pmatrix}
			z_1 & \boldsymbol{b}\\
			\boldsymbol{0} & A
		\end{pmatrix},
		\] where $z_1\left\langle u_1,u_2\right\rangle=\left\langle1,v \right\rangle  $, $\boldsymbol{b}\in M_{1\times m}(q^l)$ and $A\in \GL(m-1,q^l)$. Obviously,
		$$\left| H'\right| =\left| H_{\left\langle u_1,u_2\right\rangle }\right| \prod_{i=1}^{m-1} \left(q^{ml} - q^{il}\right).$$
		
		Now, we can define an equivalence relation on $H'$ by $h_1\sim h_2$ if and only if $h_1^{-1}(T(v))=h_2^{-1}(T(v))$. Then the equivalence classes are in one-to-one correspondence with the members of $\Omega$. Moreover, $h_1\sim h_2$ if and only if $h_2 h_1^{-1} \in G_{T(v)}$, so each equivalence class is a left coset of $G_{T(v)}$ in $H'$, and hence has size $\left|G_{T(v)} \right| $. Therefore, from Lemma \ref{G_Our}, we get that the $(WY_1,T(v))$-entry of $M_{2,k}$ equals
		$$|\Omega|=|H^{\prime}| /|G_{T(v)}|=\frac{\prod_{i=1}^{k-2}(q^{ml}-q^{il})}{\prod_{i=2}^{k-1}(q^{k}-q^{i})}=e.$$
	\end{proof}
	
\begin{lem}\label{21kr}
		For $2\leq r\leq k-1$, the  $\left( \mathcal{F}_2^1,\mathcal{F}_{k,r}^{k-1} \right)$-submatrix of $M_{2,k}$  is a zero matrix.
\end{lem}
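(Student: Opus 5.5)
The plan is to show directly that no $2$-subspace in $\Omega_2^1$ is contained in any member of $\Omega_{k,r}^{k-1}$ when $r\ge 2$. Since $\Omega_2^1$ and $\Omega_{k,r}^{k-1}$ are both $G$-invariant, it suffices to take a column representative $T=T(u_1,\dots,u_r)$ and show that $T$ contains no element of $\Omega_2^1$. Then every entry of the $(\mathcal F_2^1,\mathcal F_{k,r}^{k-1})$-submatrix is $0$.

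The key observation concerns the intersection of $T$ with a $1$-dimensional $\GF(q^l)$-subspace $\langle y\rangle_{\GF(q^l)}$. Suppose $S\le T$ is a $2$-subspace with $\dim_{\GF(q^l)}\langle S\rangle_{\GF(q^l)}=1$, say $S=\langle y, \gamma y\rangle$ with $y\in T$ and $\gamma\in\GF(q^l)\setminus\GF(q)$. Write a general element of $T$ as in \eqref{Tur}: $y=\sum_{i=1}^r(a_i+bu_i)Y_i+\sum_{i=r+1}^{k-1}a_iY_i$, and similarly $\gamma y=\sum_{i=1}^r(a_i'+b'u_i)Y_i+\sum_{i>r}a_i'Y_i$. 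Comparing coefficients of $Y_i$ (the $Y_i$ are $\GF(q^l)$-independent) gives $\gamma(a_i+bu_i)=a_i'+b'u_i$ for $i\le r$ and $\gamma a_i=a_i'$ for $i>r$. Since $\gamma\notin\GF(q)$, the second set forces $a_i=0$ for $i>r$.

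Next I would show $b\ne 0$. If $b=0$, then $\gamma a_i=a_i'+b'u_i$ for $i\le r$. Because $y\ne0$, some $a_{i_0}\neq0$, and then $\gamma\in\langle 1,u_{i_0}\rangle$ with $b'\neq 0$. Now $\gamma a_i\in\GF(q)\gamma$ for all $i$, so $a_i'+b'u_i=(a_i/a_{i_0})(a_{i_0}'+b'u_{i_0})$. Comparing coefficients in the basis $1,u_1,\dots,u_r$ gives $b'u_i=(a_i/a_{i_0})b'u_{i_0}$, which forces $a_i=0$ for $i\ne i_0$. Hence $y=a_{i_0}Y_{i_0}$, and then $\gamma Y_{i_0}\in T$ requires $\gamma\in\langle 1,u_{i_0}\rangle$, which is allowed. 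This case does \emph{not} lead to a contradiction, so the naive approach must be corrected.

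The fix is to determine exactly when $\gamma Y_{i_0}\in T$. Elements of $T$ have $Y_j$-coefficient $a_j+bu_j$ with a common $b$. For $\gamma Y_{i_0}$, all other coordinates $j\le r$, $j\ne i_0$, must vanish, so $a_j+bu_j=0$, which forces $b=0$ since $r\ge2$ supplies such a $j$. But then the $Y_{i_0}$-coefficient lies in $\GF(q)$, contradicting $\gamma\notin\GF(q)$. The same mechanism handles the general case $b\ne0$. Applied to the pair $y,\gamma y$, each has its $Y_i$-coefficients in $\langle1,u_i\rangle$ with a common $u_i$-coefficient $b$ (respectively $b'$). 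From $\gamma(a_i+bu_i)=a_i'+b'u_i$ for all $i\le r$, I take the ratios $\gamma=(a_i'+b'u_i)/(a_i+bu_i)$ for two indices $i\ne j$ (both denominators are nonzero since $b\ne0$ and $1,u_i$ are independent). Cross-multiplying gives a polynomial identity in $u_i,u_j$; the $u_iu_j$ term cancels, leaving a $\GF(q)$-linear relation among $1,u_i,u_j$. Independence then forces $b'a_i=a_i'b$ and $b'a_j=a_j'b$, hence $\gamma=b'/b\in\GF(q)$, a contradiction.

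The main obstacle is handling this cross-multiplication cleanly when some $a_i+bu_i$ vanishes. That case is excluded when $b\neq0$, and the case $b=0$ is handled by the argument in the previous paragraph, so it amounts to careful case bookkeeping. The assumption $r\ge2$ is used precisely to guarantee two indices in $\{1,\dots,r\}$, which is where the case $r=1$ of Lemma~\ref{valueOkur1} differs.
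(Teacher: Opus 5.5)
Your proof is correct and is essentially the paper's argument. The paper invokes Lemma~\ref{vvv} with $s=2\le r$ to show that two $\GF(q^l)$-proportional vectors of $T(u_1,\dots,u_r)$ are $\GF(q)$-proportional, and your cross-multiplication of two coordinates (the $u_iu_j$ terms cancel, then independence of $1,u_i,u_j$ finishes) is exactly the $2\times 2$ minor expansion in that lemma's proof, done inline; you also handle the coordinates on $Y_{r+1},\dots,Y_{k-1}$ explicitly, which the paper leaves implicit. One small correction: the case $b=0$ does yield an immediate contradiction, because comparing $u_i$-coefficients for $i\neq i_0$ already forces $b'=0$, and in any case the cross-multiplied identity needs no division and gives $b'a_{i_0}=a_{i_0}'b=0$ directly. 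So your remark that this case ``does not lead to a contradiction'' and your worry about vanishing denominators are unfounded, although your final argument for that case is valid.
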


	\begin{proof}
		From \eqref{Tur}, let $v_j=\sum_{i=1}^{r}(a_{ij}+b_ju_i)Y_i+\sum_{i=r+1}^{k-1}a_{ij}Y_i$, for $j=1,2$, be any two vectors of $T(u_1,\dots,u_r)$ such that $\dim_{\GF(q^l)}(\left\langle v_1,v_2\right\rangle_{\GF(q^l)} )=1.$ By Lemma \ref{vvv}, we have that $v_1,v_2$ are linearly dependent over $\GF(q)$.
		
		Now, for a contradiction, let $W:=\left\langle v_1,v_2\right\rangle Y_1\in \mathcal F_2^1 $ and assume that there exist $g\in G$ and $T:=T(u_1,\dots,u_r)\in\mathcal F_{k,r}^{k-1}$ such that $W\subseteq g(T')$, that is, $g^{-1}(W)=\left\langle v_1,v_2\right\rangle g^{-1}(Y_1)\subseteq T $. By the first part of the proof, $\left\langle v_1,v_2\right\rangle g^{-1}(Y_1)$ is a $1$-subspace over $\GF(q)$, thus contradicting the fact that $v_1,v_2$ are linearly independent over $\GF(q)$. Therefore,
		the \(\bigl(\mathcal{F}_2^1,\mathcal{F}_{k,r}^{k-1}\bigr)\)-submatrix is a zero matrix.
	\end{proof}

\subsection{\((\mathcal{F}_2^2,\mathcal{F}_{k,r}^{k-1})\)-submatrix}
	\begin{lem}\label{valueOur2}
	\begin{enumerate}
		\item [\rm(i)]For $r=1$, the \((\mathcal{F}_2^2,\mathcal{F}_{k,1}^{k-1})\)-submatrix of $M_{2,k}$ consists of exactly one row \(P\) of length $n_2^l$, namely,
		\[P=\begin{cases}
		\left( a,\dots,a\right),  &\textnormal{if}\ 2\nmid l,                  \\
		\left(a,\dots,a,b \right), &\textnormal{if}\ 2\mid l,
		\end{cases} \]
		where \begin{equation*}
			\begin{aligned}
				a=\frac{\Big((q^k-1)(q^k-q)-(q^2-1)(q^2-q)\Big)
					\prod_{i=2}^{k-2}(q^{ml}-q^{il})}{(q-1)
					\prod_{i=2}^{k-1}(q^k-q^i)},\\
				b=\frac{\Big((q^k-1)(q^k-q)-(q^2-1)(q^2-q)\Big)
					\prod_{i=2}^{k-2}(q^{ml}-q^{il})}{(q^2-1)
					\prod_{i=2}^{k-1}(q^k-q^i)}.
			\end{aligned}
		\end{equation*}

		\item[\rm(ii)]For \(2\le r\le k-1\), let $r_1,\dots,r_s$ be all positive integers with $r_i\mid\gcd(r+1,l)$. Then the \((\mathcal{F}_2^2,\mathcal{F}_{k,r}^{k-1})\)-submatrix of $M_{2,k}$ consists of exactly one row \(Q\) of length \(n_{r+1}^l\), namely
		\[
		Q=(\underbrace{b_{r_1},\dots,b_{r_1}}_{ n_{r+1,r_1}^l },\dots,\underbrace{b_{r_s},\dots,b_{r_s}}_{n_{r+1,r_s}^l}),
		\]
		where
		\[
		b_{r_i}=\frac{(q^k-1)(q^k-q)\displaystyle\prod_{j=2}^{k-2}(q^{ml}-q^{jl})}{(q^{r_i}-1)\displaystyle\prod_{j=r+1}^{k-1}(q^k-q^j)}.
		\]
	\end{enumerate}
	
	\end{lem}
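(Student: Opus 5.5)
By Lemma~\ref{fi1}(ii) (applied with $k=2$), $\Omega_2^2$ is a single $G$-orbit, so there is exactly one row, indexed by $S=\langle Y_1,Y_2\rangle$. The entry in column $T\in\mathcal F_{k,r}^{k-1}$ will be computed by double counting rather than by building coset decompositions as in Lemma~\ref{valueOkur1}. Let $N(T)$ be the number of $2$-subspaces of $T$ lying in $\Omega_2^2$. Counting the incident pairs $(S',T')$ with $S'\in S^G$ and $T'\in T^G$ gives
\[
|S^G|\cdot (S,T)\text{-entry}=|T^G|\cdot N(T).
\]
Hence
\[
(S,T)\text{-entry}=\frac{N(T)\,|G|}{|S^G|\,|G_T|},
\]
where $|G_T|$ is known from Lemma~\ref{G_Our}. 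The orbit size $|S^G|$ is the number of $\GF(q)$-bases $(v_1,v_2)$ with $\langle v_1,v_2\rangle_{\GF(q^l)}$ of dimension $2$, divided by $|\GL(2,q)|$:
\[
|S^G|=\frac{(q^{ml}-1)(q^{ml}-q^l)}{(q^2-1)(q^2-q)}.
\]

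The main task is to compute $N(T)$, which equals the total number of $2$-subspaces of $T$ minus those lying in $\Omega_2^1$.

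For $r\ge 2$, Lemma~\ref{21kr} shows (via Lemma~\ref{vvv}) that no $2$-subspace of $T$ lies in $\Omega_2^1$, so $N(T)=\left[ k\atop 2\right]$.

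For $r=1$ we have $T=\langle Y_1,\dots,Y_{k-1},vY_1\rangle$. A $2$-subspace of $T$ is in $\Omega_2^1$ exactly when its two vectors are $\GF(q^l)$-proportional. I expect that the only such $2$-subspace is $\langle Y_1,vY_1\rangle$: by Lemma~\ref{vvv}, the vectors must be supported on the $Y_1$-coordinate with coefficients in $\langle 1,v\rangle$. Thus $N(T)=\left[ k\atop 2\right]-1$, and this explains the factor $(q^k-1)(q^k-q)-(q^2-1)(q^2-q)$ appearing in $a$ and $b$.

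Substituting these values, I will expand
\[
|G|=\prod_{i=0}^{m-1}(q^{ml}-q^{il})
\]
and cancel against $|G_T|$. The factor $\prod_{i=k-1}^{m-1}(q^{ml}-q^{il})$ cancels directly. Dividing the remaining $\prod_{i=0}^{k-2}(q^{ml}-q^{il})$ by $(q^{ml}-1)(q^{ml}-q^l)$ leaves $\prod_{i=2}^{k-2}(q^{ml}-q^{il})$.

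Collecting the rest, the $\left[ k\atop 2\right]$ together with $|\GL(2,q)|$ produces $(q^k-1)(q^k-q)$ (respectively, the difference above when $r=1$). The term $\prod_{j=r+1}^{k-1}(q^k-q^j)$ comes from Lemma~\ref{G_Our}. Finally, $|H_{\langle 1,u_1,\dots,u_r\rangle}|=q^{u}-1$ with $u\mid\gcd(r+1,l)$ by Theorem~\ref{sin}(i); this gives the denominators $q^{r_i}-1$.

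The multiplicities $n_{r+1,r_i}^l$ in (ii) follow from Corollary~\ref{num}. For $r=1$, the stabilizer is $\GF(q)^*$ except for the single orbit $\GF(q^2)$ when $2\mid l$, whose stabilizer is $\GF(q^2)^*$. That exceptional orbit gives the entry $b$, while all other columns give $a$, as claimed in (i).

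The main obstacle I anticipate is justifying $N(T)$ for $r=1$, that is, showing $\langle Y_1,vY_1\rangle$ is the unique $\Omega_2^1$-subspace of $T$. I will handle it by writing two vectors of $T$ in coordinates and applying Lemma~\ref{vvv} with $s=2$ to the $Y_1$-component. The rest of the proof is careful bookkeeping of the products, checking that $\prod_{i=r+1}^{k-1}$ versus $\prod_{i=2}^{k-1}$ appears correctly (for $r=1$ they coincide).
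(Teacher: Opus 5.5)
Your proof is correct and is essentially the paper's argument. Your double count $N(T)\,|G|/(|S^G|\,|G_T|)$ coincides with the paper's $|H'|/|G_T|$ because $|H'|=N(T)\,|G_S|$. Both rest on the same count of $\GF(q^l)$-independent pairs in $T$ (all pairs for $r\ge 2$, and for $r=1$ all but the $(q^2-1)(q^2-q)$ ordered bases of $\langle Y_1,vY_1\rangle$), combined with Lemma~\ref{G_Our}, Theorem~\ref{sin} and Corollary~\ref{num}.

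One small fix: Lemma~\ref{vvv} does not apply with $s=2>r=1$. The uniqueness of $\langle Y_1,vY_1\rangle$ is immediate anyway. The coefficients of $Y_2,\dots,Y_{k-1}$ in vectors of $T$ lie in $\GF(q)$, so if $x'=\lambda x$ with $x,x'\in T$ and some such coefficient of $x$ is nonzero, then $\lambda\in\GF(q)$.
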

	
	\begin{proof}
	Note that there is only one $G$-orbit on $\Omega_2^2$ with a representative $U=\left\langle Y_1,Y_2\right\rangle $ by Lemma \ref{fi1}. Then
	\[
	\Omega:=\left\lbrace g(T(u_1,\dots,u_r)):U\subseteq g(T(u_1,\dots,u_r)), g\in G\right\rbrace =\left\lbrace h^{-1}(T(u_1,\dots,u_r)): h\in H'\right\rbrace,
	\] where
	\[
	H':=\{\,h\in G:\ h(Y_1),\,h(Y_2)\in T(u_1,\dots,u_r)\,\}.\]
	
	By the definition of \(T(u_1,\dots,u_r)\) in \eqref{Tur}, every element \(h\in H'\) has the form
	\[
	h=
	\left(
	\begin{array}{cc}
		\begin{matrix}
			a_{11}+b_1u_1 & a_{12}+b_2u_1\\
			\vdots & \vdots\\
			a_{r1}+b_1u_r & a_{r2}+b_2u_r\\
			a_{r+1,1} & a_{r+1,2}\\
			\vdots & \vdots\\
			a_{k-1,1} & a_{k-1,2}
		\end{matrix}
		&
		B\\
		\boldsymbol{0} & C
	\end{array}
	\right),
	\]
	where \(a_{ij},b_j\in \GF(q)\) for \(1\le i\le k-1\), \(j=1,2\), $B\in M_{(k-1)\times (m-2)}(q^l), C\in M_{(m-k+1)\times (m-2)}(q^l),$
	and \(\det(h)\neq 0\). Specifically, the first two columns are linearly independent over $\GF(q^l)$.
	
	Firstly, for \(r=1\), the number of the first two columns of $h$ that are linearly independent over \(\GF(q)\) is \((q^k-1)(q^k-q)\). However, when $a_{ij}=0$ for $2\leq i\leq k-1$ and $ j=1,2$, the pair $(a_{11}+b_1u_1, a_{12}+b_2u_1)$ that is linearly independent over $\GF(q)$ must be linearly dependent over $\GF(q^l)$. And the number of such pairs of columns in $h$ is \((q^2-1)(q^2-q)\). Therefore,
	\[|H^{\prime}|=\left( (q^k-1)(q^k-q)-(q^2-1)(q^2-q)\right)  \prod_{i=2}^{m-1}(q^{ml}-q^{il}).
	\]
	
	Secondly, for \(2\le r\le k-1\), Lemma \ref{vvv} shows that the first two columns of $h$ are linearly independent over \(\GF(q)\) if and only if they are also linearly independent over \(\GF(q^l)\).
 Hence
	\[
	|H^{\prime}|=(q^k-1)(q^k-q) \prod_{i=2}^{m-1}(q^{ml}-q^{il}),\ 2\le r\le k-1.
	\]

	Finally, for $1\leq r\leq k-1$, we consider the equivalence relation on $H'$ defined by that $h_1\sim h_2$ if and only if $h_1^{-1}(T(u_1,\dots,u_r))=h_2^{-1}(T(u_1,\dots ,u_r))$. Therefore,
	$$|\Omega|=|H^{\prime}| /|G_{T(u_1,\dots, u_r)}|$$
	and then the conclusion follows from Theorem \ref{sin} and Lemmas \ref{mapOur} and \ref{G_Our}.
	\end{proof}
	
\subsection{Useful submatrix}

From Lemmas \ref{fi1}--\ref{valueOur2}, we get a submatrix $A_k$ of $M_{2,k}$ as follows:
\begin{equation}\label{matrixA_r}
	A_k = \bordermatrix{%
		&\mathcal F_k^1 &\mathcal{F}_{k,1}^{k-1}	& \mathcal{F}_{k,2}^{k-1} &\cdots & \mathcal{F}_{k,k-1}^{k-1} 	& \mathcal{F}_{k}^{k}       \cr
		\mathcal{F}_2^1 &\widetilde{A_k} &E   & \boldsymbol{0}  &\cdots & \boldsymbol{0}   & \boldsymbol{0}      \cr
		\mathcal{F}_2^2 &  \boldsymbol{0} &P  & Q_2 &\cdots &Q_{k-1}     &R        }.
\end{equation}
The matrix $A_k$ has the following properties:
\begin{enumerate}
	\item [\rm(i)] The $(\mathcal F_2^1,\mathcal F_k^1)$-submatrix $\widetilde{A_k}$ coincides with the $H$-incidence matrix between $\left[\GF(q)^l\atop 2 \right] $ and $\left[\GF(q)^l\atop k \right] $;
 	\item [\rm(ii)]
	$E=\operatorname{diag}(e,\dots,e)$ is a diagonal matrix of order $n_2^l$, where
	$e=\frac{\prod_{i=1}^{k-2}(q^{ml}-q^{il})}{\prod_{i=2}^{k-1}(q^{k}-q^{i})};$
	\item [\rm(iii)] \(P\) is one-row matrix of length $n_2^l$, namely,
	\[P=
	\begin{cases}
	\left( a,\dots,a\right),  	&\textnormal{if}\ 2\nmid l,                  \\
		\left(a,\dots,a,b \right), &\textnormal{if}\ 2\mid l,
	\end{cases} \]
	where
	\begin{equation*}
		\begin{aligned}
			a=\frac{\Big((q^k-1)(q^k-q)-(q^2-1)(q^2-q)\Big)
				\prod_{i=2}^{k-2}(q^{ml}-q^{il})}{(q-1)
				\prod_{i=2}^{k-1}(q^k-q^i)},\\
			b=\frac{\Big((q^k-1)(q^k-q)-(q^2-1)(q^2-q)\Big)
				\prod_{i=2}^{k-2}(q^{ml}-q^{il})}{(q^2-1)
				\prod_{i=2}^{k-1}(q^k-q^i)};
		\end{aligned}
	\end{equation*}
	\item [\rm(iv)]
	For $2\leq r\leq k-1$, \(Q_r\) is one-row matrix of length \(n_{r+1}^l\), namely
	\[
	Q_r=(\underbrace{b_{r_1},\dots,b_{r_1}}_{ n_{r+1,r_1}^l },\dots,\underbrace{b_{r_s},\dots,b_{r_s}}_{n_{r+1,r_s}^l}),
	\]
	where
	\[
	b_{r_i}=\frac{(q^k-1)(q^k-q)\displaystyle\prod_{j=2}^{k-2}(q^{ml}-q^{jl})}{(q^{r_i}-1)\displaystyle\prod_{j=r+1}^{k-1}(q^k-q^j)};
	\]
	\item [\rm(v)]
	$R =  \bigg(q^{(l-1)(\binom{k}{2}-1)}\prod_{i=2}^{k-1}\frac{q^{(m-i)l}-1}{q^{k-i}-1} \bigg).$
\end{enumerate}

\section{Constructions of $q$-GDDs, $q$-PBDs and subspace $2$-designs }\label{section5}
In this section, we are concerned with constructions of infinite families of \(q\)-analogs of group divisible designs and their applications. In Subsection \ref{section51}, we exploit the matrix $A_k$ in \eqref{matrixA_r} to construct new infinite families of simple $q$-GDDs. In Subsection \ref{section52}, we establish a recursive construction for $q$-PBDs and obtain several new simple $q$-PBDs by using known $q$-PBDs of small parameters. In Subsection \ref{section53}, we construct many new infinite families of non-simple subspace $2$-designs by applying the ``Filling in holes" construction in Theorem \ref{aaabb} to the newly constructed $q$-GDDs. We also consider breaking up blocks in $q$-PBDs.

\subsection{Simple $q$-GDDs}\label{section51}
We recall the definition of a Desarguesian \(l\)-spread.
Let \(V\) be a \(v\)-dimensional vector space over \(\GF(q)\) and assume that \(l\mid v\).
An \emph{\(l\)-spread} of \(V\) is a \(1\text{-}(v,l,1)_q\) design, that is, a collection
\(\{W_1,\dots,W_s\}\) of \(l\)-subspaces of \(V\) such that
\(W_i\cap W_j=\{0\}\) for \(1\le i<j\le s\) and \(\bigcup_{i=1}^s W_i = V\).
 It follows that $s=\frac{q^{v}-1}{q^{l}-1}.$
It is well known that an \(l\)-spread of \(V\) exists if and only if \(l\mid v\).

As before, we identify the $ml$-dimensional space over $\GF(q)$ with $\GF(q^{ml})$, also with an $m$-dimensional space over $\GF(q^l)$.
Then the set of \(1\)-dimensional \(\GF(q^l)\)-subspaces of \(\GF(q^l)^m\), regarded as
\(l\)-subspaces of \(V\), forms an \(l\)-spread of \(V\). This spread
is called the \emph{Desarguesian \(l\)-spread} of \(V\). In the sequel, it will serve as the
group set in our \((ml,l,k,\lambda)_q\)-GDD constructions.

As shown
in~\cite{buratti} and \cite{WZ}, a \(q\)-GDD with $\Omega_k^k$ as block set has the Desarguesian \(l\)-spread in \(V\) as its group set,
and admits $\GL(m,q^l)$ as an automorphism group, see Theorem \ref{kkkk} for the parameters. We may produce $q$-GDDs with a much wider range of parameters.

	\begin{thm}\label{GDD}
	Let $m,l,k$ be positive integers satisfying $3\le k\le \min\{m+1,l\}$. Then there exists a simple \((ml,l,k,\lambda)_q\)-GDD on \(\GF(q)^{ml}\) with Desarguesian \(l\)-spread \(\mathcal G\) as group set and admitting
		\(\GL(m,q^l)\) as an automorphism group, where
		\[
			\lambda
			=
			\sum_{r=2}^{k-1}\sum_{u\mid\gcd(r+1,l)}
			w_{r,u}\,\frac{(q^k-1)(q^k-q)\prod_{j=2}^{k-2}(q^{ml}-q^{jl})}
			{(q^{u}-1)\prod_{j=r+1}^{k-1}(q^k-q^j)}\\
			+
			w\,q^{(l-1)\left(\binom{k}{2}-1\right)}
			\prod_{j=2}^{k-1}\frac{q^{(m-j)l}-1}{q^{k-j}-1},
		\]
		with $0\leq w_{r,u}\leq n_{r+1,u}^l$ and \(w\in\{0,1\}\).
	\end{thm}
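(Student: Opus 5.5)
The plan is to build the block set $\mathcal B$ as a union of $G$-orbits, using the matrix $A_k$ in \eqref{matrixA_r}. Fix the Desarguesian $l$-spread $\mathcal G$ as group set, i.e.\ the $1$-dimensional $\GF(q^l)$-subspaces of $V$. For each $2\le r\le k-1$ and each $u\mid\gcd(r+1,l)$, choose $w_{r,u}$ of the $n_{r+1,u}^l$ orbit representatives in $\mathcal F_{k,r}^{k-1}$ whose associated $(r+1)$-subspace $\langle 1,u_1,\dots,u_r\rangle$ has $H$-stabilizer $\cong\GF(q^u)^*$. Corollary \ref{num} guarantees that exactly $n_{r+1,u}^l$ such orbits exist. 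If $w=1$, also include the single orbit $\Omega_k^k$ (Lemma \ref{fi1}(ii)); this needs $k\le m$, and when $k=m+1$ we must have $w=0$, since then $\Omega_k^k=\emptyset$. We take $\mathcal B$ to be the union of all the chosen orbits, each with multiplicity one, so the design is simple and $G$-invariant.

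Next I will check condition (i) of a $q$-GDD. Every block $B$ in a chosen orbit of $\Omega_{k,r}^{k-1}$ with $r\ge2$, or of $\Omega_k^k$, contains no $2$-subspace lying in $\Omega_2^1$. For $\Omega_{k,r}^{k-1}$ this is exactly the content of Lemma \ref{21kr}: the proof there shows that any two vectors of $T(u_1,\dots,u_r)$ that are $\GF(q^l)$-dependent are already $\GF(q)$-dependent, via Lemma \ref{vvv}. For $\Omega_k^k$ it follows from Lemma \ref{fi1}(iv), or directly because a $\GF(q^l)$-independent $\GF(q)$-basis stays $\GF(q^l)$-independent. Since a $2$-subspace lies inside some group iff it belongs to $\Omega_2^1$, we get $\dim(B\cap G)\le1$ for every group $G\in\mathcal G$.

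For condition (ii), the $2$-subspaces not contained in any group are precisely those in $\Omega_2^2$, which is a single $G$-orbit with representative $\langle Y_1,Y_2\rangle$. By the Kramer--Mesner principle (Theorem \ref{KM1}, restricted to the $\mathcal F_2^2$ row), the number of blocks through any such $2$-subspace is the sum of the entries of row $\mathcal F_2^2$ of $M_{2,k}$ over the chosen columns. Lemma \ref{valueOur2}(ii) gives the entry $b_u$ for each chosen column of $\mathcal F_{k,r}^{k-1}$ with stabilizer type $u$, and Lemma \ref{fi1}(vi) gives the entry $R$ for the column of $\Omega_k^k$. Summing yields exactly the stated $\lambda$. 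Since every such $2$-subspace is $G$-equivalent to the representative, the count is uniform.

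There is no real obstacle, since all the entries are already computed. The only delicate point is bookkeeping: we must check that the columns indexed by stabilizer type $u$ in $Q_r$ correspond to $G$-orbits counted by $n_{r+1,u}^l$, which is Corollary \ref{num}. We must also check that the $\mathcal F_{k,1}^{k-1}$ columns and the $\mathcal F_k^1$ columns are deliberately excluded: their rows in $\mathcal F_2^1$ are nonzero, by Lemma \ref{valueOkur1} and Lemma \ref{fi1}(iii), so including them would violate condition (i).
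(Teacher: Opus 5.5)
Your proposal is correct and follows essentially the same route as the paper's proof. Both take $\mathcal B$ to be the union of $w_{r,u}$ chosen $G$-orbits from each stabilizer class of $\mathcal F_{k,r}^{k-1}$ ($2\le r\le k-1$), plus the orbit $\Omega_k^k$ when $w=1$; condition (i) comes from the zero $\mathcal F_2^1$-blocks of $A_k$ and $\lambda$ from the $\mathcal F_2^2$ row. Your separate treatment of $k=m+1$ is consistent with the statement, since there the $w$-term of $\lambda$ vanishes (the factor $q^{(m-j)l}-1$ at $j=k-1=m$ is zero), so omitting $\Omega_k^k$ still realizes every listed $\lambda$.
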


	\begin{proof}
		It is not hard to see that the $2$-subspaces covered by the elements of $\mathcal G$ are exactly the $2$-subspaces in $\Omega_2^1$. Consider the matrix \(A_k\) in \eqref{matrixA_r}, for each \(2\le r\le k-1\) and \(u\mid \gcd(r+1,l)\), we may select \(w_{r,u}\) distinct members of \(\mathcal{F}_{k,r}^{k-1}\), denoted by $
		K_{r,u}^{1},\dots,K_{r,u}^{w_{r,u}},$
		so that, for every \(U\in\mathcal F_2^2\) and every \(1\le t\le w_{r,u}\), the \((U,K_{r,u}^{t})\)-entry is
		$
		\frac{(q^k-1)(q^k-q)\prod_{j=2}^{k-2}(q^{ml}-q^{jl})}
		{(q^{u}-1)\prod_{j=r+1}^{k-1}(q^k-q^j)}.$
		 Also, if $w=1$, we choose $W\in\mathcal{F}_k^k$. Define
		\[
		\mathcal{B}=\left( \bigcup_{r=2}^{k-1}\bigcup_{u\mid\gcd(r+1,l)} \left( \left( K_{r,u}^{1}\right) ^G\cup\cdots\cup \left( K_{r,u}^{w_{r,u}}\right) ^G\right) \right)\bigcup W^G   .\]
		Then $(V,\mathcal{B})$ is an $(ml,l,k,\lambda)_q$-GDD with $\lambda$ as desired. Because $w_{r,u}\leq n_{r+1,u}^l$, the $q$-GDD is simple.
	\end{proof}
	
Before proceeding further, we record the following immediate consequence of the above theorem. Its special cases will be crucial for our constructions of $q$-PBDs and subspace $2$-designs presented in the remainder of the paper.
	
\begin{cor}\label{3456}
	Let $m,l,k$ be positive integers satisfying $3\le k\le \min\{m+1,l\}$. Then there exist $(ml,l,k,\lambda)_q$-GDDs on \(\GF(q)^{ml}\) with Desarguesian \(l\)-spread \(\mathcal G\) as group set and admitting \(\GL(m,q^l)\) as an automorphism group, where the parameters are the following:
	\begin{enumerate}
		\item[\rm(i)] For \(k=3\), $\lambda
		=
		w_{2,1}\,(q^2+q+1)(q^3-q)$
		with \( w_{2,1}\le\lfloor \frac{(q^{l-1}-1)(q^{l-2}-1)}{(q^3-1)(q^2-1)} \rfloor;\)
		
		\item[\rm(ii)] For $k=4,5,6$,
		$\lambda
		=
		\frac{(q^k-1)(q^k-q)\prod_{j=2}^{k-2}(q^{ml}-q^{jl})}
		{(q^{\gcd(3,l)}-1)\prod_{j=3}^{k-1}(q^k-q^j)}.$
	\end{enumerate}
\end{cor}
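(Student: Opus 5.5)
The plan is to obtain both parts as direct specializations of Theorem \ref{GDD}. In each case I choose the parameters $w$ and $w_{r,u}$ so that exactly one type of column of $A_k$ contributes. I then check that the resulting $\lambda$ matches the stated value and that the constraint $0\le w_{r,u}\le n_{r+1,u}^l$ from Theorem \ref{GDD} holds. Simplicity and the automorphism group $\GL(m,q^l)$ come for free from that theorem.

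For (i), with $k=3$ the outer sum in Theorem \ref{GDD} has only the term $r=2$, and $u$ ranges over divisors of $\gcd(3,l)$. I take $w=0$, take $w_{2,3}=0$ when $3\mid l$, and keep only $w_{2,1}$. For $k=3$ and $r=2$ both products $\prod_{j=2}^{k-2}$ and $\prod_{j=r+1}^{k-1}$ are empty. The coefficient is therefore
\[
\frac{(q^3-1)(q^3-q)}{q-1}=(q^2+q+1)(q^3-q),
\]
which gives the stated $\lambda$. It remains to verify that $\lfloor (q^{l-1}-1)(q^{l-2}-1)/((q^3-1)(q^2-1))\rfloor\le n_{3,1}^l$, using formula \eqref{ndu}:
\[
n_{3,1}^l=\frac{q-1}{q^l-1}\sum_{t\mid \gcd(3,l)}\mu(t)\left[ l/t\atop 3/t\right]_{q^t}.
\]
The term $t=1$ equals $X:=(q^{l-1}-1)(q^{l-2}-1)/((q^3-1)(q^2-1))$. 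If $3\nmid l$, then $n_{3,1}^l=X$, so $X$ is an integer and equals its floor. If $3\mid l$, the extra term $t=3$ contributes $-\frac{q-1}{q^l-1}\cdot\frac{q^l-1}{q^3-1}=-\frac{1}{q^2+q+1}$. Hence $n_{3,1}^l=X-\frac1{q^2+q+1}$, and since this is an integer exceeding $X-1$, it is at least $\lfloor X\rfloor$. This bookkeeping is the only mildly delicate step.

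For (ii), with $k\in\{4,5,6\}$, I set $w=0$ and $u=\gcd(3,l)$. I take $w_{2,u}=1$ and all other $w_{r,u'}=0$. Substituting $r=2$ into the coefficient in Theorem \ref{GDD} gives the denominator $(q^{u}-1)\prod_{j=3}^{k-1}(q^k-q^j)$, which is exactly the stated $\lambda$.

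The one point to check is $n_{3,u}^l\ge 1$, that is, the existence of an $H$-orbit on $3$-subspaces of $\GF(q^l)$ whose stabilizer is $\GF(q^u)^*$. If $3\nmid l$, then $u=1$, and by Theorem \ref{sin}(i) every $3$-subspace has stabilizer $\GF(q)^*$. Such subspaces exist because $l\ge k\ge 4$. If $3\mid l$, the subfield $\GF(q^3)\le \GF(q^l)$ is a $3$-subspace containing $1$. It is stabilized exactly by multiplication by $\GF(q^3)^*$, so its orbit has stabilizer $\GF(q^3)^*$, and hence $n_{3,3}^l\ge1$. With this verified, Theorem \ref{GDD} applies and yields the GDDs in (ii).
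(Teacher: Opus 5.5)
Your proposal is correct and follows the paper's proof: specialize Theorem \ref{GDD} with $w=0$. For $k=3$ you keep only $w_{2,1}$, using \eqref{ndu} to compare $n_{3,1}^l$ with the stated floor. For $k=4,5,6$ you keep only $w_{2,\gcd(3,l)}=1$. You also check explicitly that $n_{3,\gcd(3,l)}^l\ge 1$, via the subfield $\GF(q^3)$ when $3\mid l$; the paper leaves this implicit, and your check is correct.
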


\begin{proof}
	Apply Theorem~\ref{GDD} with $w=0$. For $k=3$, take $1\leq w_{2,1}\leq n_{3,1}^l$ and $w_{2,u}=0$ if $u\neq 1$. Then we have the result in $\rm(i)$ by noting that $n_{3,1}^l=\frac{q-1}{q^l-1}
	\sum_{ t\mid \gcd(3,l)}
	\mu\!\left(t\right)
	\left[ l/t\atop 3/t\right]_{q^t}=\lfloor \frac{(q^{l-1}-1)(q^{l-2}-1)}{(q^3-1)(q^2-1)} \rfloor.$ For $k=4,5,6$, take $w_{r,u}=1$ if $(r,u)=(2,\gcd(3,l))$ and $w_{r,u}=0$ otherwise. Then the conclusion in $\rm(ii)$ follows.
\end{proof}

\begin{rmk}
	Theorem \ref{GDD} yields $q$-GDDs for arbitrary block dimension $k$. To the best of our knowledge, the constructions previously available in \cite{Wz,WZ} were restricted to the special cases $k=3$, $(k,q)=(4,2)$ or $\gcd(k,l)=1$. Therefore, the present method substantially extends the known constructions to a much broader setting.
\end{rmk}

\subsection{Simple $q$-PBDs}\label{section52}

\begin{thm}\label{pbd}
	Let $m,l,k$ be positive integers satisfying $3\le k\le \min\{m+1,l\}$. If there exists a $q$-PBD$(l,K,\lambda)$ design admitting $H$ as an automorphism group such that
	$$\begin{aligned}
		\lambda =\sum_{r=2}^{k-1}\sum_{u\mid\gcd(r+1,l)}
		w_{r,u}\,\frac{(q^k-1)(q^k-q)\prod_{j=2}^{k-2}(q^{ml}-q^{jl})}
		{(q^{u}-1)\prod_{j=r+1}^{k-1}(q^k-q^j)}\\
		+
		w\,q^{(l-1)\left(\binom{k}{2}-1\right)}
		\prod_{j=2}^{k-1}\frac{q^{(m-j)l}-1}{q^{k-j}-1},
	\end{aligned}$$ 	where $0\leq w_{r,u}\leq n_{r+1,u}^l$ and \(w\in\{0,1\}\). Then there exists a $q$-PBD$(ml,K\cup \left\lbrace k \right\rbrace , \lambda )$ design admitting $G$ as an automorphism group. If the original $2$-$(l,K,\lambda)_q$ design is simple, then the resulting $q$-PBD is also simple.
\end{thm}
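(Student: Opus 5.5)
The plan is to combine the simple $q$-GDD of Theorem~\ref{GDD} with a ``filling in the groups'' step: every group of the Desarguesian $l$-spread receives a copy of the given $q$-PBD$(l,K,\lambda)$. Concretely, let $(\GF(q)^l,\mathcal D)$ be the given $2$-$(l,K,\lambda)_q$ design admitting the Singer cycle $H$. With the same $w_{r,u}$ and $w$, Theorem~\ref{GDD} supplies a simple $(ml,l,k,\lambda)_q$-GDD $(V,\mathcal G,\mathcal B)$ with the same $\lambda$, group set the Desarguesian spread, and $G$-invariant block set $\mathcal B$. Every 2-subspace of $V$ lies either in $\Omega_2^1$ (inside exactly one group $\langle y\rangle_{\GF(q^l)}$) or in $\Omega_2^2$. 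The blocks of $\mathcal B$ already cover each 2-subspace of $\Omega_2^2$ exactly $\lambda$ times. So it remains to add subspaces lying inside groups that cover each 2-subspace of $\Omega_2^1$ exactly $\lambda$ times.

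For each $0\ne y\in V$ consider the $\GF(q)$-isomorphism $\phi_y:\GF(q^l)\to \GF(q^l)y$, $x\mapsto xy$. Set $\mathcal D_y=\{\phi_y(D):D\in\mathcal D\}$. Two generators $y,y'=cy$ with $c\in\GF(q^l)^*$ give $\phi_{y'}(D)=\phi_y(cD)$, so $\mathcal D_{y'}=\mathcal D_y$ because $\mathcal D$ is $H$-invariant (here $H=\{x\mapsto cx\}$). Hence $\mathcal D_{\langle y\rangle}$ depends only on the group. It is a $2$-$(l,K,\lambda)_q$ design on that group, since $\phi_y$ is a $\GF(q)$-isomorphism. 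Define
\[
\mathcal C=\mathcal B\cup\bigcup_{\langle y\rangle_{\GF(q^l)}\in\mathcal G}\mathcal D_{\langle y\rangle}.
\]
A 2-subspace $T\in\Omega_2^1$ lies in a unique group. It is contained in no block of $\mathcal B$, by condition (i) of a $q$-GDD. It is contained in no block of $\mathcal D_{\langle y'\rangle}$ for any other group, since distinct spread elements meet trivially. Inside its own group it lies in exactly $\lambda$ blocks. A 2-subspace in $\Omega_2^2$ lies in no group, so it is covered only by $\mathcal B$, exactly $\lambda$ times. Block dimensions lie in $K\cup\{k\}$, so $\mathcal C$ is a $q$-PBD$(ml,K\cup\{k\},\lambda)$.

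Next, $G$-invariance. For $g\in G$, $g(\phi_y(D))=\{g(xy):x\in D\}=\{x\,g(y):x\in D\}=\phi_{g(y)}(D)$ by $\GF(q^l)$-linearity of $g$. So $g$ maps $\mathcal D_{\langle y\rangle}$ onto $\mathcal D_{\langle g(y)\rangle}$. Together with the $G$-invariance of $\mathcal B$, this shows $\mathcal C$ is $G$-invariant.

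Finally, simplicity, assuming $\mathcal D$ is simple. Blocks of $\mathcal B$ meet every group in dimension at most $1$. A block of some $\mathcal D_{\langle y\rangle}$ lies inside a group and has dimension at least $2$, since blocks of a 2-wise design contain 2-subspaces. So no block of $\mathcal B$ coincides with a group-block. Blocks from different groups differ, because groups meet trivially. Within one group, $\phi_y$ is injective, so a simple $\mathcal D$ gives a simple $\mathcal D_{\langle y\rangle}$. Also $\mathcal B$ is simple by Theorem~\ref{GDD}.

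The main point needing care is the well-definedness of $\mathcal D_{\langle y\rangle}$, independent of the chosen generator. This is exactly where the hypothesis that $H$ is an automorphism group is used, and it also drives the $G$-invariance. The counting itself is routine.
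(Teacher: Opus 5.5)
Your proof is correct and uses the same construction as the paper. Since $g(W_sY_1)=W_s\,g(Y_1)$ and $g(Y_1)$ runs over all nonzero vectors, the paper's $\bigcup_s (W_sY_1)^G$ (with $W_s$ ranging over $H$-orbit representatives of $\mathcal D$) is exactly your $\bigcup_{\langle y\rangle}\mathcal D_{\langle y\rangle}$, and your simplicity argument (group-blocks lie inside a group, GDD blocks meet every group in dimension at most $1$) matches the paper's. The only difference is that you check balance and $G$-invariance directly, by counting and $\GF(q^l)$-linearity, whereas the paper reads them off the $(\mathcal F_2^1,\mathcal F_k^1)$-submatrix of Lemma~\ref{fi1}(iii) and the zero blocks of $A_k$ via Kramer--Mesner; this makes your version slightly more self-contained.
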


\begin{proof}
	Suppose that \((\GF(q)^l,\mathcal{D})\) is a \(2\)-\((l,K,\lambda)_q\) design admitting
	\(H\) as an automorphism group.
	Let \(W_1,\dots,W_n\) be a complete set of representatives of the \(H\)-orbits on
	\(\mathcal{D}\). Juxtapose the $G$-incidence matrices $M_k$, $k \in K$, horizontally to obtain a new matrix $M$. By Lemma \ref{fi1} and the properties of $A_k$, we may choose the columns
	corresponding to \(W_1Y_1,\dots,W_nY_1\) from the corresponding blocks
	\(\mathcal{F}_k^1\) (\(k\in K\)) so that the sum of each row in $M$ indexed by every representative
	in \(\mathcal{F}_2^1\) is equal to \(\lambda\).
	
	On the other hand, by Theorem~\ref{GDD}, there exists a simple
	\((ml,l,k,\lambda)_q\)-GDD \((\GF(q)^{ml},\mathcal{G},\mathcal{B})\) with Desarguesian \(l\)-spread \(\mathcal G\) as group set and admitting
	\(\GL(m,q^l)\) as an automorphism group. Observe that the block set $\mathcal B\subseteq \Omega_{k}^{k-1}\cup \Omega_{k}^{k}$.
	Define
	\[
	\mathcal{A}:=\mathcal{B}\bigcup \left( \bigcup_{s=1}^{n}(W_{s}Y_1)^G\right) .
	\]
	For any $2\leq r\leq k-1$, the \((\mathcal{F}_2^1,\mathcal{F}_{k,r}^{k-1})\)-,
	\((\mathcal{F}_2^1,\mathcal{F}_k^k)\)-, and \((\mathcal{F}_2^2,\mathcal{F}_k^1)\)-submatrices
	of \(A_k\) are zeros, it follows from the Kramer–Mesner method that
	\((\GF(q)^{ml},\mathcal{A})\) is a
	$q$-PBD$(ml,K\cup \left\lbrace k \right\rbrace , \lambda )$ admitting
	\(G\) as an automorphism group.
	
	Finally, assume that the original $q$-PBD$(l,K,\lambda)$ $(V,\mathcal D)$ is simple. Since
	$W_1,\dots,W_n$ form a complete set of representatives of the $H$-orbits on $\mathcal D$,
	the $G$-orbits
	\[
	(W_1Y_1)^G,\dots,(W_nY_1)^G
	\]
	are pairwise distinct. Moreover, every block in $\bigcup_{s=1}^n (W_sY_1)^G$ is contained in some group of
	the Desarguesian spread $\mathcal G$, whereas every block of the $q$-GDD
	$(\GF(q)^{ml},\mathcal G,\mathcal B)$ meets each group of $\mathcal G$ in dimension at most $1$.
	Therefore, $\mathcal A$ contains no repeated blocks, and so the resulting
	$q$-PBD is simple.
\end{proof}

\begin{thm}\label{pbd1}
 For $q=2$ and any positive integer $m\geq 2$,	there exists a simple $q$-PBD$(ml,K,42s)$, if one of the following is satisfied:
	\begin{enumerate}
	\item[\rm(i)] $K=\left\lbrace 3,4\right\rbrace $,
	\begin{itemize}
	\item $l=7$ and $s=1,2,3$,
	\item $l=8$ and $1 \le s \le 17$,
	\item $l=9$ and $1 \le s \le 66$, $s\neq 33$;
	\end{itemize}
	\item[\rm(ii)] $K=\left\lbrace 3,4,5\right\rbrace $,
	\begin{itemize}
	\item $l=6$ and $s=1$,
	\item $l=7$ and $s=3,4,5,6$.
	\end{itemize}
	\end{enumerate}
\end{thm}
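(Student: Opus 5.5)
The plan is to obtain every case as an instance of Theorem~\ref{pbd} with $q=2$ and block dimension $k=3$. For $k=3$, Corollary~\ref{3456}(i) gives the $\lambda$-value of the simple $(ml,l,3,\lambda)_2$-GDD as
\[
\lambda=w_{2,1}(q^2+q+1)(q^3-q)=7\cdot 6\, w_{2,1}=42\,w_{2,1}.
\]
Here $w_{2,1}$ may range over $0\le w_{2,1}\le n_{3,1}^l=\lfloor (2^{l-1}-1)(2^{l-2}-1)/21\rfloor$. Taking $w=0$ and $w_{2,1}=s$ makes the GDD contribution exactly $42s$.

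The range condition on $s$ is harmless. The bound $n_{3,1}^l$ equals $93$ for $l=7$, $381$ for $l=8$ and $1542$ for $l=9$, and it is still positive for $l=6$. All the listed values of $s$ lie within these bounds. The hypothesis $3\le k\le\min\{m+1,l\}$ holds for every $m\ge 2$ and $l\ge 6$.

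The hypothesis of Theorem~\ref{pbd} then requires a simple $q$-PBD$(l,K',42s)$ admitting a Singer cycle $H$ of $\GF(2^l)$, with $K'\cup\{3\}=K$. For (i) I would take $K'=\{3,4\}$, or $K'=\{4\}$ where it suffices. For (ii) I would take $K'=\{3,4,5\}$ or $\{4,5\}$. Given such a design, Theorem~\ref{pbd} yields a simple $q$-PBD$(ml,K,42s)$ admitting $\GL(m,2^l)$, which proves the theorem.

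The core of the proof, and its main obstacle, is producing these small seed designs. I would run the Kramer--Mesner method (Theorem~\ref{KM1}) with $G=H$ on $\GF(2)^l$ for $l=6,7,8,9$. The matrix to use is the juxtaposition of the $H$-incidence matrices $M_{2,k'}$ for $k'\in K'$. One then searches for $0$-$1$ solutions $\boldsymbol x$ of $M\boldsymbol x=42s\,\boldsymbol 1$ for each listed $s$.

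Two features keep this tractable. First, the number of rows is only $n_2^l$, which is small (for instance $n_2^7=21$), so the systems are small integer programs. Second, solutions pair up: a solution for $\lambda$ gives one for the supplementary value, both within one dimension and by combining orbits. This should cover the long ranges such as $1\le s\le 66$ for $l=9$ from a modest list of explicit solutions. I would record the orbit representatives, each a subspace of $\GF(2^l)$ given by a basis of powers of a primitive element, in tables so that the solutions can be verified.

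The expected difficulty is the computational search itself. The gap $s\neq 33$ for $l=9$ presumably reflects a value where no $0$-$1$ solution exists, or none was found, so I would not try to force it. Once the seed designs are in hand, the deduction via Theorem~\ref{pbd} and Corollary~\ref{3456}(i) is immediate.
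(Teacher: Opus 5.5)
Your reduction is the paper's own: Theorem~\ref{pbd} with $k=3$, $w=0$ and $w_{2,1}=s$ from Corollary~\ref{3456}(i), which gives $\lambda=42s$. Your bounds $n_{3,1}^l=22,93,381,1542$ for $l=6,7,8,9$ and your check of $3\le k\le\min\{m+1,l\}$ are correct.

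What your proposal leaves open is the only non-formal input: an $H$-invariant simple $q$-PBD$(l,K,42s)$ for each listed $(l,s)$. A plan to run a Kramer--Mesner search does not establish these designs. The specific list in the statement, including the exclusion of $s=33$ for $l=9$, is dictated by which seeds are actually available. The paper does no search. It takes the seeds from \cite[Table~2]{PBD}, passing to the supplementary design for $(l,s)\in\{(7,3),(8,9),\dots,(8,16),(9,34),\dots,(9,66)\}$ when $K=\{3,4\}$, and for $(l,s)\in\{(6,1),(7,5),(7,6)\}$ when $K=\{3,4,5\}$. You need to cite these designs, or exhibit your solutions, to close the argument.

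Your remark that supplementation makes solutions ``pair up'' is also inaccurate. The supplement of a $2$-$(l,K,\lambda)_2$ design has $\lambda'=\sum_{k\in K}\left[ l-2\atop k-2\right]_2-\lambda$. This sum equals $186,714,2794$ for $K=\{3,4\}$ and $l=7,8,9$, and $65,341$ for $K=\{3,4,5\}$ and $l=6,7$.

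Only $714=42\cdot 17$ is a multiple of $42$. So only for $l=8$ does complementation send $42s$ to $42(17-s)$. In every other case it sends a multiple of $42$ to a non-multiple of $42$.

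Instead, one needs seeds with the small values $2794-42s$ for $34\le s\le 66$, $186-126=60$, $65-42=23$, and $341-42s$ for $s=5,6$. These are then complemented. This is exactly how the paper uses supplements: it lets you search at smaller $\lambda$, but it does not reduce the number of separate solutions required. So ``a modest list of explicit solutions'' will not cover $1\le s\le 66$ for $l=9$.

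Complementing ``within one dimension'' does not help either. It only makes sense if the PBD splits into single-dimension designs, and none of the relevant $\left[ l-2\atop k-2\right]_2$ is a multiple of $42$; for example, $127$ and $2667$ for $l=9$.
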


\begin{proof}
 In each of the stated cases, a simple $(ml,l,3,42s)_q$-GDD admitting $H$ as an automorphism group exists by Corollary~\ref{3456}(i), and $q$-PBD$(l,K,42s)$ design admitting $H$ as an automorphism group exists by \cite[Table~2]{PBD}, where we take the supplementary design if $K=\left\lbrace 3,4\right\rbrace $ and $(l,s)\in\left\lbrace (7,3),(8,9),(8,10),\dots,(8,16),(9,34),(9,35),\dots,(9,66)\right\rbrace $ or $K=\left\lbrace 3,4,5\right\rbrace $ and $(l,s)\in\left\lbrace (6,1),(7,5),(7,6)\right\rbrace $. Therefore, Theorem~\ref{pbd} yields the desired $q$-PBD$(ml,K,42s)$s.
\end{proof}

\subsection{Non-simple subspace $2$-designs}\label{section53}

In this subsection, we make use of $q$-GDDs constructed in Theorem \ref{GDD} to obtain subspace $2$-designs via Theorem \ref{aaabb}. Moreover, if one of the original $(v,m,k,\lambda)_q$-GDD $(V,\mathcal G,\mathcal B)$ and the $2$-design $(Z,\mathcal{D})$ is non-simple, then the resulting
$2$-$(v+n,k,q^{\,n(k-2)}\lambda)_q$ design is also non-simple. We also use a construction by breaking up blocks in a $q$-PBD as follows.

\begin{thm}\label{pbdd}
	Let $(V,\mathcal{B})$ be a $q$-PBD$(v,K,\lambda)$. Suppose that, for each
	$u\in K$, there exists a $2$-$(u,k,\mu)_q$ design. Then there exists a
	$2$-$(v,k,\lambda\mu)_q$ design.
\end{thm}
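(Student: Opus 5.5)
We use the standard "breaking up blocks" argument, transported to the subspace setting. For each block $B\in\mathcal{B}$, let $u=\dim B\in K$. Fix a $2$-$(u,k,\mu)_q$ design $(\GF(q)^u,\mathcal{D}_u)$ for each $u\in K$. Choose any $\GF(q)$-linear isomorphism $\varphi_B:\GF(q)^u\to B$, and put
\[
\mathcal{D}_B:=\{\varphi_B(D): D\in\mathcal{D}_u\},
\]
taken as a multiset. Since $\varphi_B$ is a linear isomorphism, it induces a bijection between the $2$-subspaces of $\GF(q)^u$ and those of $B$, and it preserves inclusion and dimension. Hence $(B,\mathcal{D}_B)$ is a $2$-$(u,k,\mu)_q$ design on $B$: every $2$-subspace of $B$ lies in exactly $\mu$ members of $\mathcal{D}_B$, each of which is a $k$-subspace of $B$ and therefore of $V$.

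Define the multiset union $\mathcal{C}:=\biguplus_{B\in\mathcal{B}}\mathcal{D}_B$, which respects the multiplicities of both $\mathcal{B}$ and each $\mathcal{D}_B$. We then double count. Fix a $2$-subspace $T\le V$. A block $C\in\mathcal{D}_B$ with $T\subseteq C$ automatically satisfies $T\subseteq B$, because $C\subseteq B$. So the number of members of $\mathcal{C}$ containing $T$ is
\[
\sum_{B\in\mathcal{B},\,T\subseteq B}\bigl|\{C\in\mathcal{D}_B: T\subseteq C\}\bigr|=\sum_{B\in\mathcal{B},\,T\subseteq B}\mu=\lambda\mu,
\]
where the last equality uses the $q$-PBD property that $T$ lies in exactly $\lambda$ blocks of $\mathcal{B}$. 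Therefore $(V,\mathcal{C})$ is a $2$-$(v,k,\lambda\mu)_q$ design.

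There is no genuine obstacle here. The only points needing care are the following. First, $k\le u$ is implicit for every $u\in K$, since a $2$-$(u,k,\mu)_q$ design is assumed to exist. Second, $\mathcal{C}$ must be treated as a multiset, because different blocks $B$ (or repeated copies of the same block) may produce the same $k$-subspace. For this reason the resulting design is in general non-simple, which is consistent with its intended use in this subsection.
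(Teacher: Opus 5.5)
Your proof is correct and is the same breaking-up-blocks argument as the paper: place a $2$-$(\dim B,k,\mu)_q$ design on each block $B$, take the multiset union, and count. You also make explicit two points the paper leaves implicit. The first is that a member of $\mathcal{D}_B$ containing $T$ forces $T\subseteq B$. The second is that the design on $B$ is transported via a linear isomorphism, which preserves containment of $2$-subspaces.
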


\begin{proof}
	For each block $B\in\mathcal{B}$ with $\dim(B)=u\in K$, construct a
	$2$-$(u,k,\mu)_q$ design on the $u$-subspace $B$, and denote its block set by
	$\mathcal{C}_B$. Define $\mathcal C$ to be the multiset union by
	\[
	\mathcal{C}:=\bigcup_{B\in\mathcal{B}} \mathcal{C}_B.
	\]
	We claim that $(V,\mathcal{C})$ is a $2$-$(v,k,\lambda\mu)_q$ design.
	
	Let $T$ be any $2$-subspace of $V$. Since $(V,\mathcal{B})$ is a
	$2$-$(v,K,\lambda)_q$ design, the subspace $T$ is contained in exactly
	$\lambda$ blocks of $\mathcal{B}$. For each such block $B$, the input
	$2$-$(\dim(B),k,\mu)_q$ design on $B$ contains $T$ in exactly $\mu$ of its
	$k$-dimensional blocks. Therefore, in the multiset union $\mathcal{C}$, the subspace
	$T$ is contained in exactly $\lambda\mu$ blocks. Hence $(V,\mathcal{C})$ is a
	$2$-$(v,k,\lambda\mu)_q$ design.
\end{proof}

\begin{thm}\label{sub2}
	For $q=2,3,4$ and positive integers $m,k$ with $m\geq k-1$,	there exist non-simple $2$-$(v,k,\lambda)_q$ designs, where the parameters are the following:
	\begin{enumerate}
		\item [\rm(i)] $(7m,3,126)_2$;
		\item [\rm(ii)] $(10m,5,155(2^{10m-6}-2^{14})(2^{10m}-2^{30}))_2$ with $m\equiv 2,3\pmod 4$;
		\item [\rm(iii)] $(10m+1,5,155(2^{10m-3}-2^{17})(2^{10m}-2^{30}))_2$ with $m\equiv 2,3,11,30\pmod {36}$;
		\item[\rm(iv)]$(11m,5,155(2^{11m-6}-2^{16})(2^{11m}-2^{33}))_2$ with $m\equiv 2,3,11,66\pmod {72}$;
		\item[\rm(v)]$(11m+1,5,155(2^{11m-3}-2^{19})(2^{11m}-2^{33}))_2$ with $m\equiv 2,3,83,282\pmod {360}$;
		\item [\rm(vi)] $(12m,5,155(2^{12m-6}-2^{18})(2^{12m}-2^{36})/7)_2$ with $m\equiv 2,3,8,12\pmod {15}$;
		\item [\rm(vii)] $(12m,6,93(2^{12m-11}-2^{13})(2^{12m}-2^{36})(2^{12m}-2^{48})/
		7)_2$ with $m\equiv 2,3,4\pmod 5$;
		\item [\rm(viii)] $(8m,4,520(3^{8m-2}-3^{14}))_3$ with $m\equiv 2\pmod 3$;
		\item [\rm(ix)] $(8m,4,7140(4^{8m-3}-4^{13}))_4$ with $m\equiv 2\pmod 3$;
		\item [\rm(x)] $(8m,4,4836(5^{8m-2}-5^{14}))_5$ with $m\equiv 2\pmod 3$.
	\end{enumerate}
\end{thm}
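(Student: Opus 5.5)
The plan is to derive every item from two earlier constructions. Item (i) uses the PBD-breaking construction, Theorem~\ref{pbdd}. Items (ii)--(x) use the ``filling in holes'' construction of Theorem~\ref{aaabb}, applied to the $q$-GDDs of Corollary~\ref{3456}. Non-simplicity will come for free, because in each case one ingredient is a trivial design in which every block is repeated at least twice.

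\emph{Item (i).} Take the simple $q$-PBD$(7m,\{3,4\},42)$ given by Theorem~\ref{pbd1}(i) with $l=7$, $s=1$ (this needs $m\ge 2=k-1$). I then break up the blocks as in Theorem~\ref{pbdd} with $k=3$ and $\mu=3$.
\begin{itemize}
\item On each $4$-block, put the complete design of all $3$-subspaces. This is a $2$-$(4,3,3)_2$ design, since a $2$-subspace of a $4$-space lies in $\left[{2\atop 1}\right]_2=3$ of its $3$-subspaces.
\item On each $3$-block, take the block itself three times, which is a $2$-$(3,3,3)_2$ design.
\end{itemize}
The resulting multiset is a $2$-$(7m,3,126)_2$ design. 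It is non-simple because each $3$-block of the PBD appears three times.

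\emph{Items (ii)--(x).} Apply Theorem~\ref{aaabb} with a $(ml,l,k,\lambda)_q$-GDD from Corollary~\ref{3456}(ii), with $n=0$ or $n=1$. The relevant choices are:
\begin{itemize}
\item $l=10,11,12$ and $k=5,6$ for $q=2$;
\item $l=8$, $k=4$ for $q=3,4,5$;
\item $n=1$ exactly in the cases $v=ml+1$.
\end{itemize}
The constraint $m\ge k-1$ is precisely the hypothesis $k\le m+1$ of Corollary~\ref{3456}.

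Since $n<2\le k$, the subspace $U$ contains no $k$-subspaces, so $\mathcal D_U=\emptyset$. Every statement about $2$-subspaces of $U$ is then vacuous, and the extra hypothesis on $U$ in Theorem~\ref{aaabb} holds automatically. It therefore only remains to produce a $2$-$(l+n,k,q^{n(k-2)}\lambda)_q$ design. For this I take every $k$-subspace of $\GF(q)^{l+n}$ with multiplicity $c$. This design has index $c\left[{l+n-2\atop k-2}\right]_q$, so I need
\[
c=\frac{q^{n(k-2)}\lambda}{\left[{l+n-2\atop k-2}\right]_q}
\]
to be a positive integer, and in fact $c\ge 2$ so that the design is non-simple.

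\emph{The parameter check.} First, substitute the explicit $\lambda$ of Corollary~\ref{3456}(ii), using $\gcd(3,l)=1$ or $3$, and simplify to the closed forms listed. For example, in (ii) one gets
\[
\frac{31\cdot 30\,(2^{10m}-2^{20})(2^{10m}-2^{30})}{(2-1)\cdot 24\cdot 16},
\]
which, after the extra factor $q^{n(k-2)}$ where $n=1$, matches $155(2^{10m-6}-2^{14})(2^{10m}-2^{30})$ times the Gaussian coefficient quotient. Then factor $\left[{l+n-2\atop k-2}\right]_q$ into primes; for instance $\left[{8\atop 3}\right]_2=\frac{(2^8-1)(2^7-1)(2^6-1)}{7\cdot 3}$. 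For each odd prime $p$ in this factorization, let $d$ be the multiplicative order of $q$ modulo $p$ (or modulo the needed prime power). The divisibility by $p$ of $q^{ml}-q^{jl}=q^{jl}(q^{(m-j)l}-1)$ then reduces to the condition $d\mid (m-j)l$ for one of the available $j$. Taking the least common multiple of these orders gives the moduli $4,36,72,360,15,5,3$. Combining the conditions over all primes by the Chinese remainder theorem should reproduce exactly the residue classes listed for $m$. Finally, $c\ge 2$ is clear, since $\lambda$ grows like $q^{2ml}$ while the Gaussian coefficient is a fixed constant.

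\emph{Main obstacle.} The main difficulty is this case-by-case divisibility bookkeeping. In particular, prime powers such as $3^2$ or $7^2$ that divide the Gaussian coefficient require orders modulo $p^2$, not just modulo $p$. I would handle this first by computing the relevant $p$-adic valuations via lifting the exponent, and only then assemble the congruence classes for $m$.
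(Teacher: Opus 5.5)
Your item (i) is exactly the paper's argument. For (ii)--(x) you also use the paper's framework: Theorem~\ref{aaabb} with $n\in\{0,1\}$ applied to the GDDs of Corollary~\ref{3456}(ii), and your observation that the condition on $U$ is vacuous is fine.

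The gap is the choice of filler design. With the complete design you need $\left[{l+n-2\atop k-2}\right]_q \mid q^{n(k-2)}\lambda$. This Gaussian coefficient contains primes that force congruences on $m$ beyond the stated ones, so the final Chinese-remainder step cannot ``reproduce exactly'' the listed classes.

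Concretely, in (ii) (an $n=0$ case, not $n=1$ as in your sample computation) we have $\left[{8\atop 3}\right]_2=97155=3^2\cdot 5\cdot 17\cdot 127$ and $\lambda=155\cdot 2^{44}(2^{10(m-2)}-1)(2^{10(m-3)}-1)$. Divisibility by $127=2^7-1$ requires $7\mid m-2$ or $7\mid m-3$. So $m=6$, which is allowed since $m\equiv 2\pmod 4$, gives a non-integral $c$.

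In (viii), $\left[{6\atop 2}\right]_3=11011=7\cdot 11^2\cdot 13$ and $\lambda=520\cdot 3^{14}(3^{8(m-2)}-1)$. Since $3$ has order $5$ modulo $11$ (and modulo $121$), you would need $m\equiv 2\pmod{15}$, so $m=5$ fails.

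The same thing happens in (iii), (iv) and (vii) with the extra prime $127$, in (ix) with $11$ and $31$, and in (x) with $11$ and $71$. Only (v) and (vi) go through your way, because there the paper's filler index is exactly $\left[{10\atop 3}\right]_2=6347715$.

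The missing ingredient is a supply of nontrivial seed designs with smaller index. The paper takes known designs from the literature (computer constructions):
\[
2\text{-}(10,5,765)_2,\ 2\text{-}(11,5,43435)_2,\ 2\text{-}(12,5,6347715)_2,\ 2\text{-}(12,6,2962267)_2,
\]
\[
2\text{-}(8,4,455)_3,\ 2\text{-}(8,4,5733)_4,\ 2\text{-}(8,4,20181)_5.
\]
It takes $x=q^{n(k-2)}\mu_1/\mu_2$ copies of the appropriate one, which also gives non-simplicity. So only $\mu_2\mid q^{n(k-2)}\mu_1$ is needed.

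For (ii), $765=3^2\cdot 5\cdot 17$, and the only real constraint is $17\mid (2^{10(m-2)}-1)(2^{10(m-3)}-1)$, i.e.\ $m\equiv 2,3\pmod 4$. For (viii), $455=5\cdot 7\cdot 13$ gives exactly $m\equiv 2\pmod 3$.

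Without these seed designs, your argument proves (ii)--(iv) and (vii)--(x) only on proper subsets of the stated residue classes.
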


\begin{proof}
First, note that there exist a trivial (non-simple) $2$-$(3,3,3)_2$ design and a trivial
$2$-$(4,3,3)_2$ design. Applying Theorem~\ref{pbdd} to the $q$-PBD$(7m,\left\lbrace 3,4\right\rbrace,42 )$ given in
{\rm(i)} of Theorem~\ref{pbd1}, we obtain
non-simple $2$-$(7m,3,126)_2$ design.

	Next, we apply Theorem~\ref{aaabb} with $k\in \left\lbrace 4,5,6\right\rbrace $ and $n\in\left\lbrace 0,1\right\rbrace $. Suppose that an $(ml,l,k,\mu_1)_q$-GDD exist. If a $2$-$(l+n,k,\mu_2)_q$ design exists and there is a positive integer $x$ such that $x\mu_2=q^{n(k-2)}\mu_1$, then a $2$-$(ml+n,k,q^{n(k-2)}\mu_1)_q$ design also exist. After computation, we conclude that \(\mu_2\mid q^{n(k-2)}\mu_1\) if \(m\) satisfies the stated conditions. We illustrate the proof in Table \ref{Table1}, showing the sources of subspace $2$-designs that we require. Note that all the required $(ml,l,k,\mu_1)_q$-GDDs exist by Corollary \ref{3456}$\rm(ii)$.
	\begin{table*}[!htp]
		\centering
		\caption{Applications of Theorem \ref{aaabb}}\medskip
		\label{Table1}
		\scalebox{0.85}{
			\begin{tabular}{c|c|c|c|c|c|c|c}
				\hline  $q$&$l$&$k$ & $\mu_1$  & $n$ &$\mu_2$ &$2$-$(l+n,k,\mu_2)_q$ design   & Result \\
				\hline
				2&10&5 & $155(2^{10m-6}-2^{14})(2^{10m}-2^{30})$    & 0 & 765 &{\rm{\cite[Table 1]{tabble1}}} & \rm(ii)\\
				2&10&5 & $155(2^{10m-6}-2^{14})(2^{10m}-2^{30})$   & 1 & 43435 &{\rm{\cite[Table 1]{tabble1}}} & \rm(iii)\\
				2&11&5 & $155(2^{11m-6}-2^{16})(2^{11m}-2^{33})$   & 0 & 43435 &{\rm{\cite[Table 1]{tabble1}}} & \rm(iv)\\
				2&11&5 & $155(2^{11m-6}-2^{16})(2^{11m}-2^{33})$   & 1 & 6347715 &{\rm{\cite[Table 1]{tabble1}}} & \rm(v)\\
				2&12&5 & $155(2^{12m-6}-2^{18})(2^{12m}-2^{36})/7$   & 0 & 6347715 &{\rm{\cite[Table 1]{tabble1}}} & \rm(vi)\\
				2&12&6 & $93(2^{12m-11}-2^{13})(2^{12m}-2^{36})(2^{12m}-2^{48})/7$   & 0 & 2962267 &{\rm{\cite[Table 1]{tabble1}}} & \rm(vii)\\
				3&8&4 & $520(3^{8m-2}-3^{14})$   & 0 & 455 &{\rm{\cite[Table 2]{tabble1}}} & \rm(viii)\\
				4&8&4 & $7140(4^{8m-3}-4^{13})$   & 0 & 5733 &{\rm{\cite[Table 3]{tabble1}}} & \rm(ix)\\
				5&8&4 & $4836(5^{8m-2}-5^{14})$  & 0 & 20181 &{\rm{\cite[Table 4]{tabble1}}} & \rm(x)\\
				 \hline
		\end{tabular} }
	\end{table*}
\end{proof}

\section{Summary and concluding remarks}\label{section6}
	
	This paper focuses on constructions for simple $q$-analogs of group divisible designs and their applications.
	We investigate the \(\GL(m,q^l)\)-incidence matrix between \(2\)-subspaces and \(k\)-subspaces
	of \(\GF(q)^{ml}\). Using this incidence matrix, we
	construct new \(q\)-analogs of group divisible designs. More specifically, the main contributions of this paper are as follows.
	
	\begin{itemize}
		\item We present many infinite families for simple $q$-analogs of group divisible designs with arbitrary block dimension (Theorem \ref{GDD}).
		
		\item We obtain a recursive construction of $q$-PBDs (Theorems \ref{pbd}) and several new infinite families of simple $q$-PBDs (Theorems \ref{pbd1}).
		
		\item We construct many new infinite families of non-simple subspace $2$-designs with block dimension $4$, $5$ and $6$ (Theorem \ref{sub2}).
		
	\end{itemize}
	
The results obtained in this paper naturally lead to several questions for future study.
A more refined analysis of the \(G\)-orbits on \(\Omega_k^j\) $(2\leq j\leq k)$ may produce further recursive
constructions of \(q\)-GDDs and subspace $2$-designs, particularly for larger block dimensions. It is also of interest
to construct additional small ``seed'' designs with suitable parameters, since such designs are
crucial for extending the scope of the recursive approach developed here. Finally, it would be worthwhile to explore whether the new \(q\)-GDDs obtained in this paper can be used to construct further infinite families of \(q\)-PBDs and subspace \(2\)-designs.

\end{document}